\theoremstyle{theorem}
\newtheorem{theorem}{Theorem}
\newtheorem{lemma}[theorem]{Lemma}
\newtheorem{proposition}[theorem]{Proposition}
\newtheorem{conjecture}{Conjecture}
\theoremstyle{definition}
\title{Ergodic properties of matrix equilibrium states}
\begin{document}
\author{Ian D. Morris}
\begin{abstract}Given a finite irreducible set of real $d \times d$ matrices $A_1,\ldots,A_M$ and a real parameter $s>0$, there exists a unique shift-invariant equilibrium state on $\{1,\ldots,M\}^{\mathbb{N}}$ associated to $(A_1,\ldots,A_M,s)$. In this article we characterise the ergodic properties of such equilibrium states in terms of the algebraic properties of the semigroup generated by the associated matrices. We completely characterise when the equilibrium state has zero entropy, when it gives distinct Lyapunov exponents to the natural cocycle generated by $A_1,\ldots,A_M$, and when it is a Bernoulli measure. We also give a general sufficient condition for the equilibrium state to be mixing, and give an example where the equilibrium state is ergodic but not totally ergodic. Connections with a class of measures investigated by S. Kusuoka are explored in an appendix.\end{abstract}
\maketitle
\section{Introduction}
For each integer $M\geq 2$ let $\Sigma_M:=\{1,\ldots,M\}^{\mathbb{N}}$ with the infinite product topology, let $\sigma \colon \Sigma_M \to \Sigma_M$ denote the shift transformation $\sigma[(x_n)_{n=1}^\infty]:=(x_{n+1})_{n=1}^\infty$, and when $M$ is understood let $\mathcal{M}_\sigma $ denote the set of all $\sigma$-invariant Borel probabiltiy measures on $\Sigma_M$. If $x_1,\ldots,x_m \in \{1,\ldots,M\}$ are given, we define the \emph{cylinder set}
\[[x_1x_2\cdots x_m]:=\left\{y=(y_n)_{n=1}^\infty\in \Sigma_M \colon y_i=x_i\text{ for all }i=1,\ldots,m\right\}\]
which is both closed and open. It is by now classical that if $f \colon \Sigma_M \to \mathbb{R}$ is suitably regular -- for example, if
\begin{equation}\label{eq:holder}\sup\{|f(x)-f(y)|\colon x,y\in \Sigma_M\text{ and }x_1=y_1,x_2=y_2,\ldots,x_n=y_n\}=O(e^{-\gamma n})\end{equation}
for some $\gamma>0$ -- then there exists a unique measure $\mu \in \mathcal{M}_\sigma$, called the \emph{equilibrium state} of $f$, such that
\[h(f)+\int f\,d\mu = \sup_{\nu \in \mathcal{M}_\sigma } h(\nu)+\int f\,d\nu\]
where $h(\nu)$ denotes the metric entropy of the measure $\nu$ with respect to the transformation $\sigma$. These equilibrium states enjoy numerous regularity properties: they are fully supported on $\Sigma_M$, have the Bernoulli property, and satisfy the following \emph{Gibbs inequality}: for some constant $C>0$ depending only on $f$, the equilibrium measure $\mu$ of $f$ satisfies
\[C^{-1}\leq \frac{\mu([x_1\cdots x_n])}{e^{-nP(f)+\sum_{i=0}^{n-1}f(\sigma^ix)} }\leq C\]
for every $x \in \Sigma_M$ and $n\geq 1$, where $P(f)=h(\mu)+\int f\,d\mu$. Using the Gibbs property together with the Liv\v{s}ic periodic point criterion, one may also show that $f$ and $g$ as above have the same equilibrium state if and only if $f=g+h \circ \sigma - h$ for some continuous function $h$ which also satisfies \eqref{eq:holder}. The applications of the theory of these Gibbs equilibrium states are too broad and deep to even attempt to summarise here; we refer the reader only to the classic monograph \cite{PaPo90}, in which the statements made above are proved\footnote{The exception to this assertion is the Bernoulli property, which is proved in \cite{Bo74}.}.

Recently a theory has begun to emerge of Gibbs equilibrium states associated to matrix-valued, rather than real-valued, potentials. This theory is particularly motivated by its applications to multifractal analysis (\cite{Fe03,Fe09,FeLa02,Ka04}) and the study of self-affine fractals (see e.g. \cite{FeSh14,MoSh16}). Let us briefly recall some definitions. If $A_1,\ldots,A_M$ belong to the set of $d\times d$ real matrices --  which we denote by $M_d(\mathbb{R})$ -- then we define the \emph{top Lyapunov exponent} of $\mathsf{A}:=(A_1,\ldots,A_M)$ with respect to a measure $\mu \in \mathcal{M}_\sigma $ by
\[\Lambda(\mathsf{A},\mu):=\lim_{n\to\infty} \frac{1}{n}\int_{\Sigma_M}\log \|A_{x_1}\cdots A_{x_n}\|d\mu(x)=\inf_{n \geq 1}\frac{1}{n}\int_{\Sigma_M}\log \|A_{x_1}\cdots A_{x_n}\|d\mu(x).\]
Here and throughout this article, $\|\cdot\|$ denotes the Euclidean norm on $\mathbb{R}^d$ and the operator norm on $M_d(\mathbb{R})$ derived therefrom. We remark that the limit $\Lambda(\mathsf{A},\mu)$ is independent of the choice of norm used on $\mathbb{R}^d$, and in particular is unaffected by change-of-basis transformations; moreover, if $\mu$ is ergodic, then by the subadditive ergodic theorem
\[\lim_{n\to\infty} \frac{1}{n}\log \|A_{x_n}\cdots A_{x_1}\|=\Lambda(\mathsf{A},\mu)\]
for $\mu$-almost-every $x \in \Sigma_M$. 

We say that $\mathsf{A}=(A_1,\ldots,A_M)\in M_d(\mathbb{R})^M$ is \emph{irreducible} if there does not exist a linear subspace $U$ of $\mathbb{R}^d$ such that $A_iU\subseteq U$ for every $i=1,\ldots,M$ and additionally $0<\dim U <d$; otherwise we call $\mathsf{A}$ \emph{reducible}. Clearly these definitions may also be applied to sets of matrices. If $\mathsf{A}$ is reducible then we may change basis in such a way that every $A_i$ becomes block upper triangular, with the upper-left block corresponding to the action of $A_i$ on the nontrivial invariant subspace. By inductive descent it follows that if $\mathsf{A}$ is reducible then there exist an integer $k>1$ and an invertible matrix $B \in M_d(\mathbb{R})$ such that we may write
\begin{equation}\label{eq:red}B^{-1}A_iB = \left(\begin{array}{ccccc}
A_i^{(1,1)} & A_i^{(1,2)} & A_i^{(1,3)} & \cdots &A_i^{(1,k)}\\
0 & A_i^{(2,2)} & A_i^{(2,3)} & \cdots &A_i^{(2,k)}\\
0 & 0 & A_i^{(3,3)} & \cdots &A_i^{(3,k)}\\
\vdots & \vdots & \vdots  & \ddots &\vdots \\
0 & 0 & 0  & \cdots &A_i^{(k,k)}
\end{array}\right)\end{equation}
for every $i=1,\ldots,M$, where each of the $M$-tuples $\mathsf{A}^{(i)}:=(A_1^{(i,i)},A_2^{(i,i)},\ldots,A_M^{(i,i)})$ is irreducible. If $\rho(B)$ denotes the spectral radius of the matrix $B$, the reader may easily verify that in the above situation $\rho(A_{x_n}\cdots A_{x_1})=\max_{1 \leq i \leq k}\rho(A^{(i,i)}_{x_n}\cdots A^{(i,i)}_{x_1})$ for every $x_1,\ldots,x_n \in \{1,\ldots,M\}$.

 We denote the semigroup generated by the matrices $A_1,\ldots,A_M$ by $\mathcal{S}(\mathsf{A})$. Given $\mathsf{A}=(A_1,\ldots,A_M)\in M_d(\mathbb{R})^M$ and $s>0$ we define the pressure $P(\mathsf{A},s) \in [-\infty,+\infty)$ by
\begin{equation}\label{eq:pressure}P(\mathsf{A},s):=\lim_{n\to\infty} \frac{1}{n}\log\left(\sum_{i_1,\ldots,i_n=1}^M \left\|A_{x_n}\cdots A_{x_1}\right\|^s\right).\end{equation}
We observe that this limit exists by subadditivity. If $\mathsf{A}$ is irreducible and $d>1$ then $P(\mathsf{A},s)>-\infty$. 
The following corollary of the subadditive variational principle (\cite{CaFeHu08}, for an earlier result in the invertible case see \cite{Ka04}) relates $P(\mathsf{A},s)$ to the top Lyapunov exponents of $\mathsf{A}$ with respect to the various $\sigma$-invariant measures:
\begin{theorem}\label{th:subvar}
Let $\mathsf{A}=(A_1,\ldots,A_M) \in M_d(\mathbb{R})^M$ and $s>0$. Then
\[P(\mathsf{A},s)=\sup_{\mu \in \mathcal{M}_\sigma } h(\mu)+s\Lambda(\mathsf{A},\mu).\]
\end{theorem}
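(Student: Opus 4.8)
The plan is to deduce the statement from the subadditive variational principle of \cite{CaFeHu08}, applied to the full shift $(\Sigma_M,\sigma)$ and the sequence of potentials $\Phi:=(\varphi_n)_{n\ge1}$ given by $\varphi_n(x):=s\log\|A_{x_n}\cdots A_{x_1}\|$. Submultiplicativity of the operator norm gives $\|A_{x_{n+m}}\cdots A_{x_1}\|\le\|A_{x_{n+m}}\cdots A_{x_{n+1}}\|\,\|A_{x_n}\cdots A_{x_1}\|$, hence $\varphi_{n+m}(x)\le\varphi_n(x)+\varphi_m(\sigma^n x)$, so $\Phi$ is subadditive; and directly from the definition of $\Lambda(\mathsf{A},\mu)$ we have $\lim_{n\to\infty}\tfrac1n\int\varphi_n\,d\mu=s\Lambda(\mathsf{A},\mu)$ for every $\mu\in\mathcal{M}_\sigma$ (the order of multiplication being immaterial for this limit, by subadditivity). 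It therefore remains only to identify the subadditive topological pressure of $\Phi$ with $P(\mathsf{A},s)$ as defined in \eqref{eq:pressure}.

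For that, note that $\varphi_n$ is constant on each cylinder $[x_1\cdots x_n]$, with value $s\log\|A_{x_n}\cdots A_{x_1}\|$ there, and that the partition $\mathcal{P}:=\{[1],\dots,[M]\}$ is a topological generator made of clopen sets. Comparing $(n,\epsilon)$-separated sets, for $\epsilon$ smaller than the least distance between distinct one-cylinders, with the partition of $\Sigma_M$ into $n$-cylinders shows that $\sup\{\sum_{x\in E}e^{\varphi_n(x)}:E\ (n,\epsilon)\text{-separated}\}$ agrees, up to multiplicative factors bounded independently of $n$, with $\sum_{i_1,\dots,i_n=1}^M\|A_{x_n}\cdots A_{x_1}\|^s$; taking $\tfrac1n\log$ and letting $n\to\infty$ identifies the subadditive topological pressure of $\Phi$ with $P(\mathsf{A},s)$. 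The subadditive variational principle then yields exactly $P(\mathsf{A},s)=\sup_{\mu\in\mathcal{M}_\sigma}h(\mu)+s\Lambda(\mathsf{A},\mu)$.

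The one point requiring real care is that the variational principle in the form cited demands the $\varphi_n$ be finite and continuous, which fails precisely when some product $A_{x_n}\cdots A_{x_1}$ is the zero matrix. I would handle this by passing to the subsystem $\Sigma^{\circ}:=\{x\in\Sigma_M:A_{x_m}\cdots A_{x_n}\ne0\text{ for all }n\le m\}$: it is closed and $\sigma$-invariant, it is non-empty whenever $P(\mathsf{A},s)>-\infty$ (the words indexing non-zero products form a language closed under passing to factors and containing words of every length, so a standard compactness argument produces such an $x$), and on it the $\varphi_n$ are finite and continuous. One then checks that $P(\mathsf{A},s)$ equals the subadditive topological pressure of the restriction of $\Phi$ to $\Sigma^{\circ}$, and that every $\mu\in\mathcal{M}_\sigma$ with $\mu(\Sigma^{\circ})<1$ has $\Lambda(\mathsf{A},\mu)=-\infty$ and hence is irrelevant to the supremum; the variational principle applied on $\Sigma^{\circ}$ then completes the argument. (Alternatively one may regularise, replacing $\varphi_n$ by $s\log(\|A_{x_n}\cdots A_{x_1}\|+\epsilon^n)$ --- still subadditive, now continuous --- and let $\epsilon\downarrow0$, invoking weak-$*$ compactness of $\mathcal{M}_\sigma$ and upper semicontinuity of $h(\cdot)$ and of $\Lambda(\mathsf{A},\cdot)$.) When $P(\mathsf{A},s)=-\infty$ there is nothing to prove, since the elementary inequality $h(\mu)+s\Lambda(\mathsf{A},\mu)\le\tfrac1n\log\sum_{i_1,\dots,i_n=1}^M\|A_{x_n}\cdots A_{x_1}\|^s$ then forces the supremum to be $-\infty$ too; this inequality follows from $h(\mu)\le\tfrac1nH_\mu(\bigvee_{i=0}^{n-1}\sigma^{-i}\mathcal{P})$, from $\Lambda(\mathsf{A},\mu)\le\tfrac1n\int\log\|A_{x_n}\cdots A_{x_1}\|\,d\mu$, and from the bound $\sum_j p_j\log(a_j/p_j)\le\log\sum_j a_j$ --- valid for any probability vector $(p_j)$ and reals $a_j\ge0$ --- applied with $a_j=\|A_{x_n}\cdots A_{x_1}\|^s$ and $p_j=\mu([x_1\cdots x_n])$.

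That elementary inequality already gives ``$\ge$'' in the theorem unconditionally and by soft means, so the whole of the theorem's content lies in the reverse inequality, which is exactly what \cite{CaFeHu08} provides. Accordingly the only genuinely non-routine step in the plan above is the treatment of vanishing products; the deeper difficulty --- that, because the operator norm is submultiplicative rather than supermultiplicative, one cannot in general manufacture a near-optimal invariant measure by making successive length-$n$ blocks independent, since concatenation can wipe out almost all of the norm of a long product --- is precisely what the cited subadditive variational principle handles for us, and I would not attempt to reprove it here.
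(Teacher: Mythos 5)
Your proposal takes essentially the same route as the paper, which offers no independent proof of Theorem \ref{th:subvar} but presents it precisely as a corollary of the sub-additive variational principle of \cite{CaFeHu08}, applied to the full shift with the sub-additive sequence $\varphi_n(x)=s\log\|A_{x_n}\cdots A_{x_1}\|$; your identification of the sub-additive topological pressure with \eqref{eq:pressure} via $n$-cylinders is the routine step for locally constant potentials, and your ``$\geq$'' inequality is fine. The one substantive comment concerns your treatment of vanishing products: the precaution is unnecessary, because the variational principle in \cite{CaFeHu08} is stated for sub-additive sequences $\{\log f_n\}$ with $f_n\geq 0$ merely continuous, so the value $-\infty$ is permitted --- this is exactly why the theorem can be stated, as here, for arbitrary tuples with no irreducibility or invertibility hypothesis. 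If you do insist on the detour through $\Sigma^{\circ}$, the assertion that $P(\mathsf{A},s)$ equals the sub-additive pressure of the restriction to $\Sigma^{\circ}$ is the one point you have not actually justified, and it is not immediate (a priori the sum in \eqref{eq:pressure} could be dominated by words admitting only finite right-extensions); it is true, and can be proved by cutting each word of length $kn$ into $k$ blocks of length $n$ and using that, for fixed $n$, the finite sets of length-$n$ words extendable to the right by $m$ symbols stabilise as $m\to\infty$ to the language of $\Sigma^{\circ}$, so that letting $k\to\infty$ gives $P(\mathsf{A},s)\leq\frac{1}{n}\log\sum\|A_{w_n}\cdots A_{w_1}\|^s$ with the sum over length-$n$ words of that language. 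Alternatively, your parenthetical regularisation by $\|A_{x_n}\cdots A_{x_1}\|+\epsilon^n$ is sound and sidesteps the issue; either way the argument is correct, and the cleanest course is simply to quote \cite{CaFeHu08} in its stated generality, as the paper does.
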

Let us say that $\mu$ is an \emph{equilibrium state} for $(\mathsf{A},s)$ if it attains the above supremum. We shall refer to such measures generically as \emph{matrix equilibrium states}. D.-J. Feng and A. K\"aenm\"aki \cite{FeKa11} have established the following description of the set of equilibrium states of $(\mathsf{A},s)$: 
\begin{theorem}[Feng-K\"aenm\"aki]\label{th:Gibbs}
Let $\mathsf{A}=(A_1,\ldots,A_M) \in M_d(\mathbb{R})^M$ with $M,d \geq 2$, and let $s>0$. If $\mathsf{A}$ is irreducible, then there exists a unique equilibrium state $\mu \in \mathcal{M}_\sigma $ of $(\mathsf{A},s)$. Furthermore there exists a constant $C>0$ such that for every $x_1,\ldots,x_n \in \{1,\ldots,M\}$
\begin{equation}\label{eq:Gibbs}C^{-1}\mu([x_1\cdots x_n])\leq \frac{\|A_{x_n}\cdots A_{x_1}\|^s}{e^{nP(A,s)}} \leq C\mu([x_1\cdots x_n]).\end{equation}
If $\mathsf{A}$ is not irreducible, but nonetheless $P(\mathsf{A},s)>-\infty$, let each $A_i$ be written in the form \eqref{eq:red}, and let $\mathsf{A}^{(i)}=(A_1^{(i,i)},\ldots,A_M^{(i,i)})$ for each $i=1,\ldots,\ell$. Then the set of equilibrium states of $(\mathsf{A},s)$ is nonempty, and is precisely the convex hull of the set of equlibrium states of $(\mathsf{A}^{(i)},s)$ for integers $i$ such that $P(\mathsf{A}^{(i)},s)=P(\mathsf{A},s)$.
\end{theorem}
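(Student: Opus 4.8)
\emph{Step 1: quasi-multiplicativity (the irreducible case).} Throughout, for a finite word $\mathbf{i}=i_1\cdots i_n$ I write $A_{\mathbf{i}}:=A_{i_n}\cdots A_{i_1}$, put $\Phi_n:=\sum_{|\mathbf{i}|=n}\|A_{\mathbf{i}}\|^s$, and let $\mathcal{P}^n$ be the partition of $\Sigma_M$ into length-$n$ cylinders. Submultiplicativity of $\|\cdot\|^s$ gives $\Phi_{n+m}\leq\Phi_n\Phi_m$, whence $P(\mathsf{A},s)=\inf_n\frac1n\log\Phi_n$ and $\Phi_n\geq e^{nP(\mathsf{A},s)}$; the crucial point is that irreducibility forces a matching upper bound. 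First I would prove the quasi-multiplicativity estimate: there are a finite set $F$ of words and $c>0$ such that for every pair of words $\mathbf{i},\mathbf{j}$ one can choose $\mathbf{k}\in F$ with $\|A_{\mathbf{i}}A_{\mathbf{k}}A_{\mathbf{j}}\|\geq c\|A_{\mathbf{i}}\|\,\|A_{\mathbf{j}}\|$. From the singular value decomposition, each nonzero matrix $A$ admits unit vectors $u_A,v_A$ with $\|Ax\|\geq\|A\|\,|\langle v_A,x\rangle|$ and $\|A^{\top}y\|\geq\|A\|\,|\langle u_A,y\rangle|$, so $\|A_{\mathbf{i}}A_{\mathbf{k}}A_{\mathbf{j}}\|\geq\|A_{\mathbf{i}}\|\,\|A_{\mathbf{j}}\|\,|\langle v_{A_{\mathbf{i}}},A_{\mathbf{k}}u_{A_{\mathbf{j}}}\rangle|$, and it suffices to bound $\max_{\mathbf{k}\in F}|\langle\phi,A_{\mathbf{k}}w\rangle|$ below uniformly over unit vectors $\phi,w$. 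Here irreducibility enters decisively: for any nonzero $w$ the span of $\{A_{\mathbf{k}}w\}$ over all words $\mathbf{k}$ is an $\mathsf{A}$-invariant subspace containing $w$, hence is all of $\mathbb{R}^d$, so some word gives $\langle\phi,A_{\mathbf{k}}w\rangle\neq0$; a compactness argument on $S^{d-1}\times S^{d-1}$ then produces a single finite $F$ and a uniform $c>0$. Substituting into the definition of $\Phi_n$ (and absorbing the bounded overcounting caused by the varying lengths of the words in $F$) yields $\Phi_{n+m}\geq c_*\Phi_n\Phi_m$ for some $c_*>0$, which together with submultiplicativity forces $e^{nP(\mathsf{A},s)}\leq\Phi_n\leq c_*^{-1}e^{nP(\mathsf{A},s)}$ for every $n$. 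I expect this to be the main obstacle: it is the one place where $d\geq2$ and irreducibility are genuinely used, and the lower bound in the next step rests on it.

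\emph{Step 2: construction of the Gibbs state.} I would then let $\nu_n$ be the probability measure giving mass $\|A_{\mathbf{i}}\|^s/\Phi_n$ to the periodic point $\overline{\mathbf{i}}$ for each $|\mathbf{i}|=n$, put $m_N:=\frac1N\sum_{k=0}^{N-1}\sigma_*^k\nu_N$, and take $\mu$ to be a weak-$*$ accumulation point of $(m_N)_N$; a Krylov--Bogolyubov argument gives $\mu\in\mathcal{M}_\sigma$. To verify \eqref{eq:Gibbs}, one estimates $m_N(\sigma^{-k}[\mathbf{i}])$ by splitting each length-$N$ word at the slot occupied by $\mathbf{i}$: submultiplicativity together with $\Phi_\ell\asymp e^{\ell P(\mathsf{A},s)}$ from Step 1 gives the upper bound $\mu([\mathbf{i}])\leq C\|A_{\mathbf{i}}\|^se^{-nP(\mathsf{A},s)}$, while inserting the connecting words from $F$ supplied by quasi-multiplicativity (which prevents cancellation) gives, after a somewhat fiddly count, the matching lower bound for a positive proportion of the shifts $k$, hence $\mu([\mathbf{i}])\geq c\|A_{\mathbf{i}}\|^se^{-nP(\mathsf{A},s)}$. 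Granting \eqref{eq:Gibbs}, a direct computation of $\frac1n H(\mu,\mathcal{P}^n)=-\frac1n\sum_{|\mathbf{i}|=n}\mu([\mathbf{i}])\log\mu([\mathbf{i}])$ shows $h(\mu)+s\Lambda(\mathsf{A},\mu)=P(\mathsf{A},s)$, so $\mu$ is an equilibrium state; and \eqref{eq:Gibbs} combined with quasi-multiplicativity gives the $\psi$-mixing estimate $\mu([\mathbf{i}]\cap\sigma^{-(|\mathbf{i}|+r)}[\mathbf{j}])\asymp\mu([\mathbf{i}])\mu([\mathbf{j}])$ uniformly in $\mathbf{i},\mathbf{j}$ and all large $r$, which (applied to a $\sigma$-invariant set $E$ and its complement) shows in particular that $\mu$ is ergodic.

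\emph{Step 3: uniqueness.} Let $\nu$ be an arbitrary equilibrium state of $(\mathsf{A},s)$. Using $h(\nu)\leq\frac1n H(\nu,\mathcal{P}^n)$, $\Lambda(\mathsf{A},\nu)\leq\frac1n\int\log\|A_{x_n}\cdots A_{x_1}\|\,d\nu$, concavity of $\log$, and Step 1, I would obtain
\[
P(\mathsf{A},s)\ \leq\ \frac1n\sum_{|\mathbf{i}|=n}\nu([\mathbf{i}])\log\frac{\|A_{\mathbf{i}}\|^s}{\nu([\mathbf{i}])}\ \leq\ \frac1n\log\Phi_n\ \leq\ P(\mathsf{A},s)+\frac{\log c_*^{-1}}{n},
\]
and, replacing $\|A_{\mathbf{i}}\|^se^{-nP(\mathsf{A},s)}$ by $\mu([\mathbf{i}])$ via \eqref{eq:Gibbs}, this rearranges to a uniform bound $\sup_n D\big(\nu|_{\mathcal{P}^n}\,\big\|\,\mu|_{\mathcal{P}^n}\big)<\infty$ on the relative entropies, where $\mu$ is the measure of Step 2. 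Then the densities $f_n:=\nu([x_1\cdots x_n])/\mu([x_1\cdots x_n])$ form a nonnegative martingale on $(\Sigma_M,\mu)$ with respect to $(\mathcal{P}^n)_n$ satisfying $\sup_n\int f_n\log^+f_n\,d\mu<\infty$, so they are uniformly integrable by the de la Vall\'ee-Poussin criterion; hence $f_n\to f$ in $L^1(\mu)$, $\nu([\mathbf{i}])=\int_{[\mathbf{i}]}f\,d\mu$ for every word $\mathbf{i}$, and therefore $\nu=f\,d\mu$ is absolutely continuous with respect to $\mu$. Since $\mu$ is ergodic and $\nu$ is $\sigma$-invariant and absolutely continuous with respect to $\mu$, comparing Birkhoff averages of bounded functions forces $\nu=\mu$. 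This proves the first assertion.

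\emph{Step 4: the reducible case.} Write each $A_i$ as in \eqref{eq:red}, with irreducible diagonal blocks $\mathsf{A}^{(1)},\dots,\mathsf{A}^{(k)}$, and set $P_c:=P(\mathsf{A}^{(c)},s)$. Up to a fixed change of basis the $c$th diagonal block of $A_{\mathbf{i}}$ is a subquotient of $A_{\mathbf{i}}$, so $\|A^{(c,c)}_{\mathbf{i}}\|\leq C\|A_{\mathbf{i}}\|$ and hence $P(\mathsf{A},s)\geq\max_c P_c$; conversely each off-diagonal block of $A_{\mathbf{i}}$ is a sum of at most polynomially-in-$|\mathbf{i}|$ many products of diagonal blocks glued by single transition matrices, so the estimate $\Phi_\ell(\mathsf{A}^{(c)})\asymp e^{\ell P_c}$ from Step 1 (or classical thermodynamic formalism when $d_c=1$) gives $P(\mathsf{A},s)\leq\max_c P_c$; thus $P(\mathsf{A},s)=\max_c P_c$. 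A standard lemma on block-triangular cocycles moreover gives $\Lambda(\mathsf{A},\mu)=\max_c\Lambda(\mathsf{A}^{(c)},\mu)$ for every ergodic $\mu$ (the inequality $\geq$ is the block bound just used; $\leq$ follows from the same polynomial decomposition together with Kingman's theorem). Hence if $\mu$ is an ergodic equilibrium state of $(\mathsf{A},s)$ and $c$ realises the maximum, then $P(\mathsf{A},s)=h(\mu)+s\Lambda(\mathsf{A}^{(c)},\mu)\leq P_c\leq P(\mathsf{A},s)$, which forces $P_c=P(\mathsf{A},s)$ and makes $\mu$ an equilibrium state of $(\mathsf{A}^{(c)},s)$, hence (by Steps 1--3 if $d_c\geq2$, or by classical formalism if $d_c=1$) equal to the unique equilibrium state $\mu_c$ of $(\mathsf{A}^{(c)},s)$. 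Conversely, for each $c$ with $P_c=P(\mathsf{A},s)$ the inequality $\Lambda(\mathsf{A},\mu_c)\geq\Lambda(\mathsf{A}^{(c)},\mu_c)$ gives $h(\mu_c)+s\Lambda(\mathsf{A},\mu_c)\geq P_c=P(\mathsf{A},s)$, so $\mu_c$ is an equilibrium state of $(\mathsf{A},s)$. Finally, $h$ is affine and $\Lambda(\mathsf{A},\cdot)$ is affine (Kingman's theorem gives $\Lambda(\mathsf{A},\mu)=\int\big(\lim_n\frac1n\log\|A_{x_n}\cdots A_{x_1}\|\big)\,d\mu$), so the set of equilibrium states of $(\mathsf{A},s)$ is compact and convex, and its extreme points are ergodic (a non-ergodic equilibrium state is, by that affinity, a non-trivial average of its ergodic components, which are themselves equilibrium states); by the above this set is therefore exactly the convex hull of $\{\mu_c:P_c=P(\mathsf{A},s)\}$, and since each $\mathsf{A}^{(c)}$ is irreducible, $\{\mu_c\}$ is precisely the set of equilibrium states of $(\mathsf{A}^{(c)},s)$ --- which is the asserted description.
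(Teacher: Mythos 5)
This statement is quoted in the paper from Feng--K\"aenm\"aki \cite{FeKa11} and is not proved there, so there is no internal proof to compare against; your outline does follow the standard strategy of that reference (quasi-multiplicativity from irreducibility --- essentially the paper's Lemma \ref{le:ire} --- then a Gibbs construction, a relative-entropy/martingale uniqueness argument, and a block-triangular reduction), and Steps 1, 3 and 4 are sound in outline. However, there is one genuine flaw in Step 2. You claim that the Gibbs property together with quasi-multiplicativity yields a two-sided estimate $\mu([\mathbf{i}]\cap\sigma^{-(|\mathbf{i}|+r)}[\mathbf{j}])\asymp\mu([\mathbf{i}])\mu([\mathbf{j}])$ uniformly in $\mathbf{i},\mathbf{j}$ \emph{and all large $r$}. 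The upper bound is correct (it is exactly inequality \eqref{eq:ac} in the proof of Theorem \ref{th:mix}), but the lower bound for all large $r$ is false in general: it extends from cylinders to arbitrary measurable sets by the same approximation argument as the upper bound, and applied to a $\sigma^2$-invariant set $E$ and its complement (with $r$ even) it would force total ergodicity of $\mu$. Proposition \ref{pr:notmix} of the paper exhibits an irreducible pair satisfying all hypotheses of the theorem whose unique equilibrium state is \emph{not} ergodic for $\sigma^2$, so no such uniform-in-$r$ lower bound can hold. The underlying reason your sketch cannot deliver it is that quasi-multiplicativity only supplies a connecting word of some length $\ell\leq d-1$ depending on the pair; you cannot pad it to an arbitrary prescribed length $r$ without losing control of the norm (or, in the example just cited, without running into the parity obstruction).

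This matters structurally, because ergodicity of the constructed measure $\mu$ is exactly what Step 3 needs to upgrade $\nu\ll\mu$ to $\nu=\mu$, so as written the uniqueness argument has a gap. It is repairable in at least two ways. (a) Use the correct weaker statement: for every pair of cylinders there exists $r\in\{0,\ldots,d-1\}$ with $\mu([\mathbf{i}]\cap\sigma^{-(|\mathbf{i}|+r)}[\mathbf{j}])\geq c\,\mu([\mathbf{i}])\mu([\mathbf{j}])$ (insert the single connecting word and apply the Gibbs bounds); then, for a putative $\sigma$-invariant set $E$ with $0<\mu(E)<1$, approximate $E$ and $E^c$ by unions of cylinders of a common length and pigeonhole over the $d$ possible values of $r$ to reach a contradiction. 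This yields ergodicity of $\mu$ with respect to $\sigma$ (and, correctly, nothing stronger). (b) Alternatively, bypass the mixing-type estimate entirely: the set of equilibrium states is nonempty, convex and compact, $h$ and $\Lambda(\mathsf{A},\cdot)$ are affine and compatible with ergodic decomposition (the argument you already use in Step 4), so ergodic equilibrium states exist; if you strengthen Step 3 to show that \emph{every} equilibrium state satisfies the two-sided Gibbs inequality \eqref{eq:Gibbs} (as Feng--K\"aenm\"aki do), then any two ergodic equilibrium states are mutually absolutely continuous, hence equal, and uniqueness plus ergodicity follow. With either repair the remainder of your argument goes through.
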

It transpires that in the case $s=2$, these measures coincide with a class of measures investigated by S. Kusuoka \cite{Ku89}: this connection is explored in the appendix. 

Properties of matrix equilibrium states further to those listed above have to the best of the author's knowledge not yet been investigated. 
The purpose of this article is to explore the extent to which the equilibrium states of matrices enjoy the same properties as the classical equilibrium states described in the introduction. We in particular attempt to resolve the following questions:
\begin{itemize}
\item
Are matrix equilibrium states fully supported?
\item
Are matrix equilibrium states mixing with respect to the shift transformation?

\item
Do matrix equilibrium states have positive entropy?

\item
To what extent does a matrix equilibrium state allow us to reconstruct the original matrices?
\item
When are the Lyapunov exponents of a tuple of matrices (with respect to their equilibrium state) all equal to one another?
\end{itemize}
The answers to these questions vary widely in difficulty. For example, the question of full support of equilibrium states is almost trivial:
\begin{proposition}\label{th:sport}
Let $\mu$ be the unique equilibrium state of $(\mathsf{A},s)$, where $\mathsf{A}\in M_d(\mathbb{R})^M$ is irreducible. Then $\mu$ is fully supported if and only if $0 \notin \mathcal{S}(\mathsf{A})$. 
\end{proposition}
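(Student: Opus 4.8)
The plan is to deduce both implications immediately from the Gibbs inequality \eqref{eq:Gibbs} of Theorem \ref{th:Gibbs}, which is available precisely because $\mathsf{A}$ is irreducible (so in particular $P(\mathsf{A},s)>-\infty$). The only general fact I would invoke is that a Borel probability measure on $\Sigma_M$ is fully supported if and only if it assigns positive measure to every cylinder $[x_1\cdots x_n]$, since the cylinder sets form a basis for the topology of $\Sigma_M$.

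For the direction ``$0\notin\mathcal{S}(\mathsf{A})$ implies $\mu$ fully supported'': given any finite word $x_1,\ldots,x_n$, the matrix $A_{x_n}\cdots A_{x_1}$ belongs to $\mathcal{S}(\mathsf{A})$ and is therefore nonzero, so $\|A_{x_n}\cdots A_{x_1}\|^s>0$. The right-hand bound in \eqref{eq:Gibbs} then yields $\mu([x_1\cdots x_n])\geq C^{-1}e^{-nP(\mathsf{A},s)}\|A_{x_n}\cdots A_{x_1}\|^s>0$, and since the word was arbitrary $\mu$ charges every cylinder and hence is fully supported.

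For the converse I would argue by contraposition. If $0\in\mathcal{S}(\mathsf{A})$ then $A_{j_1}A_{j_2}\cdots A_{j_n}=0$ for some $j_1,\ldots,j_n\in\{1,\ldots,M\}$; setting $x_k:=j_{n+1-k}$ produces a word with $A_{x_n}\cdots A_{x_1}=A_{j_1}\cdots A_{j_n}=0$. The left-hand bound in \eqref{eq:Gibbs} now forces $\mu([x_1\cdots x_n])\leq Ce^{-nP(\mathsf{A},s)}\|A_{x_n}\cdots A_{x_1}\|^s=0$, so the nonempty open set $[x_1\cdots x_n]$ is $\mu$-null and $\mu$ is not fully supported.

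I do not anticipate any genuine obstacle: as the text already signals, the statement is essentially a restatement of the Gibbs property, and the only point requiring a moment's care is keeping track of the reversal of indices relating an element $A_{j_1}\cdots A_{j_n}$ of the semigroup $\mathcal{S}(\mathsf{A})$ to the cocycle $A_{x_n}\cdots A_{x_1}$ appearing in \eqref{eq:Gibbs}.
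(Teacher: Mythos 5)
Your proposal is correct and follows essentially the same route as the paper: reduce to cylinder sets (which form a basis for the topology) and read off both implications directly from the Gibbs inequality \eqref{eq:Gibbs}, noting that $\mu([x_1\cdots x_n])$ vanishes exactly when $A_{x_n}\cdots A_{x_1}=0$. The only difference is that you spell out the two directions and the index reversal explicitly, which the paper leaves implicit.
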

\begin{proof}
Since cylinder sets form a basis for the topology of $\Sigma_M$ it is sufficient to consider the question of whether every cylinder set has positive measure.
By the Gibbs inequality \eqref{eq:Gibbs} the cylinder $[x_1\cdots x_n]$ has zero measure if and only if $A_{x_n}\cdots A_{x_1}=0$.
\end{proof}
In this article we attempt to give the most precise possible answers to the questions listed above. We will see that the answer to each of the different questions is characterised by a slightly different linear-algebraic property, and to attempt to characterise all of the above properties in a single theorem would lead to an unreasonably long statement. We therefore address these questions in separate theorems, each of which is presented in its own chapter. In this article we are not able to give equally satisfactory answers to all of the above questions: to a significant extent this reflects the degree to which the corresponding linear-algebraic properties are understood.

We anticipate that in typical applications more will be known about the matrices $A_1,\ldots,A_M$ than the simple fact of their being irreducible. We recall that $(A_1,\ldots,A_M)$ is called \emph{strongly irreducible} if there does not exist a set $F$ which is equal to a finite union of proper nontrivial linear subspaces of $\mathbb{R}^d$ and satisfies $A_iF\subseteq F$ for every $i=1,\ldots,M$. For the reader's convenience, we note the implications of some natural hypotheses on $(A_1,\ldots,A_M)$ in the following theorem: 
\begin{theorem}
Suppose that $\mathsf{A}=(A_1,\ldots,A_M)\in M_d(\mathbb{R})^M$ is irreducible, where $M,d \geq 2$, and suppose that no $A_i$ is the zero matrix. Let $s>0$, and let $\mu$ be the equilibrium state of $(\mathsf{A},s)$. Then:
\begin{enumerate}[(i)]
\item
If $\mathsf{A}$ is strongly irreducible then $\mu$ is mixing and has nonzero entropy.
\item
If every $A_i$ is invertible then $\mu$ is fully supported and has nonzero entropy. 
\item
If $\mathcal{S}(\mathsf{A})$ contains an element whose eigenvalues are not all equal in modulus then the Lyapunov exponents of $\mathsf{A}$ with respect to $\mu$ are not all equal.
\item
If both (ii) and (iii) hold then $\mu$ is not a Bernoulli measure, and $\mu$ is not the equilibrium state of  $(\mathsf{A},t)$ for any $t \neq s$. 
\end{enumerate} 
\end{theorem}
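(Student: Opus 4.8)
I would treat the four parts as corollaries of four separate characterisations, to be established in earlier sections: complete descriptions of (a) when $h(\mu)=0$, (b) when $\mu$ is mixing, (c) when the Lyapunov spectrum of $\mathsf A$ at $\mu$ is degenerate, and (d) when $\mu$ is Bernoulli or is shared by two parameters. The present theorem is then a matter of verifying that the stated hypotheses land on the right side of each. The full-support clause of (ii) is immediate: if every $A_i$ is invertible then every word matrix $A_{x_n}\cdots A_{x_1}$ is invertible, so $0\notin\mathcal S(\mathsf A)$ and Proposition~\ref{th:sport} applies. The engine for the entropy clauses is the equivalence
\[h(\mu)=0\quad\Longleftrightarrow\quad P(\mathsf A,s)=s\log\hat\rho(\mathsf A),\]
where $\hat\rho(\mathsf A)$ is the joint spectral radius. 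Here ``$\Leftarrow$'' follows from $h(\mu)=P(\mathsf A,s)-s\Lambda(\mathsf A,\mu)\ge 0$ (using that $\mu$ attains the supremum in Theorem~\ref{th:subvar}) together with $\Lambda(\mathsf A,\mu)\le\log\hat\rho(\mathsf A)$; for ``$\Rightarrow$'' one combines the Gibbs inequality~\eqref{eq:Gibbs}, which yields $\sum_{|w|=n}\|A_w\|^s\asymp e^{nP(\mathsf A,s)}$, with the Shannon--McMillan--Breiman theorem for the ergodic measure $\mu$ (ergodicity being a consequence of uniqueness): vanishing entropy pushes nearly all the mass onto subexponentially many length-$n$ cylinders, one of which must then carry matrix norm $\asymp e^{nP(\mathsf A,s)/s}$, forcing $\hat\rho(\mathsf A)\ge e^{P(\mathsf A,s)/s}$; the opposite inequality is automatic.

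Granting this, the entropy clauses of (i) and (ii) reduce to showing $P(\mathsf A,s)>s\log\hat\rho(\mathsf A)$, under strong irreducibility and under invertibility respectively. For strong irreducibility I would use a spreading estimate on projective space: strong irreducibility together with $M\ge 2$ and $A_i\ne 0$ produces exponentially many words of each length $n$ whose norms are comparable to one another and not too far below the maximum, so the sum defining $P(\mathsf A,s)$ strictly dominates its largest term on the exponential scale. For invertibility I would split into cases according to whether the Lyapunov exponents of $\mathsf A$ at $\mu$ are all equal: if they are, then by the rigidity result below $\mathsf A$ is conjugate to a tuple $(c_1O_1,\ldots,c_MO_M)$ with $c_i>0$ and each $O_i$ orthogonal, whence $P(\mathsf A,s)=\log\sum_i c_i^s>s\log\max_i c_i=s\log\hat\rho(\mathsf A)$ because $M\ge 2$ and all $c_i>0$; if they are not all equal, irreducibility again yields exponentially many comparably-normed words. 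The mixing clause of (i) likewise reduces to checking that strong irreducibility rules out the structural obstruction in the general mixing criterion — a nontrivial cyclic block decomposition of $\mathbb{R}^d$ preserved by $\mathsf A$ — since any such decomposition would exhibit a proper invariant finite union of subspaces.

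For (iii) the statement is vacuous unless every $A_i$ is invertible: if some $A_{i_0}$ is singular then $\mu([i_0])>0$ by~\eqref{eq:Gibbs} (as $A_{i_0}\ne 0$), so $\mu$-almost every word contains the letter $i_0$ and hence has zero bottom singular value, giving bottom Lyapunov exponent $-\infty<\Lambda(\mathsf A,\mu)$. Assuming all $A_i$ invertible, normalise by $\tilde A_i:=|\det A_i|^{-1/d}A_i$, so that the exponents of $\tilde{\mathsf A}$ sum to zero and the exponents of $\mathsf A$ at $\mu$ are all equal exactly when $\Lambda(\tilde{\mathsf A},\mu)=0$. The core is the rigidity assertion: if $\mathsf A$ is irreducible with invertible entries and $\Lambda(\tilde{\mathsf A},\mu)=0$, then $\mathcal S(\tilde{\mathsf A})$ is bounded; since $\tilde{\mathsf A}$ is irreducible it then carries an extremal (Barabanov) norm, so $\overline{\mathcal S(\tilde{\mathsf A})}$ is a compact subsemigroup of $\mathrm{GL}_d(\mathbb{R})$, hence a compact group, hence conjugate into $O(d)$; consequently every element of $\mathcal S(\mathsf A)$, being a scalar multiple of an element of $\mathcal S(\tilde{\mathsf A})$, has eigenvalues of equal modulus, contradicting the hypothesis. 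The step from ``$\Lambda(\tilde{\mathsf A},\mu)=0$'' to ``$\mathcal S(\tilde{\mathsf A})$ bounded'' is the main obstacle: equality of all exponents yields, via the Oseledets theorem, a measurable $\sigma$-invariant conformal structure along $\mu$-typical orbits that the cocycle preserves up to a scalar, and one must upgrade this to a genuine (continuous, hence constant up to conjugacy) invariant structure; this is where the Gibbs property and full support of $\mu$, combined with irreducibility of $\mathsf A$, are used, in a Liv\v{s}ic-type argument.

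For (iv), assume all $A_i$ invertible and fix $B\in\mathcal S(\mathsf A)$ whose eigenvalues are not all of equal modulus. If $\mu$ were Bernoulli, say $\mu([x_1\cdots x_n])=\prod_{j=1}^n p_{x_j}$, then $p_i>0$ for every $i$ (full support, from (ii)), and~\eqref{eq:Gibbs} gives $\|A_{x_n}\cdots A_{x_1}\|\asymp\prod_{j=1}^n c_{x_j}$ with $c_i:=(e^{P(\mathsf A,s)}p_i)^{1/s}>0$, uniformly over all words; hence $\mathsf A':=(c_1^{-1}A_1,\ldots,c_M^{-1}A_M)$ — still irreducible, still with invertible entries — generates a bounded semigroup, so $\overline{\mathcal S(\mathsf A')}$ is a compact subsemigroup of $\mathrm{GL}_d(\mathbb{R})$, therefore a compact group, therefore conjugate into $O(d)$, and every element of $\mathcal S(\mathsf A)$ then has eigenvalues of equal modulus, contradicting the choice of $B$. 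Likewise, if $\mu$ were also the equilibrium state of $(\mathsf A,t)$ for some $t\ne s$, dividing the two instances of~\eqref{eq:Gibbs} gives $\|A_{x_n}\cdots A_{x_1}\|\asymp e^{n\alpha}$ with $\alpha:=(P(\mathsf A,s)-P(\mathsf A,t))/(s-t)$, so $(e^{-\alpha}A_1,\ldots,e^{-\alpha}A_M)$ generates a bounded semigroup and the same contradiction follows. Here I use the standard facts that a compact subsemigroup of a topological group is a subgroup and that a compact subgroup of $\mathrm{GL}_d(\mathbb{R})$ is conjugate into $O(d)$ via an averaged inner product. Apart from the rigidity input of (iii), every step rests only on the Gibbs inequality, Proposition~\ref{th:sport}, Theorem~\ref{th:subvar}, and elementary linear algebra.
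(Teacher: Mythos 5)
Your overall architecture -- deducing the four clauses from prior characterisations of zero entropy, mixing, degenerate Lyapunov spectrum and Bernoullicity -- is exactly how the paper proceeds (it combines Proposition \ref{th:sport} with Theorems \ref{th:mix}, \ref{th:entropy}, \ref{th:be2} and \ref{th:whatever}), and several of your reductions are sound: the full-support clause via invertibility and Proposition \ref{th:sport}, and the observation that reducibility of the set of length-$n$ products would produce an $\mathsf{A}$-invariant finite union of proper subspaces, so that strong irreducibility feeds into Theorem \ref{th:mix}. The trouble is in the two places where you substitute your own characterisations for the paper's. For the entropy clauses of (i) and (ii), your equivalence $h(\mu)=0\Leftrightarrow P(\mathsf{A},s)=s\log\varrho_\infty(\mathsf{A})$ is fine, but the ``spreading estimate'' you use to rule out equality -- exponentially many length-$n$ words with norms comparable to one another and to the maximum -- is asserted without proof and is false as stated. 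Take $A_1=\mathrm{diag}(2,\tfrac12)$ and $A_2=\varepsilon R_\theta$ with $\theta$ irrational and $\varepsilon$ small: this pair is strongly irreducible and invertible, $\varrho_\infty=2$, yet a word containing $k$ letters $2$ has norm at most $2^n(\varepsilon/2)^k$, so only the polynomially many words with $O(1)$ occurrences of $A_2$ come within a constant factor of $\max_{|w|=n}\|A_w\|\asymp 2^n$. The inequality $P(\mathsf{A},s)>s\log\varrho_\infty(\mathsf{A})$ does hold there, but proving it needs a genuine entropy-versus-norm-deficit trade-off (exponentially many words whose deficit $e^{-\delta' n}$ is beaten by a count $e^{\delta n}$ with $\delta>s\delta'$), which you neither formulate nor prove; the same unproved claim is invoked again in the ``exponents not all equal'' branch of (ii). The paper sidesteps this entirely: by Theorem \ref{th:entropy}, $h(\mu)=0$ forces $\mu$ onto a periodic orbit with the rigid structure of condition (iii) there, and that structure is incompatible with ``no $A_i=0$, $M\geq 2$'' together with either invertibility or strong irreducibility (the union $R_1\cup\cdots\cup R_n$ would be an invariant finite union of proper subspaces).

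The second gap is the rigidity step underpinning your (iii): passing from equality of all exponents to boundedness of the determinant-normalised semigroup is the whole content of that clause, and you leave it as an avowed sketch (``measurable conformal structure \ldots\ upgraded by a Liv\v{s}ic-type argument''). Such nonuniform rigidity arguments are serious undertakings and nothing is supplied. The paper's route is different and elementary: equal exponents force $\mu$ to be the equilibrium state of the scalar potential $x\mapsto\frac{s}{d}\log|\det A_{x_1}|$, hence a fully supported Bernoulli measure with every $A_i$ invertible; Theorem \ref{th:be1} then gives multiplicativity of the spectral radius on $\mathcal{S}(\mathsf{A})$, and the Protasov--Voynov theorem yields conjugacy into scalar multiples of $O(d)$ (Theorem \ref{th:be2}); clause (iv) then drops out of Theorems \ref{th:be2}, \ref{th:cks} and \ref{th:whatever}. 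Finally, in your (iv) the step ``$\|A'_w\|\asymp 1$ for all words, hence $\overline{\mathcal{S}(\mathsf{A}')}$ is a compact subsemigroup of $GL_d(\mathbb{R})$, hence a compact group'' is not justified: a two-sided bound on norms bounds neither inverses nor determinants of long products, so the closure may a priori contain singular matrices and the compact-semigroup-is-a-group argument does not apply as written (it is legitimate for the determinant-normalised tuple in (iii), where $|\det|=1$ passes to limits, but there the boundedness itself is the unproven step). The clean repair is again the paper's: deduce from the Gibbs inequality that the spectral radius is multiplicative on $\mathcal{S}(\mathsf{A})$ and invoke Protasov--Voynov, rather than a compact-group argument.
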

This result follows easily from the combination of Proposition \ref{th:sport} with Theorems \ref{th:mix}, \ref{th:entropy}, \ref{th:be2} and \ref{th:whatever} below.

In the following sections we deal in turn with the properties of equilibrium states described above. At the conclusion of the article we list some problems for future research. To avoid trivialities, it will always be assumed that $M \geq 2$.


\section{Mixing}

Perhaps surprisingly, ergodic matrix equilibrium states can fail to be mixing. However, a mild additional irreducibility condition suffices to guarantee mixing:
\begin{theorem}\label{th:mix}
Let $\mathsf{A}=(A_1,\ldots,A_M) \in M_d(\mathbb{R})^M$ be irreducible, let $s>0$, and let $\mu \in \mathcal{M}_\sigma $ be the unique equilibrium state of $(\mathsf{A},s)$. Then:
\begin{enumerate}[(i)]
\item
If for every $n \geq 1$ the set of all products $A_{i_n}\cdots A_{i_1}$ such that $i_1,\ldots,i_n \in \{1,\ldots,M\}$ is irreducible, then $\mu$ is totally ergodic.
\item
If $\mu$ is totally ergodic, then it is mixing.
\end{enumerate}
\end{theorem}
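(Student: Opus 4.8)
The plan is to analyse the two parts separately, both by exploiting the Gibbs inequality \eqref{eq:Gibbs} to transfer dynamical questions about $\mu$ into linear-algebraic questions about the products $A_{x_n}\cdots A_{x_1}$.

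For part (i), I would argue by contradiction. Suppose $\mu$ is not totally ergodic, so that $\mu$ is not ergodic with respect to $\sigma^p$ for some $p \geq 2$. By ergodic decomposition, $\mu = \frac{1}{p}\sum_{j=0}^{p-1} \nu \circ \sigma^{-j}$ where $\nu$ is a nontrivial $\sigma^p$-invariant and $\sigma^p$-ergodic component, and the sets $\sigma^{-j}(\mathrm{supp}\,\nu)$ are essentially disjoint; this is the standard structure of non-totally-ergodic systems. Now observe that if we group the symbols in blocks of length $p$, the measure $\mu$ pushes forward under the coding $\Sigma_M \to \Sigma_{M^p}$ to an equilibrium state for the $p$-fold product system, whose potential is governed by the matrix tuple $\mathsf{A}_p := (A_{i_p}\cdots A_{i_1})_{(i_1,\ldots,i_p)}$. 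The hypothesis that every such product set is irreducible guarantees in particular that $\mathsf{A}_p$ is irreducible, so by Theorem \ref{th:Gibbs} the equilibrium state of $(\mathsf{A}_p, s)$ is \emph{unique}. On the other hand, the decomposition above would exhibit $\nu$ (read in the block coding) as an equilibrium state of $(\mathsf{A}_p,s)$ which is not fully supported — its support omits the blocks corresponding to the other phases — contradicting the full-support consequence of uniqueness plus the Gibbs bound \eqref{eq:Gibbs} (the block products are nonzero because a vanishing length-$p$ product would already force $0 \in \mathcal S(\mathsf A)$, making $\mathsf A$ reducible, contrary to hypothesis). Hence $\mu$ is totally ergodic. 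The one subtlety to check carefully is that the pushforward of $\mu$ under the block coding really is \emph{the} equilibrium state of $(\mathsf A_p,s)$ and not merely \emph{an} equilibrium state; this follows because pressure is multiplicative in the sense $P(\mathsf A_p,s) = pP(\mathsf A,s)$ and the variational principle (Theorem \ref{th:subvar}) applied on both sides matches entropies and Lyapunov exponents, $h_\nu(\sigma^p) = p\,h_\nu(\sigma)$ and $\Lambda(\mathsf A_p,\nu) = p\,\Lambda(\mathsf A,\nu)$.

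For part (ii), the standard route is to pass to the natural extension or, more directly, to use the spectral characterisation: a totally ergodic system fails to be mixing only if it has a nontrivial eigenvalue $e^{2\pi i\theta}$ with $\theta$ irrational, giving a measurable eigenfunction $g$ with $g\circ\sigma = e^{2\pi i\theta}g$. I would rule this out using the Gibbs property. The idea is that the Gibbs inequality \eqref{eq:Gibbs} gives quantitative control on $\mu([x_1\cdots x_n])$ that is incompatible with the existence of such an eigenfunction: more concretely, one shows that the equilibrium state has a weak-Bernoulli-type or $\psi$-mixing-type property along a density-one sequence of times, which is enough to force any eigenvalue to be a root of unity, and then total ergodicity eliminates those too. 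An alternative and perhaps cleaner approach is to observe that equilibrium states of almost-additive (here quasi-multiplicative) potentials over a mixing subshift of finite type satisfy a Rokhlin-type tower argument, or to invoke the general principle that Gibbs measures for Hölder-like potentials are mixing whenever they are totally ergodic because the transfer-operator spectrum is controlled; since the underlying shift $\sigma$ on $\Sigma_M$ is already (topologically) mixing, the measure-theoretic obstruction to mixing is purely the presence of rational spectrum, which total ergodicity removes.

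The main obstacle I anticipate is part (ii): unlike the classical Hölder case, the matrix potential $x \mapsto \log\|A_{x_n}\cdots A_{x_1}\|$ is only \emph{quasi-multiplicative} rather than additive, so the clean transfer-operator machinery (quasi-compactness with a spectral gap) is not available off the shelf, and one must instead work with the Gibbs bounds \eqref{eq:Gibbs} directly. The key technical step will be to leverage quasi-multiplicativity to obtain a comparison of the form $\mu([x\,w\,y]) \asymp \mu([x])\,\mu([y])$ for a suitable connecting word $w$ of controlled length, uniformly in $x,y$; once such a bridging estimate is in hand, the exclusion of irrational eigenvalues is routine. I would therefore expect part (ii) to occupy the bulk of the proof, with part (i) being comparatively short once the block-coding reduction and the uniqueness statement of Theorem \ref{th:Gibbs} are invoked.
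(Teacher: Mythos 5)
Your part (i) is, at its core, the paper's own argument: the block tuple $\mathsf{A}_p$ is irreducible by hypothesis, so $(\mathsf{A}_p,s)$ has a unique (and ergodic) equilibrium state by Theorem \ref{th:Gibbs}, and $\mu$, viewed as a $\sigma^p$-invariant measure, is an equilibrium state for it. However, the contradiction detour you take contains two incorrect assertions. First, $0\in\mathcal{S}(\mathsf{A})$ does not force reducibility: the pair $A_1=\left(\begin{smallmatrix}0&1\\0&0\end{smallmatrix}\right)$, $A_2=\left(\begin{smallmatrix}0&0\\1&0\end{smallmatrix}\right)$ is irreducible yet $A_1^2=0$. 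Second, the $\sigma^p$-ergodic components of $\mu$ are mutually singular as measures, but mutually singular measures can share the same closed support (two distinct fully supported Bernoulli measures, say), so ``$\nu$ is not fully supported'' does not follow and the appeal to full support is unjustified. Fortunately the detour is unnecessary: if $\mu$ were not $\sigma^p$-ergodic, each of its $\sigma^p$-ergodic components would itself be an equilibrium state of $(\mathsf{A}_p,s)$ (entropy and Lyapunov exponent are affine), contradicting uniqueness directly — which is essentially how the paper argues, without any contradiction at all.

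The genuine gap is in part (ii). Your argument rests on the claim that a totally ergodic system fails to be mixing only if it admits a measurable eigenfunction with irrational eigenvalue. This is false: absence of nontrivial eigenvalues characterises \emph{weak} mixing, not mixing, and weakly mixing non-mixing systems exist; likewise the ``general principle'' that removing rational spectrum over a topologically mixing shift yields measure-theoretic mixing is not a theorem, and you concede the transfer-operator route is unavailable here. So even if your bridging estimate were carried out and all eigenvalues excluded, you would have proved only weak mixing, and the passage from weak mixing to mixing — the actual content of (ii) — is absent from your proposal. The paper closes exactly this gap with an argument of Ornstein: submultiplicativity of the norm plus the Gibbs inequality give the one-sided bound $\mu(X\cap\sigma^{-n}Y)\le C^4\mu(X)\mu(Y)$ for cylinders (no lower bound, and hence no quasi-multiplicativity or connecting word, is needed). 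This bound is used twice: once to exclude irrational eigenvalues (push $\mu$ forward by the eigenfunction, use unique ergodicity of the irrational rotation and times $n_j$ with $T_\theta^{n_j}\to\mathrm{id}$ to reach $(f_*\mu)(A)\le C^4(f_*\mu)(A)^2<(f_*\mu)(A)$), giving weak mixing; and once to show that every weak-* accumulation point of the measures $\nu_n(X\times Y)=\mu(\sigma^{-n}X\cap Y)$ is $(\sigma\times\sigma)$-invariant and absolutely continuous with respect to $\mu\times\mu$, which is ergodic because $\mu$ is weakly mixing, so the only accumulation point is $\mu\times\mu$ and $\mu$ is mixing. Without some substitute for this second step, your outline does not prove (ii).
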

\begin{proof}
(i). For every $n \geq 1$ the $M^n$-tuple $\mathsf{A}_n$ consisting of all products $A_{i_n}\cdots A_{i_1}$ with $i_1,\ldots,i_n \in \{1,\ldots,M\}$ in lexicographical order is irreducible by hypothesis, and hence $(\mathsf{A}_n,s)$ admits a unique equilibrium state by Theorem \ref{th:Gibbs}, which we interpret as a $\sigma^n$-invariant measure on $\Sigma_M$. By Theorem \ref{th:Gibbs} this measure is ergodic with respect to $\sigma^n$. Since $\mu$ is $\sigma$-invariant it is also $\sigma^n$-invariant, and is clearly also an equilibrium state for $(\mathsf{A}_n,s)$. By uniqueness it follows that for every $n\geq 1$ the measure $\mu$ must be equal to the equilibrium state of $(\mathsf{A}_n,s)$, and hence is ergodic with respect to $\sigma^n$. This demonstrates that $\mu$ is totally ergodic.

(ii). If $X=[x_1\cdots x_k]$ and $Y=[y_1\cdots y_\ell]$ are cylinder sets and $n>k$, then
\begin{eqnarray*}{\lefteqn{\mu\left(X \cap \sigma^{-n}Y\right) }}& & \\
& &= \sum_{z_1,\ldots,z_{n-k}=1}^M \mu\left([x_1\cdots x_k z_1 \cdots z_{n-k}y_1\cdots y_\ell]\right)\\
& &\leq Ce^{-(n+\ell)P(A,s)}\sum_{z_1,\ldots,z_{n-k}=1}^M \left\|A_{y_\ell}\cdots A_{y_1} A_{z_{n-k}} \cdots A_{z_{1}}A_{x_k}\cdots A_{x_1}\right\|^s\\
& &\leq Ce^{-(n+\ell)P(A,s)}\sum_{z_1,\ldots,z_{n-k}=1}^M \left\|A_{y_\ell}\cdots A_{y_1}\right\|^s\left\| A_{z_{n-k}} \cdots A_{z_{1}}\right\|^s\left\|A_{x_k}\cdots A_{x_1}\right\|^s\\
& &\leq C^4\sum_{z_1,\ldots,z_{n-k}=1}^M \mu\left([x_1\cdots x_k]\right)\mu\left([ z_1 \cdots z_{n-k}]\right)\mu\left([y_1\cdots y_\ell]\right)\\
& &=C^4 \mu\left([x_1\cdots x_k]\right)\mu\left([y_1\cdots y_\ell]\right)=C^4\mu(X)\mu(Y),\end{eqnarray*}
so in particular
\begin{equation}\label{eq:ac}\limsup_{n\to\infty} \mu\left(X\cap \sigma^{-n}Y\right) \leq C^4 \mu(X)\mu(Y)\end{equation}
for every pair of cylinder sets $X,Y \subseteq \Sigma_M$. By standard approximation arguments this inequality extends to all measurable sets $X,Y \subseteq \Sigma_M$.

To show that this property implies mixing we follow a line of argument due to D. Ornstein \cite{Or72}, which we reproduce here for the reader's convenience. We begin by showing that $\mu$ is weak-mixing. For a contradiction suppose that $f \colon \Sigma_M \to \mathbb{C}$ is a nonzero measurable function such that $f(\sigma x)=e^{2\pi i \theta}f(x)$ $\mu$-a.e, where $e^{2\pi i\theta}\neq 1$. By total ergodicity $\theta$ must be irrational. Since $|f\circ \sigma| = |f|$ almost everywhere and $\sigma$ is ergodic with respect to $\mu$, $|f|$ is constant a.e., and by multiplying by a scalar if necessary we may assume $|f|=1$ a.e. 

Let $S^1$ denote the unit circle in $\mathbb{C}$, and consider the measure $f_*\mu$ on $S^1$ defined by $(f_*\mu)(A)=\mu(f^{-1}A)$. Since $f(x)\in S^1$ a.e. this defines a Borel probability measure on $S^1$. Define $T_\theta \colon S^1 \to S^1$ by $T_\theta z =e^{2\pi i \theta}z$, and observe that $f \circ \sigma = T_\theta \circ f$ $\mu$-a.e. For every measurable $A \subseteq S^1$ we have $(f_*\mu)(T_\theta^{-1} A)=\mu(f^{-1}(T_\theta^{-1}A))=\mu(\sigma^{-1}(f^{-1}A))=\mu(f^{-1}A)=(f_*\mu)(A)$ so that $f_*\mu$ is $T_\theta$-invariant. Since $T_\theta$ is uniquely ergodic, $f_*\mu$ is Lebesgue measure on $S^1$. In particular $f_*\mu$ is not atomic, and we may choose an interval $A\subset S^1$ such that $0<C^4(f_*\mu)(A)<1$. By taking suitable rational approximations to $\theta$ we may find a sequence $(n_j)$ of natural numbers such that $T_\theta^{n_j} \to \mathrm{id}_{S^1}$ uniformly as $j \to \infty$, and it follows from this that $\limsup_{n\to\infty} (f_*\mu)(T^{-n}_\theta A \cap A) = (f_*\mu)(A)$. Hence using \eqref{eq:ac}
\begin{align*}\mu(f^{-1}A) =(f_*\mu)(A)&=\limsup_{n\to\infty}(f_*\mu)(T^{-n}_\theta A \cap A)\\
& = \limsup_{n \to \infty} \mu(\sigma^{-n}f^{-1}A \cap f^{-1}A) \leq C^4 \mu(f^{-1}A)^2<\mu(f^{-1}A),\end{align*}
a contradiction, and we conclude that $\mu$ is weak-mixing as claimed.

Consider now the sequence of measures $\nu_n$ on $\Sigma_M\times \Sigma_M$ defined by $\nu_n(X\times Y):=\mu(\sigma^{-n}X \cap Y)$. It is easy to check directly that every $\nu_n$ is $(\sigma \times \sigma)$-invariant, and it is clear that $\mu$ is mixing if and only if  the only weak-* accumulation point of $(\nu_n)_{n=1}^\infty$ is  $\mu \times \mu$. Let $\nu$ be a weak-* accumulation point of this sequence: clearly $\nu$ is $(\sigma \times \sigma)$-invariant, and by \eqref{eq:ac} $\nu$ is absolutely continuous with respect to $\mu \times \mu$. Since $\mu$  is weak-mixing with respect to $\sigma$, $\mu\times \mu$ is ergodic with respect to $\sigma \times \sigma$, so by absolute continuity we must have $\nu=\mu \times \mu$. It follows that $\lim_{n\to\infty}\nu_n = \mu \times \mu$ in the weak-* topology so that $\mu$ is mixing as required.
\end{proof}
The following example illustrates that total ergodicity can fail to hold in certain cases where the irreducibility criterion Theorem \ref{th:mix}(i) is not met. We note that while the failure of the condition in Theorem \ref{th:mix}(i) is necessary for the failure of mixing, it is not sufficient: for example, if $d=2$ and every $A_i$ is given by the same rotation through $2\pi/n$ for some $n\geq 3$, then $\mathsf{A}$ is irreducible and the condition of Theorem \ref{th:mix}(i) is not satisfied, but for every $s>0$ the equilibrium state of $(\mathsf{A},s)$ is the measure of maximal entropy which is of course mixing.
\begin{proposition}\label{pr:notmix}
Let $s>0$, and define $\mathsf{A}:=(A_1,A_2)$ where
\[A_1:=\left(\begin{array}{cc}0&2\\ 1&0\end{array} \right),\qquad A_2:=\left(\begin{array}{cc}0&1\\ 2&0\end{array} \right).\]
Then for every $s>0$, the unique equilibrium state $\mu$ of $(\mathsf{A},s)$ is not ergodic with respect to $\sigma^2$. In particular it is not mixing with respect to $\sigma$.
\end{proposition}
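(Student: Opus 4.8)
The key observation is that every product of two of the given matrices is diagonal: a direct computation yields
\[A_1A_1 = A_2A_2 = 2I,\qquad A_2A_1 = \left(\begin{array}{cc}1 & 0\\ 0 & 4\end{array}\right),\qquad A_1A_2 = \left(\begin{array}{cc}4 & 0\\ 0 & 1\end{array}\right).\]
Let $\mathsf{A}_2$ denote the $4$-tuple of all products $A_{i_2}A_{i_1}$ with $i_1,i_2\in\{1,2\}$, listed in lexicographic order of $i_1i_2$, interpreted in the usual way as acting on $\Sigma_2$ via $\sigma^2$ (exactly as in the proof of Theorem \ref{th:mix}(i)). Then $\mathsf{A}_2$ consists of diagonal matrices and so is reducible, the two coordinate axes being invariant subspaces. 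I would first record two bookkeeping identities: grouping digits in pairs in \eqref{eq:pressure} gives $P(\mathsf{A}_2,s) = 2P(\mathsf{A},s)$, and for every $\nu\in\mathcal{M}_\sigma$ one has $h(\nu)+s\Lambda(\mathsf{A}_2,\nu) = 2h(\nu)+2s\Lambda(\mathsf{A},\nu)$ (Abramov's formula for the entropy term, and the definition of $\Lambda$ for the Lyapunov term). Consequently the equilibrium state $\mu$ of $(\mathsf{A},s)$, being $\sigma$-invariant and attaining $P(\mathsf{A},s)$, attains $P(\mathsf{A}_2,s)$ as well, and is therefore an equilibrium state of $(\mathsf{A}_2,s)$ by Theorem \ref{th:subvar}.

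Next I would apply the reducible case of Theorem \ref{th:Gibbs} to $(\mathsf{A}_2,s)$, which is legitimate since $P(\mathsf{A}_2,s) = 2P(\mathsf{A},s) > -\infty$. Writing $\mathsf{A}_2$ in the block form \eqref{eq:red}, the two diagonal blocks are the $1\times1$ tuples $\mathsf{A}_2^{(1)} = (2,1,4,2)$ and $\mathsf{A}_2^{(2)} = (2,4,1,2)$ consisting of the first and second diagonal entries; each is (trivially) irreducible, and each has pressure $\log(2^{s+1}+1+4^s)$, which is exactly the common value $P(\mathsf{A}_2,s)$. Hence the set of equilibrium states of $(\mathsf{A}_2,s)$ is the line segment $\{t\mu_1+(1-t)\mu_2 : t\in[0,1]\}$, where $\mu_1,\mu_2$ are the equilibrium states of the two blocks; these are the classical Bernoulli equilibrium states of the respective locally constant potentials, namely $\mu_1$ assigns the digits $11,12,21,22$ masses proportional to $2^s,1,4^s,2^s$, and $\mu_2$ assigns them masses proportional to $2^s,4^s,1,2^s$. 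In particular $\mu = t^*\mu_1+(1-t^*)\mu_2$ for some $t^*\in[0,1]$.

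It remains to rule out $\sigma^2$-ergodicity. Since $s>0$ we have $4^s\neq 1$, so $\mu_1\neq\mu_2$, and no nontrivial convex combination of these two distinct ergodic measures is ergodic; thus $\mu$ can be ergodic with respect to $\sigma^2$ only if $\mu\in\{\mu_1,\mu_2\}$. But neither $\mu_1$ nor $\mu_2$ is $\sigma$-invariant as a measure on $\Sigma_2$: for instance $\mu_1([1]) = \mu_1([11])+\mu_1([12])$ is proportional to $2^s+1$, whereas $\mu_1(\sigma^{-1}[1]) = \mu_1([11])+\mu_1([21])$ is proportional to $2^s+4^s$, and these are unequal because $4^s\neq1$; the analogous computation disposes of $\mu_2$. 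Since $\mu$ is $\sigma$-invariant, this forces $\mu\notin\{\mu_1,\mu_2\}$, and hence $\mu$ is not ergodic with respect to $\sigma^2$. Finally, any measure mixing with respect to $\sigma$ is mixing, hence ergodic, with respect to $\sigma^2$, so $\mu$ is not mixing with respect to $\sigma$ either. The argument is short once the length-two products are seen to be diagonal; the only steps requiring care are confirming that $\mu$ is an equilibrium state of the squared system and carrying out the final non-invariance computation, which is precisely where the asymmetry $4^s\neq 1$ does the work.
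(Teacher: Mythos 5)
Your argument is correct, and it follows the same overall strategy as the paper: pass to the length-two products, observe that they are simultaneously diagonal, and use the reducible case of Theorem \ref{th:Gibbs} to identify the $\sigma^2$-equilibrium states of the squared system as the segment joining two Bernoulli measures $\mu_1,\mu_2$ on the pair alphabet, with $\mu$ lying on that segment. Where you genuinely differ is the final exclusion step. The paper exploits a symmetry: $(A_1,A_2)$ is conjugate to $(A_2,A_1)$ by the coordinate swap, interchanging $A_1$ and $A_2$ interchanges $\mu_1$ and $\mu_2$, and the only point of the segment fixed by this interchange is the midpoint, so $\mu=\frac{1}{2}(\mu_1+\mu_2)$ exactly. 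You instead use that $\mu$ is $\sigma$-invariant while neither $\mu_1$ nor $\mu_2$ is (your computation $\mu_1([1])\propto 2^s+1$ versus $\mu_1(\sigma^{-1}[1])\propto 2^s+4^s$, using $4^s\neq 1$), so $\mu$ must be a proper convex combination of two distinct $\sigma^2$-invariant measures and hence cannot be $\sigma^2$-ergodic. Both routes are sound; yours is slightly more elementary, avoiding the appeal to invariance of equilibrium states under simultaneous conjugation, at the cost of not identifying the barycentric coefficient $t^*=\frac{1}{2}$, which the symmetry argument yields for free. Your preparatory bookkeeping ($P(\mathsf{A}_2,s)=2P(\mathsf{A},s)$, Abramov's formula, and the identification of the common block pressure with $P(\mathsf{A}_2,s)$ via the nonemptiness clause of Theorem \ref{th:Gibbs}) correctly spells out a step that the paper treats as immediate, namely that $\mu$ is an equilibrium state of the squared system.
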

\begin{proof}
The pair $(A_1,A_2)$ is clearly irreducible since neither matrix has a one-dimensional invariant subspace, so $(\mathsf{A},s)$ has a unique equilibrium measure $\mu_0$ on $\Sigma_2$. Consider now the equilibrium states of the matrices
\[A_1A_1=\left(\begin{array}{cc}2&0\\0&2\end{array} \right),\quad A_1A_2=\left(\begin{array}{cc}4&0\\ 0&1\end{array} \right)\]
\[A_2A_1=\left(\begin{array}{cc}1&0\\0&4\end{array} \right),\quad A_2A_2:=\left(\begin{array}{cc}2&0\\ 0&2\end{array} \right).\]
We we shall view these equilibrium states as $\sigma^2$-invariant measures defined on $\Sigma_2$.

It follows from Theorem \ref{th:Gibbs} that an ergodic $\sigma^2$-invariant measure $\mu$ on $\Sigma_2$ is an equilibrium state of $A_1A_1$, $A_1A_2$, $A_2A_1$, $A_2A_2$ if and only if it is an equilibrium state for one of two scalar-valued potentials, the first such potential being given by the upper-left entries of those matrices each raised to the power $s$ and the second by their lower-right entries each raised to the power $s$. Elementary computations show that this gives rise to two distinct Bernoulli equilibrium states $\mu_1$, $\mu_2$, and that interchanging the matrices $A_1$ and $A_2$ with one another interchanges the two scalar-valued potentials and therefore interchanges the two distinct equilibrium states. The equilibrium state $\mu_0$ of $A_1,A_2$ is also an equilibrium state of $A_1A_1,A_1A_2,A_2A_1,A_2A_2$, but it is symmetrical with respect to the interchange of $A_1$ and $A_2$ since the pair $(A_1,A_2)$ is similar to $(A_2,A_1)$ by the transformation which interchanges the two co-ordinate axes, and Lyapunov exponents (and hence equilibrium states) are unaffected by change-of-basis transformations. The only equilibrium state of $A_1A_1,A_1A_2,A_2A_1,A_2A_2$ which is symmetrical with respect to the interchange of $A_1$ and $A_2$ is $\frac{1}{2}\mu_1+\frac{1}{2}\mu_2$, so we necessarily have $\mu_0=\frac{1}{2}\mu_1+\frac{1}{2}\mu_2$. We see that $\mu_0$ is a proper linear combination of distinct $\sigma^2$-invariant measures, and therefore $\mu_0$ is not ergodic with respect to $\sigma^2$.
\end{proof}


\section{Equilibrium states with zero entropy}\label{se:entropy}

It transpires that matrix equilibrium states have zero entropy only in highly degenerate cases which we are able to characterise precisely. In this section we shall say that $(x_1,\ldots,x_n) \in \{1,\ldots,M\}^n$ is a \emph{cyclic permutation} of $(\omega_1,\ldots,\omega_n)\in \{1,\ldots,M\}^n$ if $(x_1,\ldots,x_n)=(\omega_{i+1},\ldots,\omega_n,\omega_1,\ldots,\omega_i)$ for some $i \in \{1,\ldots,n\}$. 
\begin{theorem}\label{th:entropy}
Let $\mathsf{A}=(A_1,\ldots,A_M) \in M_d(\mathbb{R})^M$ be irreducible, let $s>0$, and let $\mu \in \mathcal{M}_\sigma $ be the unique equilibrium state of $(\mathsf{A},s)$. Then the following three conditions are equivalent:
\begin{enumerate}[(i)]
\item
The measure $\mu$ has zero entropy.
\item
The measure $\mu$ is supported on a periodic orbit of $\sigma$.
\item
There exist $n,r \geq 1$ such that  $nr=d$, symbols $\omega_1,\ldots,\omega_n \in \{1,\ldots,M\}$ and a decomposition $\mathbb{R}^d=\bigoplus_{i=1}^nR_i$ of $\mathbb{R}^d$ into $r$-dimensional subspaces such that $A_iR_j=\{0\}$ when $i \neq \omega_j$, $A_{\omega_j}R_j=R_{j+1}$ when $1 \leq j<n$, and $A_{\omega_n}R_n=R_1$. For each $i=1,\ldots,n$ the product $A_{\omega_{i-1}}\cdots A_{\omega_1}A_{\omega_n}\cdots A_{\omega_i}$ maps $R_i$ to itself bijectively, and we have $A_{x_n}\cdots A_{x_1}\neq 0$ if and only if $(x_1,\ldots,x_n)$ is a cyclic permutation of $(\omega_1,\ldots,\omega_n)$.
\end{enumerate}
\end{theorem}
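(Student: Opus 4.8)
The plan is to prove the three conditions equivalent by establishing $\mathrm{(ii)}\Rightarrow\mathrm{(i)}$, $\mathrm{(iii)}\Rightarrow\mathrm{(ii)}$, $\mathrm{(i)}\Rightarrow\mathrm{(ii)}$ and $\mathrm{(ii)}\Rightarrow\mathrm{(iii)}$. Throughout, for a finite word $v=(v_1,\ldots,v_k)$ write $A_v:=A_{v_k}\cdots A_{v_1}$, and record the basic consequence of the Gibbs inequality \eqref{eq:Gibbs}: the cylinder $[v]$ has positive $\mu$-measure if and only if $A_v\neq 0$, and hence $x\in\operatorname{supp}\mu$ if and only if $A_{(x_1,\ldots,x_n)}\neq 0$ for every $n\geq1$. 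The implication $\mathrm{(ii)}\Rightarrow\mathrm{(i)}$ is immediate, since a $\sigma$-invariant measure supported on a periodic orbit is a finite average of point masses and so has zero entropy. For $\mathrm{(iii)}\Rightarrow\mathrm{(ii)}$, iterate the relations $A_iR_j=\{0\}$ for $i\neq\omega_j$ and $A_{\omega_j}R_j=R_{j+1}$ along the decomposition $\mathbb{R}^d=\bigoplus_{j=1}^n R_j$: this shows that $A_v\neq 0$ exactly when $v$ is a factor (contiguous subword) of the periodic sequence $\omega^\infty:=(\omega_1,\ldots,\omega_n,\omega_1,\ldots,\omega_n,\ldots)$ of length $|v|$, so $\operatorname{supp}\mu$ equals the finite $\sigma$-orbit of $\omega^\infty$ and $\mu$, being invariant and carried by it, is the uniform measure on that orbit.

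The implication $\mathrm{(i)}\Rightarrow\mathrm{(ii)}$ is the main point. Let $\varrho$ denote the joint spectral radius of $\mathsf{A}$, so that $\max_{|w|=n}\|A_w\|\geq\varrho^{\,n}$ for every $n$ (submultiplicativity and Fekete's lemma) and $\Lambda(\mathsf{A},\nu)\leq\log\varrho$ for every $\nu\in\mathcal{M}_\sigma$. From $h(\mu)=0$ and Theorem~\ref{th:subvar} we obtain $P(\mathsf{A},s)=s\Lambda(\mathsf{A},\mu)\leq s\log\varrho$, while \eqref{eq:pressure} gives $P(\mathsf{A},s)\geq s\log\varrho$; hence $P(\mathsf{A},s)=s\log\varrho$. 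Replacing each $A_i$ by $\varrho^{-1}A_i$ — which affects neither $\mu$ nor the truth of $\mathrm{(ii)}$ — we may assume $\varrho=1$, so $P(\mathsf{A},s)=0$; then \eqref{eq:Gibbs} shows that $\sum_{|w|=n}\|A_w\|^s$ is bounded above and below by positive constants uniformly in $n$, and $\max_{|w|=n}\|A_w\|\geq1$. Since $\mathsf{A}$ is irreducible it carries an extremal norm $\|\cdot\|_*$, one with $\|A_i\|_*\leq1$ for all $i$; consequently $\|A_v\|_*\leq\|A_{v'}\|_*$ whenever $v'$ is obtained from $v$ by deleting letters from either end, so $n\mapsto\|A_{(x_1,\ldots,x_n)}\|_*$ is non-increasing for every $x$.

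Now fix $\varepsilon>0$ small enough that $\|A\|_*\geq\varepsilon$ whenever $\|A\|\geq1$ (possible as the two norms are equivalent), and set $B_\varepsilon:=\{v:\|A_v\|_*\geq\varepsilon\}$. By the monotonicity just noted, $B_\varepsilon$ is closed under passing to subwords, so $K:=\bigcap_n\bigcup_{v\in B_\varepsilon,\,|v|=n}[v]$ is a closed, $\sigma$-invariant set; since $\sum_{|w|=n}\|A_w\|_*^s$ is uniformly bounded, $B_\varepsilon$ contains at most $C(\varepsilon)$ words of each length, whence $K$ is a finite set. On the other hand the sets $\bigcup_{v\in B_\varepsilon,\,|v|=n}[v]$ decrease with $n$ and each contains the cylinder of a $\|\cdot\|$-largest word of length $n$, which by \eqref{eq:Gibbs} and $P(\mathsf{A},s)=0$ has $\mu$-measure at least $C^{-1}\max_{|w|=n}\|A_w\|^s\geq C^{-1}>0$; hence $\mu(K)\geq C^{-1}>0$. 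Thus $\mu$ has an atom, and since $\mu$ is ergodic it is supported on a single periodic orbit, which is $\mathrm{(ii)}$. I expect this to be the hardest step: the idea is that zero entropy makes the collection of ``large'' finite words both prefix-stable and uniformly finite, which pins the equilibrium measure onto a finite set; the extremal norm and the uniform two-sided control of $\sum_{|w|=n}\|A_w\|^s$ are the devices that make this precise.

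Finally $\mathrm{(ii)}\Rightarrow\mathrm{(iii)}$. Write $\operatorname{supp}\mu$ as the $\sigma$-orbit of $\omega^\infty$ for a word $\omega=(\omega_1,\ldots,\omega_n)$ of minimal period $n$; by the basic observation, $A_v\neq 0$ iff $v$ is a length-$|v|$ factor of $\omega^\infty$. For $1\leq j\leq n$ put $B_j:=A_{\omega_{j-1}}\cdots A_{\omega_1}A_{\omega_n}\cdots A_{\omega_j}$, the product associated to the $j$-th cyclic rotation of $\omega$. Then $B_j^{\,k}$ is the product associated to a length-$kn$ factor of $\omega^\infty$, so $B_j^{\,k}\neq 0$ for all $k$, the ranks of $B_j^{\,k}$ stabilise to some $r\geq1$, and $R_j:=B_j^{\,k}\mathbb{R}^d$ (for $k$ large) is a well-defined $r$-dimensional subspace on which $B_j$ acts bijectively. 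Since minimality of the period forces two factors of $\omega^\infty$ agreeing on length $\geq n$ to occupy the same position modulo $n$, one gets $A_iB_j^{\,k}=0$ for $i\neq\omega_j$ (hence $A_iR_j=\{0\}$ for $i\neq\omega_j$) and likewise $B_jR_k=\{0\}$, i.e. $R_k\subseteq\ker B_j$, for $k\neq j$. The identity $A_{\omega_j}B_j=B_{j+1}A_{\omega_j}$ gives $A_{\omega_j}R_j=R_{j+1}$ (indices cyclic), which forces all $\dim R_j$ equal to $r$ and each such map bijective. Therefore $V:=\sum_j R_j$ is invariant under every $A_i$ and nonzero, so $V=\mathbb{R}^d$ by irreducibility; and $R_j\cap\sum_{k\neq j}R_k\subseteq R_j\cap\ker B_j=\{0\}$, so $\mathbb{R}^d=\bigoplus_{j=1}^n R_j$ and $nr=d$. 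Finally $A_{\omega_{i-1}}\cdots A_{\omega_1}A_{\omega_n}\cdots A_{\omega_i}=B_i$ is the composition of bijections $R_i\to R_{i+1}\to\cdots\to R_i$, and $A_{x_n}\cdots A_{x_1}\neq 0$ iff $(x_1,\ldots,x_n)$ is a length-$n$ factor of $\omega^\infty$, i.e. a cyclic permutation of $\omega$, which completes $\mathrm{(iii)}$. This step is essentially bookkeeping with eventual images of the periodic products; its only non-formal ingredient is the appeal to irreducibility to get $\sum_j R_j=\mathbb{R}^d$.
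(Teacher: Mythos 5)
Your proposal is correct, and the skeleton (ii)$\Rightarrow$(i), (iii)$\Rightarrow$(ii), (i)$\Rightarrow$(ii), (ii)$\Rightarrow$(iii) matches the paper's, but your treatment of the hard implication (i)$\Rightarrow$(ii) is genuinely different. The paper first shows that $h(\mu)=0$ makes $\mu$ an equilibrium state of $(\mathsf{A},2\ell)$ for an even integer $2\ell>s$, deduces $\varrho_{2\ell}(\mathsf{A})=\varrho_\infty(\mathsf{A})$, and then invokes Proposition \ref{pr:reduck} -- whose proof builds an extremal inner-product norm and appeals to the Lagarias--Wang finiteness theorem -- to produce a product with $\rho(A_{x_n}\cdots A_{x_1})^{1/n}=\varrho_\infty(\mathsf{A})$; the corresponding periodic-orbit measure is then an equilibrium state for $(\mathsf{A},s)$ and equals $\mu$ by uniqueness. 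You instead stay at the original exponent $s$: $h(\mu)=0$ gives $P(\mathsf{A},s)=s\log\varrho_\infty(\mathsf{A})$, and after normalising $\varrho_\infty=1$ the two-sided Gibbs control of $\sum_{|w|=n}\|A_w\|^s$, combined with an extremal norm, shows that the prefix-closed set of ``large'' words has uniformly bounded cardinality at each length while the corresponding nested cylinder unions carry $\mu$-mass at least $C^{-1}$; hence $\mu$ has an atom, and ergodicity of the unique equilibrium state forces support on a periodic orbit. Your route avoids the passage to even moments, Proposition \ref{pr:reduck} and Lagarias--Wang altogether, at the price of citing the existence of an extremal norm for irreducible families (Barabanov/Elsner), a standard but genuinely external ingredient -- note the paper only constructs such a norm under the extra hypothesis $\varrho_{2\ell}=\varrho_\infty$. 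What the paper's route buys in exchange is an explicit spectrum-maximising product (a finiteness-property witness), which your argument never exhibits. Your (ii)$\Rightarrow$(iii) is the paper's argument in mildly different clothing: you obtain the direct sum from $R_k\subseteq\ker B_j$ for $k\neq j$ plus injectivity of $B_j$ on $R_j$, and equal dimensions from $A_{\omega_j}B_j=B_{j+1}A_{\omega_j}$, where the paper uses equality of the eventual ranks of cyclic rotations and an induction on $\dim(R_1+\cdots+R_k)$; both are sound. Two cosmetic points: introduce the common rank $r$ only after the bijections $A_{\omega_j}\colon R_j\to R_{j+1}$ are established (as written you name it slightly prematurely), and remark that $\varrho_\infty(\mathsf{A})>0$ -- immediate from $P(\mathsf{A},s)>-\infty$ for irreducible $\mathsf{A}$ -- so the normalisation step is legitimate.
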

We draw the reader's attention to the fact that if an equilibrium state of an irreducible tuple of matrices has zero entropy then not only is it supported on a periodic orbit, but by (iii) above the period of that orbit must divide the dimension $d$.

We remark that (iii) above is satisfied with $n=1$, $r=d=2$ if $A_1$ is an irrational rotation of $\mathbb{R}^2$ and the other $A_i's$ are zero. Less trivially we note examples such as
\[A_1=\left(\begin{array}{cc}0&1\\0&0\end{array}\right),\qquad A_2=\left(\begin{array}{cc}0&0\\1&0\end{array}\right)\]
in two dimensions, and even less trivially
\[A_1=\left(\begin{array}{cccc}0&B_1&0&0\\0&0&B_2&0\\0&0&0&0\\0&0&0&0\end{array}\right),\qquad A_2=\left(\begin{array}{cccc}0&0&0&0\\0&0&0&0\\0&0&0&B_3\\B_4&0&0&0\end{array}\right)\]
where $B_1,\ldots,B_4$ are suitably-chosen invertible block matrices; in this last case we have $A_1^2A_2^2, A_2A_1^2A_2, A_2^2A_1^2,A_1A_2^2A_1 \neq 0$ and all other products of length four are zero. It transpires that for any given periodic orbit of length $d$ on $M$ symbols one may choose an $M$-tuple of $d \times d$ matrices where that periodic orbit is the sole equilibrium state, by adapting the argument of \cite[Proposition 1.3]{Mo10}.

The proofs of most directions of implication in Theorem \ref{th:entropy} are relatively brief and self-contained, but the implication (i)$\implies$(ii) is quite involved. To shorten the proof and render it more easily digestible, we precede the proof of Theorem \ref{th:entropy} with an ancillary result. For each $p \geq 1$ let us define the \emph{$p$-radius} of $\mathsf{A}$ to be the quantity\footnote{Here we have slightly modified the definition of $\varrho_p$ used in \cite{Pr97} in order to avoid a profusion of redundant terms in subsequent calculations. The original definition includes a normalising factor $1/M^n$ inside the parenthesis and outside the summation.}
\begin{equation}\label{eq:pradius}\varrho_p(\mathsf{A}):= \lim_{n \to \infty}\left(\sum_{i_1,\ldots,i_n=1}^M\|A_{i_n}\cdots A_{i_1}\|^p\right)^{\frac{1}{np}}=e^{P(\mathsf{A},p)/p}\end{equation}
and the \emph{joint spectral radius} of $\mathsf{A}$ to be the quantity
\[\varrho_\infty(\mathsf{A}):= \lim_{n \to \infty}\max_{1 \leq i_1,\ldots,i_n\leq M}\|A_{i_n}\cdots A_{i_1}\|^\frac{1}{n}=\inf_{n \geq 1}\max_{1 \leq i_1,\ldots,i_n\leq M}\|A_{i_n}\cdots A_{i_1}\|^\frac{1}{n},\]
a definition introduced by G.-C. Rota and G. Strang (\cite{RoSt60}, reprinted in \cite{Ro03}) and investigated in depth by numerous authors (see for example \cite{Ju09}). It is a simple matter to check that both of these quantities are unaffected by a change of basis or by a change of norm on $\mathbb{R}^d$. The joint spectral radius satisfies the additional characterisation
\begin{equation}\label{eq:bwf}\varrho_\infty(\mathsf{A})= \sup_{n \geq 1} \max_{1 \leq i_1,\ldots,i_n \leq M} \rho(A_{i_n}\cdots A_{i_1})^{\frac{1}{n}},\end{equation}
a result due to M. A. Berger and Y. Wang (\cite{BeWa92}, for some interesting alternative proofs see \cite{Bo03,El95}).  We require the following:
\begin{proposition}\label{pr:reduck}
Let $\mathsf{A}=(A_1,\ldots,A_M) \in M_d(\mathbb{R})^M$ and $\ell \in \mathbb{N}$. If $\varrho_\infty(\mathsf{A})=\varrho_{2\ell}(\mathsf{A})$ then there exists $(x_1,\ldots,x_n)\in\{1,\ldots,M\}$ such that $\varrho_\infty(\mathsf{A})=\rho(A_{x_n}\cdots A_{x_1})^{1/n}$.
\end{proposition}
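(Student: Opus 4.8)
The plan is to extract, from the hypothesis $\varrho_\infty(\mathsf{A})=\varrho_{2\ell}(\mathsf{A})$, a self-affine or at least "norm-saturating" structure that forces the joint spectral radius to be attained on a periodic product. After rescaling we may assume $\varrho_\infty(\mathsf{A})=1$, so that by \eqref{eq:pradius} we have $P(\mathsf{A},2\ell)=0$, i.e. $\sum_{|i|=n}\|A_{i_n}\cdots A_{i_1}\|^{2\ell}$ is bounded above by a constant uniformly in $n$ (it is also bounded below, since each individual term is at most $\varrho_\infty(\mathsf{A})^n=1$ up to a subexponential factor only if $\mathsf{A}$ is not reducible in a degenerate way — I will need to handle the genuinely degenerate case separately). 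First I would dispose of the reducible case by the block-triangular reduction \eqref{eq:red}: since $\rho(A_{x_n}\cdots A_{x_1})=\max_i\rho(A^{(i,i)}_{x_n}\cdots A^{(i,i)}_{x_1})$ and one checks easily that $\varrho_\infty(\mathsf{A})=\max_i\varrho_\infty(\mathsf{A}^{(i)})$ while $\varrho_{2\ell}(\mathsf{A})\geq\max_i\varrho_{2\ell}(\mathsf{A}^{(i)})$, the hypothesis descends to at least one irreducible diagonal block realising the joint spectral radius; a periodic product realising the joint spectral radius of that block is a periodic product realising it for $\mathsf{A}$ by \eqref{eq:bwf}. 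So I may assume $\mathsf{A}$ is irreducible.

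For irreducible $\mathsf{A}$ with $\varrho_\infty(\mathsf{A})=1$, the key is to produce an equilibrium state for $(\mathsf{A},2\ell)$ supported on a periodic orbit and then invoke Theorem \ref{th:entropy}. Concretely, by Theorem \ref{th:Gibbs} there is a unique equilibrium state $\mu$ for $(\mathsf{A},2\ell)$, and by the Gibbs bound \eqref{eq:Gibbs} together with $P(\mathsf{A},2\ell)=0$ we get $C^{-1}\mu([x_1\cdots x_n])\le\|A_{x_n}\cdots A_{x_1}\|^{2\ell}\le C\mu([x_1\cdots x_n])$. Summing over all words of length $n$ gives $\sum_{|i|=n}\|A_{i_n}\cdots A_{i_1}\|^{2\ell}\le C$ for all $n$, so in particular $\varrho_\infty(\mathsf{A})^{2\ell}=\limsup_n\max_{|i|=n}\|A_{i_n}\cdots A_{i_1}\|^{2\ell}$ is the limsup of terms in a summable-in-$n$-bounded family — but that alone does not yet give periodicity. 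The honest mechanism I would use is entropy: from the uniform bound $\sum_{|i|=n}\|A_{i_n}\cdots A_{i_1}\|^{2\ell}\le C$ and the Gibbs inequality, $-\sum_{|i|=n}\mu([i])\log\mu([i])\le -\sum_{|i|=n}\mu([i])\log(C^{-1}\|A_{i_n}\cdots A_{i_1}\|^{2\ell}) = \log C + 2\ell\sum_{|i|=n}\mu([i])\bigl(-\log\|A_{i_n}\cdots A_{i_1}\|\bigr)$, while the subadditive variational principle (Theorem \ref{th:subvar}) and $P(\mathsf{A},2\ell)=0$ force $h(\mu)=-2\ell\Lambda(\mathsf{A},\mu)$ and $\Lambda(\mathsf{A},\mu)\le\log\varrho_\infty(\mathsf{A})=0$; combined with nonnegativity of entropy, the delicate point is to push this to $h(\mu)=0$. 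I would argue that if $h(\mu)>0$ then $\Lambda(\mathsf{A},\mu)<0$ strictly, so $\|A_{x_n}\cdots A_{x_1}\|$ decays exponentially $\mu$-a.e., contradicting $\varrho_\infty(\mathsf{A})=1$ via \eqref{eq:bwf}: indeed $\varrho_\infty(\mathsf{A})=1$ means there are periodic products whose spectral radii are $\ge 1-\varepsilon$ for every $\varepsilon$, hence (being periodic, hence $\mu$-generic along their orbit only if $\mu$ charges them — which it does, by full support from Proposition \ref{th:sport} once $0\notin\mathcal{S}(\mathsf{A})$) the a.e. Lyapunov growth rate cannot be negative. The genuinely careful version of this last sentence — reconciling "$\Lambda(\mathsf{A},\mu)<0$" with "arbitrarily-close-to-neutral periodic orbits all charged by $\mu$" — I would make rigorous by noting that $\frac1n\log\|A_{x_n}\cdots A_{x_1}\|\to\Lambda(\mathsf{A},\mu)<0$ on a full-measure set, yet the cylinder $[\,\overline{w}\,]$ around a periodic word $w$ with $\rho(A_w)^{1/|w|}\ge 1-\varepsilon$ has positive measure and on its periodic point the growth rate is $\ge\log(1-\varepsilon)$, so choosing $\varepsilon$ small enough that $\log(1-\varepsilon)>\Lambda(\mathsf{A},\mu)$ gives the contradiction. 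Hence $h(\mu)=0$.

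With $h(\mu)=0$ in hand, Theorem \ref{th:entropy} (the implication (i)$\implies$(iii)) tells us $\mu$ is supported on a periodic orbit of $\sigma$, say the orbit of $(\omega_1,\ldots,\omega_n)^\infty$, and moreover — by the structure in (iii) — the product $A_{\omega_n}\cdots A_{\omega_1}$ acts on the subspace $R_1$ bijectively, and $\Lambda(\mathsf{A},\mu)=\frac1n\log\rho(A_{\omega_n}\cdots A_{\omega_1})$ (the Lyapunov exponent of a periodic-orbit-supported measure is the normalised log spectral radius of the period product on the nontrivial block, since on $R_1$ the return map is a linear bijection and the norm growth is governed by its spectral radius up to bounded factors). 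Since $\Lambda(\mathsf{A},\mu)=\log\varrho_\infty(\mathsf{A})$ by the variational characterisation (the equilibrium state of $(\mathsf{A},2\ell)$ with zero entropy must achieve $2\ell\Lambda(\mathsf{A},\mu)=P(\mathsf{A},2\ell)=2\ell\log\varrho_\infty(\mathsf{A})$), we conclude $\rho(A_{\omega_n}\cdots A_{\omega_1})^{1/n}=\varrho_\infty(\mathsf{A})$, which is exactly the desired conclusion with $(x_1,\ldots,x_n)=(\omega_1,\ldots,\omega_n)$.

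\textbf{Main obstacle.} The step I expect to be the crux is showing $h(\mu)=0$ — that is, ruling out a positive-entropy equilibrium state at the critical exponent $2\ell$. The combinatorial heart is that $\varrho_{2\ell}(\mathsf{A})=\varrho_\infty(\mathsf{A})$ says the $2\ell$-sum $\sum_{|i|=n}\|A_{i_n}\cdots A_{i_1}\|^{2\ell}$ grows only like $\varrho_\infty(\mathsf{A})^{2\ell n}$, i.e. with the \emph{same} exponential rate as its single largest term; intuitively this can only happen if essentially one product (up to cyclic permutation, and up to the block/rotation ambiguity that makes $2\ell$ rather than a single power the natural exponent) dominates, which is precisely the zero-entropy scenario. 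Converting this intuition into a proof — and in particular explaining where the parity "$2\ell$" (as opposed to an arbitrary $p$) is genuinely used, namely to absorb the possible failure of $\rho$ versus $\|\cdot\|$ to agree up to sign or orthogonal factors, cf. the footnote's allusion to $\varrho_p$ for $p$ even — is the delicate part, and I anticipate the cleanest route is the entropy/Lyapunov-exponent argument above rather than a direct combinatorial count.
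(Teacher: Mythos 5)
Your strategy has a fatal circularity within the logical structure of the paper: you invoke Theorem \ref{th:entropy} (the implication from zero entropy to periodic-orbit support, together with the structure in (iii)) to extract the periodic product, but Proposition \ref{pr:reduck} is precisely the ancillary result the paper proves \emph{in order to} establish that implication of Theorem \ref{th:entropy}. In the paper the order is: zero entropy of $\mu$ is the hypothesis, one deduces $\varrho_{2\ell}(\mathsf{A})=\varrho_\infty(\mathsf{A})$, then Proposition \ref{pr:reduck} supplies the spectral-radius-maximising product, and only then does one conclude that the periodic measure is the (unique) equilibrium state. So you cannot use that theorem here without an independent proof of it, which you do not supply.

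Even setting the circularity aside, the step you yourself flag as the crux --- that $\varrho_{2\ell}(\mathsf{A})=\varrho_\infty(\mathsf{A})$ forces $h(\mu)=0$ for the equilibrium state of $(\mathsf{A},2\ell)$ --- is not established by your argument. Assuming $h(\mu)>0$ gives $\Lambda(\mathsf{A},\mu)<\log\varrho_\infty(\mathsf{A})$, but this is not in tension with Berger--Wang: the a.e.\ convergence $\frac1n\log\|A_{x_n}\cdots A_{x_1}\|\to\Lambda(\mathsf{A},\mu)$ says nothing about the single periodic point inside a positive-measure cylinder, since that point is itself a null set; a measure can perfectly well assign positive mass to every cylinder $[\overline{w}]$ while the nearly-neutral periodic orbits form a $\mu$-null set on which the growth rate exceeds $\Lambda(\mathsf{A},\mu)$. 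Moreover your appeal to full support via Proposition \ref{th:sport} requires $0\notin\mathcal{S}(\mathsf{A})$, which is not a hypothesis here and fails exactly in the degenerate examples (such as $A_1=\left(\begin{smallmatrix}0&1\\0&0\end{smallmatrix}\right)$, $A_2=\left(\begin{smallmatrix}0&0\\1&0\end{smallmatrix}\right)$) that the proposition is designed to handle. It is also telling that your route never genuinely uses the evenness of the exponent $2\ell$, whereas the paper's proof does: after reducing to $\ell=1$ by Kronecker powers and to the irreducible case by the block-triangularisation \eqref{eq:red} (your reduction here is essentially the paper's and is fine), the paper constructs an extremal \emph{inner-product} norm $\|\cdot\|_*$ with $\|A_i\|_*\leq\varrho_\infty(\mathsf{A})$ as a limit of the quadratic forms $\langle\cdot,\cdot\rangle_r$ --- this is where $p=2$ (hence $p=2\ell$) is essential --- and then applies the Lagarias--Wang finiteness theorem to produce the product $A_{x_n}\cdots A_{x_1}$ with $\rho(A_{x_n}\cdots A_{x_1})^{1/n}=\varrho_\infty(\mathsf{A})$. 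To repair your approach you would need a self-contained proof that the equilibrium state at the critical even exponent has zero entropy, not relying on Theorem \ref{th:entropy}; as written, that step is missing.
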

\begin{proof}
The trivial case $\varrho_\infty(\mathsf{A})=0$ is ignored. We first claim that without loss of generality one may take $\ell=1$. If $\ell>1$ then we may define $\mathsf{A}^{\otimes \ell}:=(A^{\otimes \ell}_1,\ldots,A^{\otimes \ell}_M) \in M_{ d^\ell}(\mathbb{R})^M$ to be the $M$-tuple of $\ell^{\mathrm{th}}$ Kronecker powers of the matrices $A_i$ (see \cite[\S4.2]{HoJo94} for definition and basic properties). Since $\|A^{\otimes \ell}\|=\|A\|^\ell$ and $A^{\otimes \ell}B^{\otimes \ell}=(AB)^{\otimes \ell}$ for every $A,B \in M_d(\mathbb{R})$ it follows by direct calculation that $\varrho_\infty(\mathsf{A}^{\otimes \ell})=\varrho_\infty(\mathsf{A})^\ell$ and $\varrho_2(\mathsf{A}^{\otimes \ell})=\varrho_{2\ell}(\mathsf{A})^{\ell}$, so we have $\varrho_2(\mathsf{A}^{\otimes \ell})=\varrho_\infty(\mathsf{A}^{\otimes \ell})$. The reader may easily see using Gelfand's formula that additionally $\rho(A^{\otimes \ell})=\rho(A)^\ell$ for every $A \in M_d(\mathbb{R})$, so if $(x_1,\ldots,x_n)$ is given such that $\varrho_\infty(\mathsf{A}^{\otimes \ell})=\rho(A_{x_n}^{\otimes \ell}\cdots A_{x_1}^{\otimes \ell})^{1/n}$ then clearly $\varrho_\infty(\mathsf{A})=\rho(A_{x_n}\cdots A_{x_1})^{1/n}$ as required.

We next claim that without loss of generality $\mathsf{A}$ may be taken to be irreducible. If $\mathsf{A}$ is not irreducible let us write $\mathsf{A}$ in the form \eqref{eq:red}; since $\rho(A_{x_n}\cdots A_{x_1})=\max_{1 \leq i \leq k}\rho(A_{x_n}^{(i,i)}\cdots A_{x_1}^{(i,i)})^{\frac{1}{n}}$ for every $(x_1,\ldots,x_n)\in\{1,\ldots,M\}^n$, it follows via \eqref{eq:bwf} that $\varrho_\infty(\mathsf{A})=\max_{1 \leq i \leq k}\varrho_\infty(\mathsf{A}^{(i)})$. Taking $i$ which achieves this maximum we find that
\[\varrho_\infty(\mathsf{A})=\varrho_\infty(\mathsf{A}^{(i)}) \leq \varrho_2(\mathsf{A}^{(i)}) \leq \varrho_2(\mathsf{A})=\varrho_\infty(\mathsf{A})\]
by appealing to the maximality of $i$, the definitions of $\varrho_\infty$ and $\varrho_2$, and the hypothesis of the proposition. We therefore have $\varrho_\infty(\mathsf{A}^{(i)})=\varrho_2(\mathsf{A}^{(i)})$ with $\mathsf{A}^{(i)}$ irreducible. If $\rho(A_{x_n}^{(i,i)}\cdots A_{x_1}^{(i,i)})^{1/n}=\varrho_\infty(\mathsf{A}^{(i)})$ then clearly
\[\varrho_\infty(\mathsf{A}) \geq \rho(A_{x_n}\cdots A_{x_1})^{1/n} \geq \rho(A_{x_n}^{(i,i)}\cdots A_{x_1}^{(i,i)})^{1/n}=\varrho_\infty(\mathsf{A}^{(i)})=\varrho_\infty(\mathsf{A})\]
and so if the conclusion of the proposition holds for the irreducible tuple $\mathsf{A}^{(i)}$ then it necessarily holds for $\mathsf{A}$.

To complete the proof it remains to establish the proposition in the case where $\ell=1$ and $\mathsf{A}$ is irreducible, for which we use an argument suggested by \cite[\S5]{Pr97} and \cite[Theorem 2]{Gu96}. By a theorem of J. Lagarias and Y. Wang (\cite[Theorem 5.1]{LaWa95}) it is sufficient to construct an inner product norm $\|\cdot\|_*$ on $\mathbb{R}^d$ such that $\|A_i\|_* \leq \varrho_\infty(\mathsf{A})$ for every $i=1,\ldots,M$. This construction comprises the remainder of the proof. Let $\langle\cdot,\cdot\rangle$ denote the Euclidean inner product on $\mathbb{R}^d$. For each integer $r \geq 1$ let us define a positive-definite symmetric bilinear form $\langle \cdot, \cdot \rangle_r$ on $\mathbb{R}^d$ by
\[\langle u,v\rangle_r:= \langle u,v\rangle + \sum_{n=1}^\infty \left(\varrho_2(\mathsf{A})^2+\frac{1}{r}\right)^{-n} \sum_{i_1,\ldots,i_n=1}^M  \langle A_{i_n}\cdots A_{i_1}u,A_{i_n}\cdots A_{i_1}v\rangle,\]
which converges by the Cauchy-Schwarz inequality together with the definition of $\varrho_2(\mathsf{A})$. Let $\|\cdot\|_r$ be the inner product norm induced by $\langle \cdot,\cdot \rangle_r$. It is easily verified that for every $i=1,\ldots,M$ and $r \geq 1$,
\[\sum_{i=1}^M \| A_iv\|^2_r \leq \left(\varrho_2(\mathsf{A})^2+\frac{1}{r}\right)\|v\|^2_r \]
for every $v \in \mathbb{R}^d$. Let us normalise each $\langle \cdot, \cdot \rangle_r$ so that $\max\{\|v\|_r \colon \|v\|\leq 1\}=1$. By the Cauchy-Schwarz inequality each of the forms $\langle \cdot,\cdot \rangle_r$ is then $K$-Lipschitz in each variable with respect to the Euclidean distance on the Euclidean $K$-ball about the origin. By the Arzel\'a-Ascoli theorem we may thus choose a subsequence $(r_j)_{j=1}^\infty$ such that $\langle \cdot, \cdot \rangle_{r_j}$ converges uniformly on compact subsets of $(\mathbb{R}^d)^2$ to a positive semidefinite bilinear form $\langle \cdot,\cdot \rangle_*$ such that $\max\{\|v\|_*\colon \|v\|\leq 1\}=1$ and $\sum_{i=1}^M \|A_iv\|^2_*\leq \varrho_2(\mathsf{A})^2\|v\|^2_*$ for every $v \in \mathbb{R}^d$.

We claim that $\langle\cdot,\cdot\rangle_*$ is positive definite, which is to say that $\|\cdot\|_*$ is a norm and not merely a seminorm. If $\|\cdot\|_*$ is zero on a nontrivial linear subspace $U$ of $\mathbb{R}^d$ then the inequality $\sum_{i=1}^M \|A_iv\|^2_*\leq \varrho_2(\mathsf{A})^2\|v\|^2_*$ implies that $A_iU \subseteq U$ for every $i=1,\ldots,M$; but by irreducibility this implies $U=\mathbb{R}^d$, which contradicts  $\max\{\|v\|_*\colon \|v\|\leq 1\}=1$. We conclude that $\|\cdot\|_*$ is an inner product norm on $\mathbb{R}^d$. We have
\begin{align*}\max_{1 \leq i \leq M}\|A_i\|_* &= \max_{\|v\|_*=1} \max_{1 \leq i \leq M}\|A_iv\|_* \\
&\leq \max_{\|v\|_*=1}\left(\sum_{i=1}^M \|A_iv\|_*^2\right)^{\frac{1}{2}}\\
&=\max_{\|v\|_*=1} \varrho_2(\mathsf{A})\|v\|_* = \varrho_\infty(\mathsf{A})\end{align*}
so that $\|A_i\|_*\leq \varrho_\infty(\mathsf{A})$ for every $i=1,\ldots,M$, and since $\|\cdot\|_*$ is an inner product norm we may apply \cite[Theorem 5.1]{LaWa95} to establish the existence of the desired product $A_{x_n}\cdots A_{x_1}$.
\end{proof}

\begin{proof}[Proof of Theorem \ref{th:entropy}]
The implication (ii)$\implies$(i) is trivial, so we shall begin by establishing (i)$\implies$(ii). Let us therefore assume that the equilibrium measure $\mu$ of $(\mathsf{A},s)$ satisfies (i), and let $2\ell >s$ be an even integer. We claim that $\mu$ is also the equilibrium state of $(\mathsf{A},2\ell)$. Indeed, we have
\begin{align*}\sup_{\nu \in \mathcal{M}_\sigma } h(\nu)+2\ell\Lambda(\mathsf{A},\nu)& =\frac{2\ell}{s}\left(\sup_{\nu \in \mathcal{M}_\sigma } \frac{s}{2\ell}h(\nu)+s\Lambda(\mathsf{A},\nu) \right) \\
&\leq \frac{2\ell}{s}\left(\sup_{\nu \in \mathcal{M}_\sigma } h(\nu)+s\Lambda(\mathsf{A},\nu) \right) \\
&=\frac{2\ell}{s}\left(h(\mu)+s\Lambda(\mathsf{A},\mu) \right) \\
&=h(\mu)+2\ell\Lambda(\mathsf{A},\mu)\end{align*}
using the fact that $s/2\ell<1$, the fact that $\mu$ is an equilibrium state for $(\mathsf{A},s)$, and finally the fact that $h(\mu)=0$. This inequality demonstrates that $\mu$ is the equilibrium state of $(\mathsf{A},2\ell)$ as claimed. 

Now, comparing \eqref{eq:pressure} and \eqref{eq:pradius} we have
\[\varrho_{2\ell}(\mathsf{A})= \lim_{n \to \infty}\left(\sum_{i_1,\ldots,i_n=1}^M\|A_{i_1}\cdots A_{i_n}\|^{2\ell}\right)^{\frac{1}{2n\ell}}=e^{P(\mathsf{A},2\ell)/2\ell},\]
and since $P(\mathsf{A},2\ell)=h(\mu)+2\ell\Lambda(\mathsf{A},\mu)=2\ell\Lambda(\mathsf{A},\mu)$ we have $\varrho_{2\ell}(\mathsf{A})= e^{\Lambda(\mathsf{A},\mu)}$. By \cite[Proposition 2.2]{Mo13} on the other hand we have
\[\log\varrho_\infty(\mathsf{A})=\sup_{\nu \in \mathcal{M}_\sigma} \Lambda(\mathsf{A},\nu),\]
and since
\begin{align*}\sup_{\nu \in \mathcal{M}_\sigma} \Lambda(\mathsf{A},\nu) &\leq \sup_{\nu \in \mathcal{M}_\sigma}(2\ell)^{-1}h(\nu)+ \Lambda(\mathsf{A},\nu)\\
&=(2\ell)^{-1}P(\mathsf{A},2\ell)\\
&=\Lambda(\mathsf{A},\mu)\leq \sup_{\nu \in \mathcal{M}_\sigma} \Lambda(\mathsf{A},\nu)\end{align*}
it follows that $\log \varrho_\infty(\mathsf{A})=\Lambda(\mathsf{A},\mu)$. We therefore have $\varrho_{2\ell}(\mathsf{A})=\varrho_\infty(\mathsf{A})$, and by Proposition \ref{pr:reduck}
there exist an integer $n \geq 1$ and finite sequence $(x_1,\ldots,x_n) \in \{1,\ldots,M\}^n$ such that $\varrho_\infty(\mathsf{A})=\rho(A_{x_n}\cdots A_{x_1})^{\frac{1}{n}}$. Define $z \in \Sigma_M$ by $z_{qn+r}=x_r$ for every $q \geq 1$ and $r \in \{1,\ldots,n\}$ so that $\sigma^nz=z$, and let $\mu':=\frac{1}{n}\sum_{i=0}^{n-1}\delta_{\sigma^iz} \in \mathcal{M}_{\sigma} $. We have
\[\Lambda(\mathsf{A},\mu')=\lim_{m \to \infty} \frac{1}{m} \int \log \|A_{y_n}\cdots A_{y_1}\|d\mu'(y) = \frac{1}{n}\log\rho(A_{x_n}\cdots A_{x_1})\]
using Gelfand's formula, so
\[\Lambda(\mathsf{A},\mu')=\frac{1}{n}\log \rho(A_{x_n}\cdots A_{x_1}) = \log\varrho_\infty(\mathsf{A}) =\Lambda(\mathsf{A},\mu).\]
Hence
\[h(\mu')+s\Lambda(\mathsf{A},\mu')=s\Lambda(\mathsf{A},\mu')= s\Lambda(\mathsf{A},\mu) = h(\mu)+s\Lambda(\mathsf{A},\mu)=P(\mathsf{A},s)\]
so that $\mu'$ is an equilibrium state of $(\mathsf{A},s)$. Since only one such equilibrium state exists we have $\mu'=\mu$. We conclude that $\mu$ is supported on a periodic orbit as claimed, and this completes the proof of the equivalence of (i) and (ii).

To prove (iii)$\implies$(ii) we argue as follows. Let $\omega_1,\ldots,\omega_n$ be as in (iii) and let $z \in \Sigma_M$ be defined by $z_{qn+r}:=\omega_r$ for every $q \geq1$ and $0\leq r<n$. Clearly $\sigma^nz=z$. It is straightforward to see that if $y \notin \{z,\sigma z,\ldots,\sigma^{n-1}z\}$, then for large enough $m$ the finite sequence $(y_1,\ldots,y_m)$ contains a finite subword $(y_k,y_{k+1},\ldots,y_{k+n-1})$ which is not equal to a cyclic permutation of $(\omega_1,\ldots,\omega_n)$. In particular we have $A_{y_{k+n-1}}\cdots A_{y_k}=0$ using (iii), and therefore $A_{y_m}\cdots A_{y_1}=0$. It follows from the Gibbs inequality \eqref{eq:Gibbs} that $\mu([y_1\cdots y_m])=0$, and since $[y_1\cdots y_m]$ is an open neighbourhood of $y$, the point $y$ is not in the support of $\mu$. Since $y \notin \{z,\sigma z,\ldots,\sigma^{n-1}z\}$ was arbitrary it follows that the support of $\mu$ is a subset of $\{z,\sigma z\ldots,\sigma^{n-1}z\}$, and $\mu$ is supported on a periodic orbit as claimed. This completes the proof of the implication (iii)$\implies$(ii). 

It remains only to prove (ii)$\implies$(iii). Suppose that $\omega=\sigma^n\omega \in \Sigma_M$ is periodic with period $n$ and that $\mu=\frac{1}{n}\sum_{i=0}^{n-1}\delta_{\sigma^i\omega}$ is the unique equilibrium state of the irreducible matrices $A_1,\ldots,A_M$. Since $\mu([x_1\cdots x_n])\neq 0$ if and only if $(x_1,\ldots,x_n)=(\omega_{i-1}\cdots \omega_1\omega_n\cdots \omega_i)$ for some $i=1,\ldots,n$, it follows from the Gibbs property that $\|A_{x_n}\cdots A_{x_1}\|\neq 0$ if and only if the same condition holds. Since $\mu$ is an equilibrium state its top Lyapunov exponent cannot equal $-\infty$, so we have $\rho(A_{\omega_n}\cdots A_{\omega_1})>0$.

We recall that if $A \colon \mathbb{R}^d \to \mathbb{R}^d$ is a linear map and $v \in \mathbb{R}^d$, then $A^kv=0$ for some $k \geq 1$ if and only if $A^dv=0$. In particular $A^d\mathbb{R}^d$ is an $A$-invariant subspace of $\mathbb{R}^d$ on which $A$ acts bijectively. Define $r$ to be the rank of the product $(A_{\omega_n}\cdots A_{\omega_1})^d$. Since this product has nonzero spectral radius, this rank is nonzero, and since the rank of a product $AB$ is always equal to the rank of the product $BA$, the rank of every product $(A_{\omega_{i-1}}\cdots A_{\omega_1}A_{\omega_n}\cdots A_{\omega_i})^d$ is also equal to $r$. For each $i=1,\ldots,n$ define $R_i:=(A_{\omega_{i-1}}\cdots A_{\omega_1}A_{\omega_n}\cdots A_{\omega_i})^d\mathbb{R}^d$ and note that each $R_i$ has dimension $r$ and that $A_{\omega_{i-1}}\cdots A_{\omega_1}A_{\omega_n}\cdots A_{\omega_i}$ is a bijection from $R_i$ to itself. It is clear from the definition that $A_{\omega_i}R_i\subseteq R_{i+1}$ when $1 \leq i <n$ and that $A_{\omega_n}R_n\subseteq R_1$, and since $A_{\omega_{i-1}}\cdots A_{\omega_1}A_{\omega_n}\cdots A_{\omega_i}$ is a bijection from $R_i$ to itself we must in fact have $A_{\omega_i}R_i= R_{i+1}$ when $1 \leq i <n$ and $A_{\omega_n}R_n= R_1$.

Let $1 \leq j \leq M$ and $1 \leq i \leq n$. If $j \neq \omega_i$ then $A_j A_{\omega_{i-1}}\cdots A_{\omega_1}A_{\omega_n}\cdots A_{\omega_i}$ is zero, and therefore $A_jR_i=\{0\}$. If $j=\omega_i$ then we of course have $A_jR_i=R_{i+1}$ if $i<n$, or $A_jR_i=R_1$ otherwise. For each $k=1,\ldots,n$ let $V_k$ denote the span of the union of $R_1,\ldots,R_k$. We observe in particular that $A_jV_n \subseteq V_n$ for every $j=1,\ldots,M$, and since $V_n$ contains the $r$-dimensional space $R_1$, it follows that the dimension of $V_n$ is nonzero so that by irreducibility $V_n=\mathbb{R}^d$.

We claim that $\dim V_k = kr$ for every $k=1,\ldots,n$. For $k=1$ this is clear. Given that $\dim V_k=kr$, to show that $\dim V_{k+1}=(k+1)r$ it suffices to show that $V_k \cap R_{k+1}=0$. When $i \neq j$ the product
\[(A_{\omega_{i-1}}\cdots A_{\omega_1}A_{\omega_n}\cdots A_{\omega_i})(A_{\omega_{j-1}}\cdots A_{\omega_1}A_{\omega_n}\cdots A_{\omega_{j}})\]
is zero, so $A_{\omega_{i-1}}\cdots A_{\omega_1}A_{\omega_n}\cdots A_{\omega_i}R_j=\{0\}$ when $i \neq j$ by the definition of $R_j$. We also of course have $A_{\omega_{k}}\cdots A_{\omega_1} A_{\omega_n}\cdots A_{\omega_{k+1}}R_{k+1}=R_{k+1}$. It follows from these observations that the product $A_{\omega_{k}}\cdots A_{\omega_1} A_{\omega_n}\cdots A_{\omega_{k+1}}$ is identically zero on $V_k$ and has trivial kernel on $R_{k+1}$. We deduce that $V_k$ contains only the trivial element of $R_{k+1}$ and therefore $V_k \cap R_{k+1}=\{0\}$ as required. We conclude by induction that $d=\dim V_n =nr$, and it follows that $\mathbb{R}^d = \bigoplus_{i=1}^n R_i$ as claimed. This completes the proof of (ii)$\implies$(iii) and hence completes the proof of the theorem.\end{proof}


\section{Characterisation by equilibrium states}\label{se:char}

In this section we investigate when $(\mathsf{A},s)$ and $(\mathsf{B},t)$ can have the same equilibrium state when either $\mathsf{A}$ is distinct from $\mathsf{B}$, $s$ is distinct from $t$, or both.

Clearly if two $M$-tuples $\mathsf{A},\mathsf{B} \in M_d(\mathbb{R})^M$ are conjugate to one another by a simultaneous change-of-basis transformation then for each $s>0$ the equilibrium state of $(\mathsf{A},s)$ equals that of $(\mathsf{B},s)$. The converse is easily seen to be false, since two finite irreducible subsets of $O(2)$ may fail to be conjugate in this fashion, but both will have the same equilibrium states since all of their respective products will have norm $1$. Nonetheless the following result shows that each equilibrium state of a tuple $\mathsf{A}$ completely determines, and is determined by, the spectral radius of every product of the elements of $\mathsf{A}$, and moreover contains enough information to determine completely the top Lyapunov exponent of $\mathsf{A}$ with respect to all invariant measures. 

\begin{theorem}\label{th:char}
Let $\mathsf{A}=(A_1,\ldots,A_M) \in M_{d_1}(\mathbb{R})^M$, $\mathsf{B}=(B_1,\ldots,B_M) \in M_{d_2}(\mathbb{R})^M$ be irreducible, where $d_1,d_2 \geq 1$. Let $s,t>0$, and suppose that $P(\mathsf{A},s),P(\mathsf{B},t)>-\infty$. Then the following are equivalent:
\begin{enumerate}[(i)]
\item
The equilibrium states of $(\mathsf{A},s)$ and $(\mathsf{B},t)$ are identical.
\item
There is a constant $C>0$ such that
\[C^{-1}e^{-n\mathsf{P}(\mathsf{A},s)}\|A_{i_n}\cdots A_{i_1}\|^s \leq e^{-n\mathsf{P}(\mathsf{B},t)}\|B_{i_n}\cdots B_{i_1}\|^t \leq Ce^{-n\mathsf{P}(\mathsf{A},s)}\|A_{i_n}\cdots A_{i_1}\|^s\]
for every $(i_1,\ldots,i_n) \in \{1,\ldots,M\}^n$.
\item
We have 
\[e^{-n\mathsf{P}(\mathsf{A},s)}\rho(A_{i_n}\cdots A_{i_1})^s =e^{-n\mathsf{P}(\mathsf{B},t)}\rho(B_{i_n}\cdots B_{i_1})^t\]
for every $(i_1,\ldots,i_n) \in \{1,\ldots,M\}^n$.
\item
There is a constant $\lambda \in \mathbb{R}$ such that  $s\Lambda(\mathsf{A},\nu)=\lambda+t\Lambda(\mathsf{B},\nu)$ for every $\nu \in \mathcal{M}_\sigma $.
\end{enumerate}
\end{theorem}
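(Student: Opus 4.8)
The plan is to prove the cycle (i)$\implies$(ii)$\implies$(iii)$\implies$(iv)$\implies$(i), in which the first and last implications are soft and essentially all of the content sits in (iii)$\implies$(iv). Throughout write $A_w:=A_{i_n}\cdots A_{i_1}$ and $B_w:=B_{i_n}\cdots B_{i_1}$ for a word $w=(i_1,\dots,i_n)$, and let $\mu_{\mathsf{A}}$ and $\mu_{\mathsf{B}}$ denote the equilibrium states of $(\mathsf{A},s)$ and $(\mathsf{B},t)$; these exist and are unique by Theorem~\ref{th:Gibbs} (or by classical thermodynamic formalism when a dimension equals $1$), and they are ergodic, a unique equilibrium state being necessarily an extreme point of $\mathcal{M}_\sigma$. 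For (i)$\iff$(ii): applying the Gibbs inequality~\eqref{eq:Gibbs} to both potentials, condition (ii) is equivalent to the two-sided comparability of $\mu_{\mathsf{A}}([w])$ and $\mu_{\mathsf{B}}([w])$ over all finite words $w$, which by a routine approximation extends to all Borel sets; hence $\mu_{\mathsf{A}}\ll\mu_{\mathsf{B}}$, and since an ergodic measure which is absolutely continuous with respect to another ergodic measure must coincide with it, $\mu_{\mathsf{A}}=\mu_{\mathsf{B}}$, which is (i). Conversely (i) yields (ii) at once from the two Gibbs inequalities.

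The remaining soft steps: (iv)$\implies$(i) holds because (iv) forces the functionals $\nu\mapsto h(\nu)+s\Lambda(\mathsf{A},\nu)$ and $\nu\mapsto h(\nu)+t\Lambda(\mathsf{B},\nu)$ to differ by the constant $\lambda$ everywhere on $\mathcal{M}_\sigma$, so that $P(\mathsf{A},s)=\lambda+P(\mathsf{B},t)$ and the two functionals attain their suprema at exactly the same measures; uniqueness of the equilibrium state then gives $\mu_{\mathsf{A}}=\mu_{\mathsf{B}}$. And (ii)$\implies$(iii) follows by applying the inequality in (ii) to the word $w$ repeated $k$ times, using $A_{w^{k}}=A_w^{\,k}$ and $B_{w^{k}}=B_w^{\,k}$, taking $(k|w|)$-th roots and letting $k\to\infty$: by Gelfand's formula the norms become spectral radii and the multiplicative constant disappears.

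The content is in (iii)$\implies$(iv). Taking logarithms in (iii) gives $\rho(A_w)^{s}=e^{|w|\lambda}\rho(B_w)^{t}$ for every word $w$, where $\lambda:=P(\mathsf{A},s)-P(\mathsf{B},t)$. Hence, for any continuous $g\colon\Sigma_M\to\mathbb{R}$ and writing $g_n(w):=\sup_{x\in[w]}\sum_{i=0}^{n-1}g(\sigma^{i}x)$ for $|w|=n$,
\[\limsup_{n\to\infty}\frac{1}{n}\log\sum_{|w|=n}e^{g_n(w)}\rho(A_w)^{s}=\lambda+\limsup_{n\to\infty}\frac{1}{n}\log\sum_{|w|=n}e^{g_n(w)}\rho(B_w)^{t}.\]
The plan is to identify each side's limit superior with the corresponding subadditive pressure $\mathcal{P}_{\mathsf{A}}(g):=\sup_{\nu\in\mathcal{M}_\sigma}h(\nu)+s\Lambda(\mathsf{A},\nu)+\int g\,d\nu$ and $\mathcal{P}_{\mathsf{B}}(g):=\sup_{\nu\in\mathcal{M}_\sigma}h(\nu)+t\Lambda(\mathsf{B},\nu)+\int g\,d\nu$ (the weighted form of Theorem~\ref{th:subvar}), thereby obtaining $\mathcal{P}_{\mathsf{A}}(g)=\lambda+\mathcal{P}_{\mathsf{B}}(g)$ for every continuous $g$. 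Granting this, note that $\nu\mapsto h(\nu)+s\Lambda(\mathsf{A},\nu)$ and $\nu\mapsto h(\nu)+t\Lambda(\mathsf{B},\nu)+\lambda$ are concave and upper semicontinuous on the compact convex set $\mathcal{M}_\sigma$ (entropy and the top Lyapunov exponent each being affine and upper semicontinuous), and that the displayed identity says exactly that these two functions have the same concave conjugate; by Fenchel--Moreau biconjugation the two functions themselves coincide, which after cancelling $h(\nu)$ is precisely (iv) with $\lambda=P(\mathsf{A},s)-P(\mathsf{B},t)$. This closes the cycle via (iv)$\implies$(i).

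The one genuinely nontrivial point is the identification of the spectral-radius sums with the pressures. One inequality is automatic: $\rho(A_w)\le\|A_w\|$ gives $\limsup_{n}\frac{1}{n}\log\sum_{|w|=n}e^{g_n(w)}\rho(A_w)^{s}\le\limsup_{n}\frac{1}{n}\log\sum_{|w|=n}e^{g_n(w)}\|A_w\|^{s}=\mathcal{P}_{\mathsf{A}}(g)$. The reverse inequality is where irreducibility enters, via the quasi-multiplicativity of irreducible tuples --- the existence of a finite set $W$ of words and a constant $c>0$ such that for all words $v,w$ there is $u\in W$ with $\|A_vA_uA_w\|\ge c\|A_v\|\,\|A_w\|$ --- which in turn rests on the fact that $\mathcal{S}(\mathsf{A})$ acts with a uniform lower bound against any prescribed pair of directions on the unit sphere (a spanning argument for the $\mathcal{S}(\mathsf{A})$-orbit of a direction, followed by a compactness argument over the sphere). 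A periodic-point lower bound for the subadditive pressure of a matrix potential that is quasi-multiplicative in this sense is available in the work of Feng and of Feng--K\"aenm\"aki, and I would quote a suitable form of it rather than reprove it. I expect this identification, together with the routine bookkeeping surrounding it, to be the bulk of the argument; the rest reduces to standard manipulation of the Gibbs inequality, Gelfand's formula, and the variational principle of Theorem~\ref{th:subvar}.
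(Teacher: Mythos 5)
Your treatment of (i)$\Leftrightarrow$(ii), (ii)$\implies$(iii) and (iv)$\implies$(i) matches the paper's: the Gibbs inequality \eqref{eq:Gibbs} gives (i)$\implies$(ii) (and your converse via absolute continuity plus ergodicity is correct, if redundant), Gelfand's formula applied to powers of a fixed word gives (ii)$\implies$(iii), and the variational principle of Theorem \ref{th:subvar} together with uniqueness gives (iv)$\implies$(i). The divergence, and the problem, is in (iii)$\implies$(iv). The paper's argument is short: setting $\lambda=P(\mathsf{A},s)-P(\mathsf{B},t)$, it invokes \cite[Theorem 1.6]{Mo12}, which together with the subadditive ergodic theorem yields, for every ergodic $\nu$ and $\nu$-a.e.\ $x$, $\Lambda(\mathsf{A},\nu)=\limsup_{n\to\infty}\frac1n\log\rho(A_{x_n}\cdots A_{x_1})$ (and likewise for $\mathsf{B}$); the word-by-word identity in (iii) then transfers immediately to $s\Lambda(\mathsf{A},\nu)=\lambda+t\Lambda(\mathsf{B},\nu)$ for ergodic $\nu$, and ergodic decomposition finishes. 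No weighted pressure, quasi-multiplicativity, or convex duality enters at all.

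Your route instead channels everything through the claim that for \emph{every} continuous $g$ one has $\limsup_{n\to\infty}\frac1n\log\sum_{|w|=n}e^{g_n(w)}\rho(A_w)^s=\mathcal{P}_{\mathsf{A}}(g)$, followed by Fenchel--Moreau biconjugation. The duality step itself is sound (it is the classical recovery of an affine upper semicontinuous entropy-type functional from its pressure function), but the displayed identification is precisely where all the content lies, and you do not prove it: the results of Feng and Feng--K\"aenm\"aki you propose to quote give the spectral-radius/periodic-orbit approximation of the subadditive pressure in the unweighted case (or for specific multifractal-type weights), not for an arbitrary continuous weight $g$. In the generality you need it must be argued: quasi-multiplicativity (obtainable from Lemma \ref{le:ire}) lets you splice bounded-length bridging words and pass from norms to spectral radii via Gelfand's formula, but you must also control the change in the weight $g_n$ under the insertion and under replacing a cylinder supremum by the Birkhoff sum at the resulting periodic point, which costs an $o(n)$ error via uniform continuity of $g$; none of this bookkeeping appears in your sketch. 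So as written the proposal over-claims its pivotal lemma as quotable and defers essentially all of the work; it is fillable, but it is also substantially heavier than the pointwise argument the paper uses, which in addition does not even require irreducibility at this step.
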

\begin{proof}
The implication (iv)$\implies$(i) is a trivial consequence of the definition of equilibrium state given after Theorem \ref{th:subvar}; the implication (i)$\implies$(ii) is a simple corollary of the Gibbs inequality; and the implication (ii)$\implies$(iii) follows from Gelfand's formula. To prove (iii)$\implies$(iv) we argue as follows. Define $\lambda:=P(\mathsf{A},s)-P(\mathsf{B},t)$ so that
\begin{equation}\label{eq:whatever}\frac{s}{n}\log \rho(A_{i_n}\cdots A_{i_1}) = \lambda + \frac{t}{n}\log \rho(B_{i_n}\cdots B_{i_1}) \end{equation}
for every $(i_1,\ldots,i_n) \in \{1,\ldots,M\}^n$. If $\nu \in \mathcal{M}_\sigma $ is ergodic then by \cite[Theorem 1.6]{Mo12} together with the subadditive ergodic theorem we have for $\mu$-a.e. $x \in \Sigma_M$
\[\Lambda(\mathsf{A},\nu)=\lim_{n \to \infty} \frac{1}{n}\log\|A_{x_n}\cdots A_{x_1}\|=\limsup_{n\to\infty} \frac{1}{n}\log\rho(A_{x_n}\cdots A_{x_1})\]
and 
\[\Lambda(\mathsf{B},\nu)=\lim_{n \to \infty} \frac{1}{n}\log\|B_{x_n}\cdots B_{x_1}\|=\limsup_{n\to\infty} \frac{1}{n}\log\rho(B_{x_n}\cdots B_{x_1}).\]
In view of \eqref{eq:whatever} we deduce that $s\Lambda(\mathsf{A},\nu) = \lambda + t\Lambda(\mathsf{B},\nu)$. We conclude that (iv) is satisfied for all ergodic measures $\nu$, and the general case follows by a standard ergodic decomposition argument.
\end{proof}
While Theorem \ref{th:char} gives insight into when $(\mathsf{A},s)$ and $(\mathsf{B},s)$ can have the same equilibrium state, its implications for the comparison between the equilibrium states of $(\mathsf{A},s_1)$ and $(\mathsf{A},s_2)$ are at least as interesting. We note the following:
\begin{theorem}\label{th:cks}
Let $\mathsf{A}=(A_1,\ldots,A_M) \in M_{d}(\mathbb{R})^M$ be irreducible with $d \geq 2$. Then the following are equivalent:
\begin{enumerate}[(i)]
\item
There exist distinct $s_1,s_2>0$ such that the equilibrium states of $(\mathsf{A},s_1)$ and $(\mathsf{A},s_2)$ are identical.
\item
There is a number $\lambda \in \mathbb{R}$ such that every element of $\mathcal{S}(e^{-\lambda}\mathsf{A})$ has spectral radius either $1$ or $0$.
\item
There exist $a \geq 0$, $b \in \mathbb{R}$ such that $P(\mathsf{A},s)=a+bs$ for all $s>0$.
\item
The equilibrium state of $(\mathsf{A},s)$ is the same for every $s>0$.
\end{enumerate}
\end{theorem}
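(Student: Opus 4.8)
The plan is to prove the cyclic chain of implications (i)$\implies$(ii)$\implies$(iii)$\implies$(iv)$\implies$(i). The implication (iv)$\implies$(i) is immediate, since if the equilibrium state of $(\mathsf{A},s)$ is the same for every $s>0$ then any two distinct values $s_1\neq s_2$ witness (i). For (i)$\implies$(ii) I would apply Theorem \ref{th:char} with $\mathsf{B}=\mathsf{A}$, $s=s_1$, $t=s_2$; its hypotheses hold because $\mathsf{A}$ is irreducible with $d\geq 2$, so $P(\mathsf{A},s_1),P(\mathsf{A},s_2)>-\infty$. Equality of the two equilibrium states gives, via condition (iii) of that theorem,
\[e^{-nP(\mathsf{A},s_1)}\rho(A_{i_n}\cdots A_{i_1})^{s_1}=e^{-nP(\mathsf{A},s_2)}\rho(A_{i_n}\cdots A_{i_1})^{s_2}\]
for every $(i_1,\ldots,i_n)\in\{1,\ldots,M\}^n$. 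Setting $\lambda:=\bigl(P(\mathsf{A},s_1)-P(\mathsf{A},s_2)\bigr)/(s_1-s_2)$ and rearranging, each product $A_{i_n}\cdots A_{i_1}$ has spectral radius $0$ or $e^{n\lambda}$, and hence every element of $\mathcal{S}(e^{-\lambda}\mathsf{A})$ has spectral radius $0$ or $1$; this is (ii).

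For (iii)$\implies$(iv), suppose $P(\mathsf{A},s)=a+bs$ for all $s>0$. Fix $s_0>0$ and let $\mu$ be the unique equilibrium state of $(\mathsf{A},s_0)$ given by Theorem \ref{th:Gibbs}. By Theorem \ref{th:subvar}, the affine function $u\mapsto h(\mu)+u\Lambda(\mathsf{A},\mu)$ never exceeds $u\mapsto a+bu$ on $(0,\infty)$ and agrees with it at the interior point $u=s_0$; an affine function dominated by another on an interval but meeting it at an interior point must coincide with it, so $\Lambda(\mathsf{A},\mu)=b$ and $h(\mu)=a$. Hence $h(\mu)+s\Lambda(\mathsf{A},\mu)=a+bs=P(\mathsf{A},s)$ for every $s>0$, so $\mu$ is an equilibrium state of $(\mathsf{A},s)$ for all $s$, and by the uniqueness clause of Theorem \ref{th:Gibbs} it is the equilibrium state of $(\mathsf{A},s)$ for all $s$. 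This is (iv).

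The substantive step, and the one I expect to require the real work, is (ii)$\implies$(iii). Given $\lambda$ as in (ii), set $\mathsf{A}':=e^{-\lambda}\mathsf{A}$, so that every element of $\mathcal{S}(\mathsf{A}')$ has spectral radius $0$ or $1$ and $P(\mathsf{A},s)=\lambda s+P(\mathsf{A}',s)$; it therefore suffices to show $P(\mathsf{A}',\cdot)$ is constant, since (iii) then holds with $b=\lambda$ and $a$ equal to that constant value, the inequality $a\geq 0$ emerging below. Since $\mathsf{A}'$ is irreducible with $d\geq 2$, for each $s>0$ the pressure $P(\mathsf{A}',s)$ is finite and $(\mathsf{A}',s)$ has, by Theorem \ref{th:Gibbs}, a unique equilibrium state $\mu_s$, which is ergodic. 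From $h(\mu_s)+s\Lambda(\mathsf{A}',\mu_s)=P(\mathsf{A}',s)>-\infty$ we get $\Lambda(\mathsf{A}',\mu_s)>-\infty$, and by \cite[Theorem 1.6]{Mo12} together with the subadditive ergodic theorem $\Lambda(\mathsf{A}',\mu_s)=\limsup_{n\to\infty}\frac{1}{n}\log\rho(A'_{x_n}\cdots A'_{x_1})$ for $\mu_s$-almost every $x$; since every spectral radius occurring here is $0$ or $1$, this limit superior is $0$ or $-\infty$, and therefore equals $0$. Consequently $P(\mathsf{A}',s)=h(\mu_s)$, and for any $t>0$ the subadditive variational principle gives $P(\mathsf{A}',t)\geq h(\mu_s)+t\Lambda(\mathsf{A}',\mu_s)=h(\mu_s)=P(\mathsf{A}',s)$. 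Interchanging $s$ and $t$ yields $P(\mathsf{A}',s)=P(\mathsf{A}',t)$ for all $s,t>0$, so $P(\mathsf{A}',\cdot)$ is constant, completing (ii)$\implies$(iii) and hence the cycle. The only genuinely nontrivial ingredient is the observation that the equilibrium state must have Lyapunov exponent $0$, which rests on feeding condition (ii) into the spectral-radius characterisation of $\Lambda$ from \cite{Mo12}; everything else is bookkeeping with the variational principle and a one-line fact about affine functions.
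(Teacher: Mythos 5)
Your proposal is correct, and three of its four implications follow the paper's own route: the cyclic scheme is the same, (iv)$\implies$(i) and (i)$\implies$(ii) via Theorem \ref{th:char} are identical, and your (ii)$\implies$(iii) uses the same key ingredient (the spectral-radius characterisation of the top Lyapunov exponent from \cite{Mo12} plus the subadditive ergodic theorem), differing only in packaging: you normalise to $\mathsf{A}'=e^{-\lambda}\mathsf{A}$ and deduce that $P(\mathsf{A}',\cdot)$ is constant from the fact that each equilibrium state $\mu_s$ has exponent $0$, whereas the paper shows $\Lambda(\mathsf{A},\nu)\in\{\lambda,-\infty\}$ for \emph{every} invariant $\nu$ and reads off $P(\mathsf{A},s)=s\lambda+\sup\{h(\nu)\colon \Lambda(\mathsf{A},\nu)\neq-\infty\}$ directly. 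The genuine divergence is in (iii)$\implies$(iv): the paper fixes $s_1,s_2$, uses affinity of the pressure to show that $\frac{1}{2}(\mu_1+\mu_2)$ is an equilibrium state at the midpoint parameter, and concludes $\mu_1=\mu_2$ from uniqueness together with ergodicity of that equilibrium state; you instead fix a single $s_0$ and use the touching-affine-functions observation to force $\Lambda(\mathsf{A},\mu)=b$ and $h(\mu)=a$, so that $\mu$ attains the supremum simultaneously for all $s$ and uniqueness finishes. Both are valid; your version is slightly more economical (it needs only uniqueness, not ergodicity, of equilibrium states, and it identifies the common equilibrium state as having entropy $a$ and exponent $b$), while the paper's midpoint argument isolates the reusable principle that affinity of $s\mapsto P(\mathsf{A},s)$ forces the equilibrium states at two parameters to coincide. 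The only point you leave implicit is that $\Lambda(\mathsf{A},\mu)>-\infty$, needed to speak of a real-valued affine function, but this is automatic because $h(\mu)+s_0\Lambda(\mathsf{A},\mu)=P(\mathsf{A},s_0)>-\infty$ by irreducibility, so there is no gap.
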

\begin{proof}
The implication (iv)$\implies$(i) is trivial. If (i) holds then by Theorem \ref{th:char} we have
\[-nP(\mathsf{A},s_1)+ s_1\log\rho(A_{x_n}\cdots A_{x_1})=-nP(\mathsf{A},s_2)+ s_2\log\rho(A_{x_n}\cdots A_{x_1})\]
for every $(x_1,\ldots,x_n)$, so that in each case either $\rho(A_{x_n}\cdots A_{x_1})=0$ or
\[(s_1-s_2)\log\rho(A_{x_n}\cdots A_{x_1}) =n(P(\mathsf{A},s_1)-P(\mathsf{A},s_2)).\]
Taking $\lambda:=(P(\mathsf{A},s_1)-P(\mathsf{A},s_2))/(s_1-s_2)$ it follows that $e^{-\lambda n}\rho(A_{x_n}\cdots A_{x_1}) \in \{0,1\}$ for every $(x_1,\ldots,x_n)$, and this proves (ii).

If (ii) holds then by an argument similar to the proof of Theorem \ref{th:char} it follows that for every $\nu \in \mathcal{M}_\sigma$ the top Lyapunov exponent $\Lambda(\mathsf{A},\nu)$ is either $\lambda$ or $-\infty$, and therefore
\[P(\mathsf{A},s)=s\lambda + \sup\{h(\nu)\colon \nu \in \mathcal{M}_\sigma \text{ and }\Lambda(\mathsf{A},\nu)\neq -\infty\}.\]
We note that the supremum is over a nonempty set since $P(\mathsf{A},s)>-\infty$ by irreducibility, and this establishes (iii).

If (iii) holds, let $s_1,s_2>0$ and let $\mu_1,\mu_2$ be the equilibrium states of $(\mathsf{A},s_1)$ and $(\mathsf{A},s_2)$ respectively. We have
\begin{align*}P\left(\mathsf{A},\frac{s_1+s_2}{2}\right)&=\frac{1}{2}P(\mathsf{A},s_1)+\frac{1}{2}P(\mathsf{A},s_2)\\
&=\frac{1}{2}\left(h(\mu_1)+s_1\Lambda(\mathsf{A},\mu_1)\right)+\frac{1}{2}\left(h(\mu_1)+s_2\Lambda(\mathsf{A},\mu_2)\right)\\
&=h\left(\frac{1}{2}\left(\mu_1+\mu_2\right)\right)+ \Lambda\left(\mathsf{A},\frac{1}{2}\left(\mu_1+\mu_2\right)\right)\end{align*}
so that $\frac{1}{2}(\mu_1+\mu_2)$ is an equilibrium state for $(\mathsf{A},\frac{s_1+s_2}{2})$. By Theorem \ref{th:Gibbs} the equilibrium state of $(\mathsf{A},\frac{s_1+s_2}{2})$ is unique and ergodic, so $\frac{1}{2}(\mu_1+\mu_2)$ is an ergodic measure, which is only possible if $\mu_1=\mu_2$. Since $s_1,s_2>0$ were arbitrary this proves (iv) and completes the proof of the theorem. 
\end{proof}
Without additional assumptions on the matrices $A_i$ we are unable to shed further light on exactly when the situation of Theorem \ref{th:cks} occurs, since the structure of matrix semigroups in which every element has spectral radius either $1$ or $0$ does not appear to have been investigated. It is also unclear in general exactly which measures can arise as the equilibrium state in this case. However, at the end of the following section we will see that when the matrices $A_i$ are invertible, the situation of Theorem \ref{th:cks} can be neatly characterised.


\section{Bernoulli measures and Lyapunov exponents }

We recall that a \emph{Bernoulli measure} on $\Sigma_M$ is a probability measure of the form $\nu^{\mathbb{N}}$ where $\nu$ is a probability measure on $\{1,\ldots,M\}$. 
The following result characterises those cases in which a matrix equilibrium state is a Bernoulli measure:
\begin{theorem}\label{th:be1}
Let $\mathsf{A}=(A_1,\ldots,A_M) \in M_d(\mathbb{R})^M$ be irreducible, let $s>0$, and let $\mu \in \mathcal{M}_\sigma $ be the unique equilibrium state of $(\mathsf{A},s)$. Then the measure $\mu$ is a Bernoulli measure if any only if for every $B_1,B_2 \in \mathcal{S}(A)$ we have $\rho(B_1B_2)=\rho(B_1)\rho(B_2)$.
\end{theorem}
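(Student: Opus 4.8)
The plan is to exploit the equilibrium-state rigidity already encoded in Theorem~\ref{th:char} together with the Gibbs property~\eqref{eq:Gibbs}, reducing the Bernoulli question to a multiplicativity statement about spectral radii. For one direction, suppose $\mu = \nu^{\mathbb{N}}$ is Bernoulli, say $\nu(\{i\}) = p_i$. Then $\mu([x_1\cdots x_n]) = \prod_{k=1}^n p_{x_k}$, so the Gibbs inequality~\eqref{eq:Gibbs} gives a two-sided comparison $\|A_{x_n}\cdots A_{x_1}\|^s \asymp e^{nP(\mathsf{A},s)}\prod_k p_{x_k}$ uniformly in the word. Feeding this into Gelfand's formula along periodic words, exactly as in the implication (ii)$\implies$(iii) of Theorem~\ref{th:char}, yields $\rho(A_{x_n}\cdots A_{x_1})^s = e^{nP(\mathsf{A},s)}\prod_{k=1}^n q_{x_k}$ for suitable constants $q_i := p_i e^{-P(\mathsf{A},s)}$ (one must argue that the subexponential Gibbs constant washes out in the $n$-th root limit, and that concatenating a periodic word with itself forces the product over one period to be exactly the geometric growth rate). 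Since $B_1, B_2 \in \mathcal{S}(\mathsf{A})$ are themselves products of the $A_i$, writing $B_1 = A_{u_m}\cdots A_{u_1}$ and $B_2 = A_{w_\ell}\cdots A_{w_1}$ we would get $\rho(B_1B_2)^s = e^{(m+\ell)P}\prod q_{u_i}\prod q_{w_j} = \rho(B_1)^s\rho(B_2)^s$ — but here one must be careful: the identity $\rho(A_{x_n}\cdots A_{x_1})^s = e^{nP}\prod q_{x_k}$ is what needs to be established for \emph{all} words, not just cyclically reduced ones, and for a word whose product has spectral radius zero the identity should still be read correctly (the right side is then forced to vanish, impossible if all $p_i>0$, so in the Bernoulli case all products have positive spectral radius — this uses full support, i.e. Proposition~\ref{th:sport}, since a Bernoulli measure with some $p_i = 0$ is really supported on a smaller alphabet and one should either rule this out or handle it separately).

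For the converse, suppose $\rho(B_1B_2) = \rho(B_1)\rho(B_2)$ for all $B_1, B_2 \in \mathcal{S}(\mathsf{A})$. Applying this inductively to the generators, $\rho(A_{x_n}\cdots A_{x_1}) = \prod_{k=1}^n \rho(A_{x_k})$ for every word, provided no $\rho(A_{x_k})$ vanishes; if some $\rho(A_i) = 0$ then $A_i$ is nilpotent and the relation forces $\rho(A_iB) = 0$ for all $B$, which (after checking it does not contradict irreducibility, e.g. via the Berger--Wang formula~\eqref{eq:bwf} forcing $\varrho_\infty$ to be controlled by the non-nilpotent generators) should be dealt with — I expect the clean statement is that the hypothesis forces every $\rho(A_i) > 0$, or the degenerate case is absorbed. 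Granting $\rho(A_{x_n}\cdots A_{x_1}) = \prod_k \rho(A_{x_k})$, define the Bernoulli measure $\nu^{\mathbb{N}}$ with weights proportional to $\rho(A_i)^s$, i.e. set $p_i := \rho(A_i)^s / (\sum_j \rho(A_j)^s)$. Then the candidate measure has $\nu^{\mathbb{N}}([x_1\cdots x_n]) = \prod_k p_{x_k} = \bigl(\sum_j \rho(A_j)^s\bigr)^{-n}\rho(A_{x_n}\cdots A_{x_1})^s$. One then computes the pressure: $P(\mathsf{A},s) = \log \sum_j \rho(A_j)^s$ should follow from~\eqref{eq:pressure} once one knows $\|A_{x_n}\cdots A_{x_1}\|$ is comparable to $\rho(A_{x_n}\cdots A_{x_1})$ in the appropriate averaged sense; more robustly, one checks directly via Theorem~\ref{th:subvar} or the variational principle that $\nu^{\mathbb{N}}$ achieves $h(\nu^{\mathbb{N}}) + s\Lambda(\mathsf{A},\nu^{\mathbb{N}})$ equal to the pressure. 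Concretely, $\Lambda(\mathsf{A},\nu^{\mathbb{N}}) = \sum_i p_i \cdot \frac{1}{?}$ — one uses that for this product structure $\frac{1}{n}\log\|A_{x_n}\cdots A_{x_1}\|$ has a controllable limit; alternatively invoke Theorem~\ref{th:char}(iv) by comparing with the scalar potential $x \mapsto s\log\rho(A_{x_1})$, whose equilibrium state is manifestly the Bernoulli measure $\nu^{\mathbb{N}}$, and whose Lyapunov-type quantity matches $s\Lambda(\mathsf{A},\cdot)$ up to an additive constant precisely because $\rho$ is multiplicative along all words.

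I expect the \textbf{main obstacle} to be the passage between norms and spectral radii: the hypotheses and the natural invariants ($\mathcal{S}(\mathsf{A})$, spectral radii) are multiplicative-in-$\rho$, whereas the equilibrium state is built from \emph{norms} of products, and $\|\cdot\|$ is only submultiplicative. The Gibbs property~\eqref{eq:Gibbs} and Gelfand's formula bridge this only asymptotically, so the clean way to organize the argument is to route everything through Theorem~\ref{th:char}: show that the multiplicativity hypothesis $\rho(B_1B_2)=\rho(B_1)\rho(B_2)$ is equivalent to condition (iii) of Theorem~\ref{th:char} with $(\mathsf{B},t)$ taken to be the one-dimensional tuple $(\rho(A_1),\ldots,\rho(A_M))$ acting on $\mathbb{R}$ (with $t = s$), whose equilibrium state is by inspection the Bernoulli measure with weights $\propto \rho(A_i)^s$; then (iii)$\iff$(i) of Theorem~\ref{th:char} delivers the result immediately. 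The remaining care-points are: handling generators with $\rho(A_i)=0$ (showing the hypothesis excludes this, or that such generators are null sets for $\mu$), and verifying that $(\rho(A_1),\ldots,\rho(A_M))$ with $P > -\infty$ meets the hypotheses of Theorem~\ref{th:char} — trivial since a positive $1\times 1$ matrix tuple is irreducible whenever not all entries vanish, and irreducibility of $\mathsf{A}$ guarantees some product, hence some generator power, has positive spectral radius.
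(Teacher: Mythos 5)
Your proposal is correct in substance, and its converse direction (multiplicativity of $\rho$ implies Bernoulli) is exactly the paper's argument: pass to the one-dimensional tuple $(\rho(A_1),\ldots,\rho(A_M))$, whose products have the same spectral radii as the corresponding products of the $A_i$, and invoke Theorem \ref{th:char} to identify $\mu$ with the equilibrium state of that scalar tuple, which is visibly Bernoulli (your alternative direct variational computation is the part of your sketch that is incomplete, but you correctly fall back on the Theorem \ref{th:char} route). The forward direction, however, is genuinely different from the paper's. You apply the Gibbs inequality \eqref{eq:Gibbs} to the powers $w^m$ of a fixed word $w=(x_1,\ldots,x_n)$ and use Gelfand's formula to get the exact identity $\rho(A_{x_n}\cdots A_{x_1})^s=e^{nP(\mathsf{A},s)}\prod_{k=1}^n p_{x_k}$ (your normalisation $q_i=p_ie^{-P(\mathsf{A},s)}$ should read $q_i=p_ie^{P(\mathsf{A},s)}$), from which multiplicativity on $\mathcal{S}(\mathsf{A})$ is immediate by concatenation; the degenerate case $p_i=0$ is harmless, since the Gibbs inequality then forces $A_i=0$ and the identity reads $0=0$, so neither Proposition \ref{th:sport} nor positivity of the weights is actually needed. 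The paper argues differently: it uses only the invariance of Bernoulli cylinder measures under permutations of the word, which gives $\rho(B_1B_2)\leq\rho(B_1)\rho(B_2)$ by comparing $\|(B_1B_2)^n\|$ with $\|B_1^nB_2^n\|$, and then obtains the reverse inequality by inserting a bounded connecting word via the irreducibility Lemma \ref{le:ire} ($\|B_1^nB_3B_2^n\|\geq\delta\|B_1^n\|\cdot\|B_2^n\|$ for a fixed $B_3$ and infinitely many $n$). Your route avoids Lemma \ref{le:ire} entirely and yields the stronger exact formula $\rho(A_w)^s=e^{nP(\mathsf{A},s)}\mu([w])$, while the paper's route needs only permutation invariance of the cylinder measures rather than the full product formula. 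One point to make explicit in a write-up: when invoking Theorem \ref{th:char}, exact equality of spectral radii is best fed through condition (iv) (it gives $s\Lambda(\mathsf{A},\nu)=s\Lambda(\mathsf{B},\nu)$ for every $\nu$, hence equal pressures), as both you and the paper implicitly do.
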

Before proving the theorem we require the following lemma, which is of a standard type but is sharper than typical formulations (compare e.g.  \cite[Proposition 2.8]{Fe09}, \cite[Lemma 3.1]{Wi02}).
\begin{lemma}\label{le:ire}
Let $\mathsf{A}=(A_1,\ldots,A_M)\in M_d(\mathbb{R})^M$ be irreducible. Then there exists $\delta>0$ such that for every $B_1,B_2 \in M_d(\mathbb{R})$ we may find $(i_1,\ldots,i_k)$ with $0 \leq k<d$ such that $\|B_1A_{i_1}\cdots A_{i_k}B_2\|\geq \delta \|B_1\|\cdot\|B_2\|$. (Here we interpret a product of length zero as being the identity matrix.)
\end{lemma}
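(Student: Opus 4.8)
The plan is to prove Lemma \ref{le:ire} by a compactness and scaling argument, reducing the statement about arbitrary $B_1,B_2$ to a statement about rank-one matrices, and there to a statement about pairs of unit vectors.

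First I would reduce to the case where $B_1$ and $B_2$ are rank-one matrices of unit norm. Indeed, writing $\|B_1A_{i_1}\cdots A_{i_k}B_2\| \geq \|u^* B_1 A_{i_1}\cdots A_{i_k} B_2 v\|$ for suitable unit vectors $u,v$, and noting that a rank-one matrix $B$ of norm $1$ can be written $B = w z^*$ with $w,z$ unit vectors, it suffices to show: there is $\delta>0$ such that for every pair of unit vectors $z, w \in \mathbb{R}^d$ there exists $(i_1,\ldots,i_k)$ with $0 \leq k < d$ such that $\left|\langle w, A_{i_1}\cdots A_{i_k} z\rangle\right| \geq \delta$. (The passage from general $B_1, B_2$ to this inner-product form uses only that $\|B_j\|$ equals the operator norm, so no generality is lost up to replacing $\delta$ by itself.) Actually it is cleaner to prove the intermediate claim that for every unit vector $z$ there exist $(i_1,\ldots,i_k)$ with $0\le k<d$ such that $\|A_{i_1}\cdots A_{i_k} z\|$ is bounded below away from $0$, and moreover the span of all such vectors $\{A_{i_1}\cdots A_{i_k}z : 0 \le k < d\}$ is all of $\mathbb{R}^d$; this second form is what interacts with irreducibility.

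The key step is the following: for each fixed unit vector $z$, let $W(z)$ denote the linear span of $\{A_{i_1}\cdots A_{i_k} z : 0 \le k \le d-1, \ i_j \in \{1,\ldots,M\}\}$. I claim $W(z) = \mathbb{R}^d$ for every $z \neq 0$. First, $W(z) \neq \{0\}$ since it contains $z$. Second, I claim $W(z)$ is invariant under every $A_i$: given a spanning vector $A_{i_1}\cdots A_{i_k}z$ with $k \le d-1$, applying $A_i$ gives $A_i A_{i_1}\cdots A_{i_k} z$, a product of length $k+1 \le d$; if $k+1 \le d-1$ this is directly a spanning vector, and if $k+1 = d$ one uses the Cayley--Hamilton-type observation that products of length $d$ in the $A_i$'s acting on $z$ already lie in $W(z)$ — more carefully, one should define $W(z)$ as the span over all lengths $k \ge 0$, observe it is the smallest $\mathsf{A}$-invariant subspace containing $z$, hence equals $\mathbb{R}^d$ by irreducibility, and then observe that this span is already achieved using lengths $0 \le k \le d-1$ because once the partial spans stabilise they cannot grow (a strictly increasing chain of subspaces has length at most $d$). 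So $W(z) = \mathbb{R}^d$, achieved with words of length $< d$.

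Finally I would run the compactness argument. For a fixed unit vector $z$, since $\mathrm{span}\{A_{i_1}\cdots A_{i_k}z : 0\le k<d\} = \mathbb{R}^d$, in particular for any fixed unit vector $w$ there is a word with $\left|\langle w, A_{i_1}\cdots A_{i_k}z\rangle\right| > 0$; define
\[
\delta(z,w) := \max_{0 \le k < d} \ \max_{i_1,\ldots,i_k} \ \left|\langle w, A_{i_1}\cdots A_{i_k} z\rangle\right| > 0,
\]
which is a continuous function of $(z,w)$ on the compact set $S^{d-1}\times S^{d-1}$, hence attains a positive minimum $\delta > 0$. Unwinding the reduction from the first paragraph, for arbitrary $B_1, B_2 \in M_d(\mathbb{R})$ pick unit vectors $u, v$ with $\|B_1^* u\| = \|B_1\|$ and $\|B_2 v\| = \|B_2\|$, set $z := B_2 v/\|B_2 v\|$ and $w := B_1^* u/\|B_1^* u\|$ (discarding the trivial cases $B_1 = 0$ or $B_2 = 0$), and choose the word realising $\delta(z,w) \ge \delta$; then $\|B_1 A_{i_1}\cdots A_{i_k} B_2\| \ge \left|\langle u, B_1 A_{i_1}\cdots A_{i_k} B_2 v\rangle\right| = \|B_1\|\,\|B_2\|\,\left|\langle w, A_{i_1}\cdots A_{i_k} z\rangle\right| \ge \delta \|B_1\|\,\|B_2\|$, as required.

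The main obstacle is the bookkeeping in the second paragraph: one must be careful that the single uniform bound "length $< d$" really suffices simultaneously for all $z$ and all target vectors, which is exactly what the compactness of $S^{d-1} \times S^{d-1}$ and the length-$d$ stabilisation of the invariant-subspace chain deliver; no single product works for all $z$, but the $\max$ over the finitely many words of length $< d$ does, and that is where irreducibility is used in an essential (not merely qualitative) way.
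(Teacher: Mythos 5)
Your proof is correct and follows essentially the same route as the paper: the core is the identical increasing chain of spans $V_k$ of words of length at most $k$ applied to a vector, which stabilises within $d$ steps and is forced to be all of $\mathbb{R}^d$ by irreducibility, combined with a compactness argument to extract a uniform $\delta$. The only cosmetic difference is that you run the compactness over pairs of unit vectors on $S^{d-1}\times S^{d-1}$ after reducing to rank-one considerations, whereas the paper runs it directly over pairs of norm-one matrices; both yield the same conclusion.
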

\begin{proof}
Clearly we may restrict our attention to the case in which $\|B_1\|=\|B_2\|=1$, the remaining cases following by homogeneity. By compactness it suffices to show that we may always choose a product $A=A_{i_1}\cdots A_{i_k}$ such that $B_1AB_2$ is nonzero. Fix $B_1$ and $B_2$ and let $v$ be a nonzero vector in the image of $B_2$. For each $k=0,\ldots,d-1$ let $V_k$ denote the space spanned by all vectors of the form $A_{i_1}\cdots A_{i_m}v$ where $0 \leq m \leq k$. We have $\dim V_0=1$ and $\dim V_{k+1} \geq \dim V_k$ for every $k$, and clearly $A_iV_k \subseteq V_{k+1}$ when $0 \leq k<d-1$. Since $1 =\dim V_0 \leq \dim V_k \leq d$ for every $k$, we have $\dim V_{k+1}=\dim V_k$ for some $k$ by the pigeonhole principle, so $V_k$ is an $\mathsf{A}$-invariant subspace with nonzero dimension and therefore is equal to $\mathbb{R}^d$. In particular $V_{d-1}=\mathbb{R}^d$ and so $V_{d-1}$ contains a vector which is not in the kernel of $B_1$. This implies that there exists $A=A_{i_1}\cdots A_{i_k}$ with $0 \leq k <d$ such that $B_1Av \neq 0$, and therefore $B_1AB_2\neq 0$ as desired.
\end{proof}

\begin{proof}[Proof of Theorem \ref{th:be1}]
Suppose that $\mu$ is a Bernoulli measure. If $(x_1,\ldots,x_n)$ and $(y_1,\ldots,y_n)$ are arbitrary permutations of each other then
\begin{align*}\|A_{x_n}\cdots A_{x_1}\|^s &\leq Ce^{nP(\mathsf{A},s)} \mu([x_1\cdots x_n])\\
&=Ce^{nP(\mathsf{A},s)} \mu([y_1\cdots y_n])\leq C^2\|A_{y_n}\cdots A_{y_1}\|^s\end{align*}
by the Gibbs inequality \eqref{eq:Gibbs}. By this principle it follows that if $B_1,B_2 \in \mathcal{S}(\mathsf{A})$ then for every $n \geq 1$
\[\|(B_1B_2)^n\|\leq C^{\frac{2}{s}} \|B_1^nB_2^n\| \leq C^{\frac{2}{s}} \|B_1^n\|\cdot\|B_2^n\|\]
and therefore $\rho(B_1B_2)\leq \rho(B_1)\rho(B_2)$ by Gelfand's formula. On the other hand, by the preceding lemma there exist a constant $\delta>0$ and a sequence of matrices $B_3^{(n)}=A_{i_1(n)}\cdots A_{i_{k_n(n)}}$ such that for every $n \geq 1$
\[\left\|B_1^nB_3^{(n)}B_2^n\right\| \geq \delta \|B_1^n\|\cdot \|B_2^n\|.\]
Since the matrices $B_3^{(n)}$ are all drawn from the same finite set there exists in particular a fixed matrix $B_3 \in \mathcal{S}(\mathsf{A}) \cup \{\mathrm{Id}\}$ such that $\|B_1^nB_3B_2^n\| \geq \delta \|B_1^n\|\cdot \|B_2^n\|$ for infinitely many $n$. Hence
\[\delta \|B_1^n\|\cdot \|B_2^n\| \leq \|B_1^nB_3B_2^n\| \leq C^{\frac{2}{s}} \left\|B_3(B_1B_2)^n\right\| \leq C^{\frac{2}{s}}\|B_3\|\cdot \left\|(B_1B_2)^n\right\|\]
for infinitely many $n$, and therefore $\rho(B_1)\rho(B_2)\leq \rho(B_1B_2)$ by Gelfand's formula again. This proves the first direction of the theorem.

To prove the second direction, we note that the one-dimensional matrices $B_1$ up to $B_M$ defined by $B_i:=\rho(A_i)$ satisfy $\rho(A_{i_1}\cdots A_{i_n})=\rho(B_{i_1}\cdots B_{i_n})$ for every $i_1,\ldots,i_n \in \{1,\ldots,M\}^n$ for every $n\geq 1$, and it follows by Theorem \ref{th:char} that $\mu$ is the equilibrium state of $((B_1,\ldots,B_M),s)$; but the equilibrium state of a set of one-dimensional matrices is a Bernoulli measure.
\end{proof}
A semigroup $\mathcal{S}$ with the property that $\rho(B_1B_2)=\rho(B_1)\rho(B_2)$ for all $B_1,B_2\in\mathcal{S}$ is sometimes said to have \emph{multiplicative spectral radius}. Semigroups of non-invertible matrices with this property are at present not completely understood (though see \cite{OmRa97,PrVo14}).  An example of a finitely-generated semigroup of non-invertible matrices in which the spectral radius is multiplicative is provided by the matrices
\[A_1:=\left(\begin{array}{cc}1&1\\0&0\end{array}\right), \quad A_2:=\left(\begin{array}{cc}1&-1\\0&0\end{array}\right),\quad A_3:=\left(\begin{array}{cc}0&0\\1&1\end{array}\right),\quad A_4:=\left(\begin{array}{cc}0&0\\-1&1\end{array}\right).\]
The reader may verify directly that $\{\pm A_1,\ldots, \pm A_4\}$ is a semigroup in which every element has spectral radius $1$. Moreover since $A_1$ and $A_2$ have an eigenbasis consisting of the horizontal axis and the negative diagonal, and $A_3$ and $A_4$ have an eigenbasis consisting of the vertical axis and the positive diagonal, this set is irreducible.

However, semigroups of \emph{invertible} matrices with multiplicative spectral radius have been completely characterised in \cite{PrVo14}, and this allows us to present a second theorem on Bernoulli measures as equilibrium states. For each $A \in M_d(\mathbb{R})$ we recall that the \emph{singular values} of $A$, denoted $\alpha_1(A),\ldots,\alpha_d(A)$, are the non-negative square roots of the eigenvalues of the positive semidefinite matrix $A^TA$, listed in decreasing order with repetition in the case of multiplicity. We note the identity $\alpha_1(A)\cdots \alpha_d(A)=|\det A|$ for every $A \in M_d(\mathbb{R})$. Given $\mu \in \mathcal{M}_\sigma $ and $\mathsf{A} \in M_d(\mathbb{R})^M$ we define the \emph{Lyapunov exponents} of $\mathsf{A}$ with respect to $\mu$ to be the numbers
\[\lambda_i(\mathsf{A},\mu):=\lim_{n\to\infty} \frac{1}{n}\int_{\Sigma_M}\log\alpha_i(A_{x_n}\cdots A_{x_1})d\mu(x).\]
(The existence of these limits follows from the inequality
\[\prod_{i=1}^k \alpha_i(AB)\leq \prod_{i=1}^k \alpha_i(A)\alpha_i(B)\]
for every $A,B \in M_d(\mathbb{R})$ and $k =1,\ldots,d$, which guarantees the existence of
\[\sum_{i=1}^k \lambda_i(\mathsf{A},\mu)=\lim_{n\to\infty}\frac{1}{n}\int_{\Sigma_M}\sum_{i=1}^k\log\alpha_i(A_{x_n}\cdots A_{x_1})d\mu(x)\]
by subadditivity.)  We observe that $\lambda_1(\mathsf{A},\mu)$ coincides with the top Lyapunov exponent $\Lambda(\mathsf{A},\mu)$ defined in section 1. A result related to Theorem \ref{th:be2} is presented in \cite[\S3]{MoSh16} in the context of equilibrium states of the singular value function (see \S\ref{se:end} for the definition).
\begin{theorem}\label{th:be2}
Let $\mathsf{A}=(A_1,\ldots,A_M) \in M_d(\mathbb{R})^M$ be irreducible and suppose that none of the matrices $A_i$ is equal to the zero matrix. Let  $s>0$ and let $\mu$ the equilibrium state of $(\mathsf{A},s)$. Then the following three conditions are equivalent:
\begin{enumerate}[(i)]
\item
The measure $\mu$ is a fully-supported Bernoulli measure, and each of the matrices $A_1,\ldots,A_M$ is invertible;
\item
The Lyapunov exponents
\[\lambda_i(A,\mu):=\lim_{n\to\infty} \frac{1}{n}\int_{\Sigma_M} \log\alpha_i(A_{x_n}\cdots A_{x_1}) d\mu(x) \]
satisfy $\lambda_1(A,\mu)=\lambda_2(A,\mu)=\cdots =\lambda_d(A,\mu)$;
\item
There exists $B \in GL_d(\mathbb{R})$ such that $|\det A_i|^{-1/d} B^{-1}A_iB \in O(d)$ for every $i=1,\ldots,M$.
\end{enumerate}
\end{theorem}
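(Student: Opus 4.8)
The plan is to establish the cycle of implications (i)$\Rightarrow$(iii)$\Rightarrow$(ii)$\Rightarrow$(i). For (i)$\Rightarrow$(iii): since $\mu$ is a Bernoulli measure, Theorem \ref{th:be1} shows that $\mathcal{S}(\mathsf{A})$ has multiplicative spectral radius, and since every $A_i$ is invertible so is every element of $\mathcal{S}(\mathsf{A})$; as $\mathsf{A}$ is also irreducible I would invoke the classification of Protasov and Voynov \cite{PrVo14} of irreducible invertible matrix semigroups with multiplicative spectral radius, according to which such a semigroup is conjugate into the set of scalar multiples of orthogonal matrices. Thus there is $B \in GL_d(\mathbb{R})$ with $B^{-1}A_iB = c_iU_i$, where $c_i>0$ and $U_i \in O(d)$; comparing absolute values of determinants gives $c_i=|\det A_i|^{1/d}$, so that $|\det A_i|^{-1/d}B^{-1}A_iB = U_i\in O(d)$, which is (iii). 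For (iii)$\Rightarrow$(ii): conjugating by $B$ we may assume $A_i = c_iU_i$ with $c_i>0$ and $U_i\in O(d)$, so that every product $A_{x_n}\cdots A_{x_1}$ equals $(c_{x_1}\cdots c_{x_n})$ times an orthogonal matrix and hence has all $d$ of its singular values equal to $c_{x_1}\cdots c_{x_n}$; since conjugating back by $B$ changes each singular value by at most the fixed factor $\|B\|\cdot\|B^{-1}\|$, the quantities $\frac1n\log\alpha_i(A_{x_n}\cdots A_{x_1})$ and $\frac1n\sum_{j=1}^n\log c_{x_j}$ differ by $O(1/n)$ uniformly in $i$ and in the word, so that $\lambda_i(\mathsf{A},\mu)=\int\log c_{x_1}\,d\mu$ for every $i$ and all the Lyapunov exponents coincide.

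For the substantive implication (ii)$\Rightarrow$(i) I would argue in stages. First, all $A_i$ are invertible: since $\mu$ is an equilibrium state of an irreducible tuple we have $\Lambda(\mathsf{A},\mu)=\lambda_1(\mathsf{A},\mu)>-\infty$, so by (ii) every $\lambda_i(\mathsf{A},\mu)$ is finite; but if some $A_j$ had rank $r<d$ then $\alpha_{r+1}(A_j)=0$, and as $\mu([j])>0$ by the Gibbs inequality \eqref{eq:Gibbs} (since $A_j\neq 0$), the value at $n=1$ of the subadditive sequence defining $\sum_{i=1}^{r+1}\lambda_i(\mathsf{A},\mu)$ would force that sum to be $-\infty$, hence $\lambda_{r+1}(\mathsf{A},\mu)=-\infty$, a contradiction. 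Consequently $0\notin\mathcal{S}(\mathsf{A})$ and $\mu$ is fully supported by Proposition \ref{th:sport}, so by Theorem \ref{th:be1} it remains only to prove that $\mathcal{S}(\mathsf{A})$ has multiplicative spectral radius. Put $\tilde{A}_i:=|\det A_i|^{-1/d}A_i$, so that $|\det\tilde{A}_i|=1$, and write $\tilde{\mathsf{A}}$ for the tuple $(\tilde{A}_1,\ldots,\tilde{A}_M)$; then $\lambda_i(\tilde{\mathsf{A}},\nu)=\lambda_i(\mathsf{A},\nu)-\frac1d\int\log|\det A_{x_1}|\,d\nu$ for every $\nu\in\mathcal{M}_\sigma$ and $i$, so that $\sum_{i=1}^d\lambda_i(\tilde{\mathsf{A}},\nu)=0$ for all $\nu$ and thus $\Lambda(\tilde{\mathsf{A}},\nu)\geq0$ always, while (ii) combined with $\sum_{i=1}^d\lambda_i(\mathsf{A},\mu)=\int\log|\det A_{x_1}|\,d\mu$ yields $\lambda_i(\tilde{\mathsf{A}},\mu)=0$ for every $i$, in particular $\Lambda(\tilde{\mathsf{A}},\mu)=0$. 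Since $\rho(\tilde{A}_{x_n}\cdots\tilde{A}_{x_1})\geq|\det(\tilde{A}_{x_n}\cdots\tilde{A}_{x_1})|^{1/d}=1$ for every word, the Berger--Wang formula \eqref{eq:bwf} reduces the matter to proving $\varrho_\infty(\tilde{\mathsf{A}})=1$: this forces $\rho(A_{x_n}\cdots A_{x_1})=\prod_{j=1}^n|\det A_{x_j}|^{1/d}$ for every word, which is multiplicative, and the proof would be complete.

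The obstacle lies in this final reduction. By \cite[Proposition 2.2]{Mo13} one has $\log\varrho_\infty(\tilde{\mathsf{A}})=\sup_\nu\Lambda(\tilde{\mathsf{A}},\nu)$, so the task is to promote the fact that $\mu$ attains the infimum $0$ of $\Lambda(\tilde{\mathsf{A}},\cdot)$ to the assertion that the supremum also equals $0$ --- that is, to pass from a property of $\mu$-typical orbits to a property of every finite word. This is exactly where the Gibbs inequality \eqref{eq:Gibbs}, which controls the measure of every cylinder, must be brought to bear. One route is a pressure estimate: if $\varrho_\infty(\tilde{\mathsf{A}})>1$ then \eqref{eq:bwf} yields a word $v$ with $\rho(\tilde{A}_v)>1$, and one would estimate the contribution of the cylinders $[v^m]$ to $\sum_{|w|=n}\mu([w])\log\|\tilde{A}_w\|$ using $\mu([v^m])\asymp\|A_{v^m}\|^se^{-|v|mP(\mathsf{A},s)}$ together with $\|\tilde{A}_{v^m}\|\geq\rho(\tilde{A}_v)^m$, seeking to contradict $\lim_n\frac1n\sum_{|w|=n}\mu([w])\log\|\tilde{A}_w\|=\Lambda(\tilde{\mathsf{A}},\mu)=0$; the delicate point is bounding $P(\mathsf{A},s)$ from above, for which one uses an entropy bound together with the identity $\Lambda(\mathsf{A},\mu)=\frac1d\int\log|\det A_{x_1}|\,d\mu$ supplied by (ii). A possibly cleaner alternative is to construct an $\tilde{\mathsf{A}}$-invariant inner product directly, by averaging the Euclidean inner product over the products $\tilde{A}_w$ against weights adapted to $\mu$ --- in the spirit of the averaging in the proof of Proposition \ref{pr:reduck}, but arranged so that the limiting form is fixed by each $\tilde{A}_i$ rather than merely contracted on average --- and then invoking irreducibility as there to see that this form is positive definite, thereby realising each $\tilde{A}_i$ as an isometry and establishing (iii) outright. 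I expect the verification that such an averaged form exists, is finite, and is genuinely invariant rather than merely subinvariant to be the main difficulty of the theorem.
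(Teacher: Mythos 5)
Your implications (i)$\Rightarrow$(iii) and (iii)$\Rightarrow$(ii) are essentially the paper's argument and are fine. The genuine gap is in (ii)$\Rightarrow$(i). After correctly deducing invertibility of the $A_i$ and full support, you reduce the Bernoulli property, via Theorem \ref{th:be1}, to multiplicativity of the spectral radius, and then to the statement $\varrho_\infty(\tilde{\mathsf{A}})=1$ for the determinant-normalised tuple $\tilde{\mathsf{A}}$. But the only dynamical input available from (ii) is $\Lambda(\tilde{\mathsf{A}},\mu)=0$, and since $\sum_{i=1}^d\lambda_i(\tilde{\mathsf{A}},\nu)=0$ forces $\Lambda(\tilde{\mathsf{A}},\nu)\geq 0$ for \emph{every} $\nu\in\mathcal{M}_\sigma$, the measure $\mu$ attains the \emph{infimum} of $\Lambda(\tilde{\mathsf{A}},\cdot)$, whereas $\log\varrho_\infty(\tilde{\mathsf{A}})$ is its \emph{supremum}; you never bridge this, and you acknowledge as much. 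Neither sketch closes the gap. The pressure-estimate route fails as stated: if $v$ is a word with $\rho(\tilde{A}_v)>1$, the Gibbs inequality \eqref{eq:Gibbs} gives $\mu([v^k])\leq C\,\|A_{v^k}\|^s e^{-k|v|P(\mathsf{A},s)}$, and since $P(\mathsf{A},s)=h(\mu)+s\Lambda(\mathsf{A},\mu)$ generically strictly exceeds $s$ times the Lyapunov exponent of the periodic orbit of $v$ (the entropy term is lost), the cylinders $[v^k]$ have exponentially small mass; they therefore contribute nothing to $\frac{1}{n}\sum_{|w|=n}\mu([w])\log\|\tilde{A}_w\|\to 0$, and no contradiction arises. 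In general a Gibbs state of positive entropy cannot ``see'' the joint spectral radius through its own Lyapunov average, so this route needs a fundamentally different idea. The averaged-inner-product route is likewise only a hope: weighting by $\mu$ gives at best subinvariance on average, not exact invariance under each $\tilde{A}_i$, and you do not carry it out. As it stands, (ii)$\Rightarrow$(i) is not proved.

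The missing idea, which is how the paper argues, is that one should not try to prove multiplicativity of the spectral radius at this stage at all. Under (ii) one has $\lambda_1(\mathsf{A},\mu)=\frac{1}{d}\sum_{i=1}^d\lambda_i(\mathsf{A},\mu)=\frac{1}{d}\int\log|\det A_{x_1}|\,d\mu(x)$, while for every $\nu\in\mathcal{M}_\sigma$ the elementary inequality $\lambda_1(\mathsf{A},\nu)\geq\frac{1}{d}\sum_{i=1}^d\lambda_i(\mathsf{A},\nu)=\frac{1}{d}\int\log|\det A_{x_1}|\,d\nu(x)$ holds. Combining these with the fact that $\mu$ maximises $h(\nu)+s\lambda_1(\mathsf{A},\nu)$ shows that $\mu$ also maximises $h(\nu)+\frac{s}{d}\int\log|\det A_{x_1}|\,d\nu(x)$, i.e.\ $\mu$ is the equilibrium state of the \emph{additive} potential $x\mapsto\frac{s}{d}\log|\det A_{x_1}|$, which depends only on the first coordinate; the classical theory then identifies $\mu$ as the Bernoulli measure with $\mu([k])=|\det A_k|^{s/d}e^{-P(\mathsf{A},s)}$. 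The Gibbs inequality then gives $|\det A_k|^{s/d}\geq C^{-1}\|A_k\|^s>0$, which yields invertibility and full support in one stroke (so your separate singular-value argument, though correct, is not needed), and multiplicativity of the spectral radius follows afterwards for free via (i)$\Rightarrow$(iii).
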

\begin{proof}
If (i) holds then by Theorem \ref{th:be1} we have $\rho(B_1B_2)=\rho(B_1)\rho(B_2)$ for every $B_1,B_2 \in \mathcal{S}(\mathsf{A})$. The semigroup generated by the matrices $\rho(A_i)^{-1}A_i$ thus consists entirely of matrices with spectral radius equal to $1$. A theorem of V. Protasov and A. Voynov (\cite[Theorem 2]{PrVo14}) implies that this semigroup is conjugate to a subsemigroup of $O(d)$ by a suitable change-of-basis matrix. It follows that $\rho(A_i)=|\det A_i|^{1/d}$ for every $i=1,\ldots,M$, and we have obtained (iii). If (iii) holds then by a suitable change of basis we may take $B$ to be the identity. By the definition of $O(d)$ every $A_{i_1}\cdots A_{i_n}$ satisfies $(A_{i_1}\cdots A_{i_n})^TA_{i_1}\cdots A_{i_n}=(\det A_{i_1}\cdots A_{i_n})^{2/d} \mathrm{Id}$ and so the singular values of $A_{i_1}\cdots A_{i_n}$ are all equal to $|\det A_{i_1}\cdots A_{i_n}|^{1/d}$. It follows that the Lyapunov exponents $\lambda_i(A,\nu)$ are independent of $i$ for every $\nu \in \mathcal{M}_\sigma $, and we obtain (ii).

Let us now prove (ii)$\implies$(i). We have
\begin{align*}h(\mu)+\frac{s}{d}\int \log|\det A_{x_1}|d\mu(x) &= h(\mu)+\lim_{n\to\infty} \frac{s}{nd}\int \log |\det A_{x_n}\cdots A_{x_1}|d\mu(x)\\
& =  h(\mu)+\lim_{n\to\infty} \frac{1}{n}\int \frac{s}{d}\sum_{i=1}^d \log \alpha_i( A_{x_n}\cdots A_{x_1})d\mu(x)\\
&=h(\mu)+\frac{s}{d}\sum_{i=1}^d \lambda_i(\mathsf{A},\mu)=h(\mu)+s\lambda_1(\mathsf{A},\mu)\end{align*}
using the identity $|\det B|=\prod_{i=1}^d \alpha_i(B)$ and the hypothesis (ii), and since for every $\nu \in\mathcal{M}_\sigma $
\begin{align*}h(\mu)+s\lambda_1(\mathsf{A},\mu)&\geq h(\nu)+s\lambda_1(\mathsf{A},\nu)\\
& \geq h(\nu)+\frac{s}{d}\sum_{i=1}^d \lambda_i(\mathsf{A},\nu)=h(\nu)+\frac{s}{d} \int \log|\det A_{x_1}|d\nu(x) \end{align*}
using the elementary identity $\lambda_1(\mathsf{A},\nu)=\max_{1 \leq i \leq d}\lambda_i(\mathsf{A},\nu)$, it follows that
\[h(\mu)+\frac{s}{d}\int \log|\det A_{x_1}|d\mu(x) =\sup_{\nu \in \mathcal{M}_\sigma } h(\nu)+\frac{s}{d}\int \log|\det A_{x_1}|d\nu(x)\]
so that $\mu$ is the equilibrium state of the scalar potential $x\mapsto \frac{s}{d}\log |\det A_{x_1}|$. Since this potential depends only on the first co-ordinate of $x$, $\mu$ must be the  unique Bernoulli measure which satisfies
\[\mu([k])=  \frac{|\det A_k|^{s/d}}{e^{P(\mathsf{A},s)}}\]
for each $k=1,\ldots,M$. Using the Gibbs inequality we deduce
\[|\det A_k|^{s/d} \geq C^{-1}\|A_k\|^s>0\]
for each $k=1,\ldots,M$ and it follows that every $A_k$ is invertible as required, completing the proof of the theorem.
\end{proof}

We may now prove the result mentioned at the end of the previous section, which describes completely the situation in which the matrices $A_i$ are all invertible and in which the equilibrium state of $(\mathsf{A},s)$ is independent of $s$:
\begin{theorem}\label{th:whatever}
Let $\mathsf{A}=(A_1,\ldots,A_M)\in M_d(\mathbb{R})^M$ be irreducible, and suppose that every $A_i$ is invertible. Then the following are equivalent:
\begin{enumerate}[(i)]
\item
The equilibrium state of $(\mathsf{A},s)$ is the same for every $s$.
\item
There exists $s>0$ such that the equilibrium state of $(\mathsf{A},s)$ is the measure of maximal entropy on $\Sigma_M$.
\item
There exist $\lambda \in \mathbb{R}$ and invertible $B \in M_d(\mathbb{R})$ such that $e^{-\lambda}B^{-1}A_iB \in O(d)$ for every $i=1,\ldots,M$.
\end{enumerate}
\end{theorem}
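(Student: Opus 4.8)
The plan is to make condition (iii) the hub and prove the three implications (i)$\Rightarrow$(iii), (ii)$\Rightarrow$(iii), and (iii)$\Rightarrow$(i)\,\&\,(ii), the last pair being obtained simultaneously. The key observation for (iii)$\Rightarrow$(i),(ii) is that (iii) makes the top Lyapunov exponent constant across all invariant measures. Indeed, since $\Lambda(\mathsf{A},\cdot)$ and all the hypotheses of the theorem are invariant under the change of basis $B$, we may assume $B$ is the identity, so each $e^{-\lambda}A_i$ is orthogonal; then every product $A_{x_n}\cdots A_{x_1}$ is $e^{\lambda n}$ times a product of orthogonal matrices, hence $\|A_{x_n}\cdots A_{x_1}\| = e^{\lambda n}$ exactly, for all $x$ and $n$. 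Consequently $\frac1n\int \log\|A_{x_n}\cdots A_{x_1}\|\,d\nu = \lambda$ for every $\nu \in \mathcal{M}_\sigma$, so $\Lambda(\mathsf{A},\nu)=\lambda$ identically. Hence for every $s>0$ we have $h(\nu)+s\Lambda(\mathsf{A},\nu)=h(\nu)+s\lambda$, whose supremum over $\mathcal{M}_\sigma$ is attained precisely at the unique measure of maximal entropy; by Theorem \ref{th:Gibbs} this maximiser is the equilibrium state, so the equilibrium state of $(\mathsf{A},s)$ is the measure of maximal entropy for every $s$. This yields both (i) and (ii).

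For (i)$\Rightarrow$(iii): choose distinct $s_1,s_2>0$; by (i) the equilibrium states of $(\mathsf{A},s_1)$ and $(\mathsf{A},s_2)$ coincide, so Theorem \ref{th:char} (applied with $\mathsf{B}=\mathsf{A}$, $s=s_1$, $t=s_2$, the hypothesis $P(\mathsf{A},s_i)>-\infty$ holding by irreducibility) gives $e^{-nP(\mathsf{A},s_1)}\rho(A_{i_n}\cdots A_{i_1})^{s_1}=e^{-nP(\mathsf{A},s_2)}\rho(A_{i_n}\cdots A_{i_1})^{s_2}$ for every $(i_1,\ldots,i_n)$. As every $A_i$ is invertible the spectral radius is positive, and taking logarithms gives $\log\rho(A_{i_n}\cdots A_{i_1})=\lambda n$ with $\lambda:=(P(\mathsf{A},s_1)-P(\mathsf{A},s_2))/(s_1-s_2)$, independently of the word. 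Thus $\mathcal{S}(e^{-\lambda}\mathsf{A})$ is an irreducible semigroup of invertible matrices each of spectral radius $1$, and the theorem of Protasov and Voynov \cite[Theorem 2]{PrVo14} — invoked exactly as in the proof of Theorem \ref{th:be2} — provides an invertible $B$ with $B^{-1}(e^{-\lambda}A_i)B\in O(d)$ for every $i$, which is (iii). (One could alternatively quote Theorem \ref{th:cks} here, whose condition (ii) is essentially this statement.)

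For (ii)$\Rightarrow$(iii): suppose the equilibrium state of some $(\mathsf{A},s_0)$ is the measure of maximal entropy $\mu$, which is a fully-supported Bernoulli measure; since each $A_i$ is invertible (hence nonzero), Theorem \ref{th:be2} applies and yields $B\in GL_d(\mathbb{R})$ with $|\det A_i|^{-1/d}B^{-1}A_iB\in O(d)$ for every $i$. It remains to see that $|\det A_i|$ is independent of $i$. Conjugating by $B$, each $A_i$ is $|\det A_i|^{1/d}$ times an orthogonal matrix, so $\|A_k^n\|=|\det A_k|^{n/d}$ for the constant word; substituting into the Gibbs inequality \eqref{eq:Gibbs} with $\mu([k\cdots k])=M^{-n}$ gives $C^{-1}\le \big(Me^{-P(\mathsf{A},s_0)}|\det A_k|^{s_0/d}\big)^{n}\le C$ for all $n\geq1$, which forces $|\det A_k|^{s_0/d}=e^{P(\mathsf{A},s_0)}/M$ for every $k$. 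Hence $|\det A_i|$ equals a common value, which we may write as $e^{\lambda d}$, and then $e^{-\lambda}B^{-1}A_iB=|\det A_i|^{-1/d}B^{-1}A_iB\in O(d)$ for all $i$, establishing (iii) and closing the cycle.

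I expect the main obstacle to be the step (ii)$\Rightarrow$(iii): Theorem \ref{th:be2} delivers only a per-matrix normalisation by $|\det A_i|^{1/d}$, and upgrading this to a single common scalar $e^{\lambda}$ requires the additional information that the equilibrium-state masses $\mu([k])$ are all equal. The device that supplies this is the constant-word trick, which converts the purely multiplicative Gibbs comparison into an exact identity by taking $n$th roots as $n\to\infty$. A secondary point to handle carefully throughout is the bookkeeping that makes products of the $A_i$ have exactly geometric norms: the identity $\alpha_1(A)\cdots\alpha_d(A)=|\det A|$ together with the fact that products of orthogonal matrices are orthogonal, which is what underlies both (iii)$\Rightarrow$(i),(ii) and the determinant computation above.
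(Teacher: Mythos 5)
Your proof is correct, and it draws on the same toolkit as the paper (Theorems \ref{th:char}/\ref{th:cks} and \ref{th:be2}, the Protasov--Voynov theorem, the Gibbs inequality and the variational principle), but the routing of the implications is genuinely different. The paper proves the cycle (i)$\Rightarrow$(ii)$\Rightarrow$(iii)$\Rightarrow$(i),(ii): from (i) it uses Theorem \ref{th:cks} to make every normalised spectral radius equal to $1$ (invertibility ruling out the value $0$), deduces as in Theorem \ref{th:char} that $\Lambda(\mathsf{A},\nu)=\lambda$ for all $\nu$, and concludes by the variational principle that the equilibrium state is the measure of maximal entropy, so that the orthogonal conjugacy in (iii) is produced only once, inside the step (ii)$\Rightarrow$(iii) via Theorem \ref{th:be2}. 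You instead prove (i)$\Rightarrow$(iii) directly, applying \cite[Theorem 2]{PrVo14} to the irreducible semigroup $\mathcal{S}(e^{-\lambda}\mathsf{A})$ of invertible matrices all of spectral radius $1$; this is a legitimate shortcut (constant spectral radius trivially gives multiplicativity, and invertibility is available by hypothesis), at the cost of invoking Protasov--Voynov a second time where the paper factors everything through the measure of maximal entropy. In (ii)$\Rightarrow$(iii) you re-derive the constancy of $|\det A_i|$ by feeding the constant words $k^n$ into the Gibbs inequality, whereas the paper quotes the identity $\mu([i])=|\det A_i|^{s/d}e^{-P(\mathsf{A},s)}$ established inside the proof of Theorem \ref{th:be2}; since that identity is not part of the statement of Theorem \ref{th:be2}, your self-contained computation is, if anything, slightly more careful. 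The treatment of (iii)$\Rightarrow$(i),(ii) coincides with the paper's.
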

\begin{proof}
If (iii) holds then it is clear that $\Lambda(\mathsf{A},\nu)=\lambda$ for every $\nu \in \mathcal{M}_\sigma$, and it follows from the variational principle that for every $s>0$ the equilibrium state of $(\mathsf{A},s)$ is the measure of maximal entropy. This implies both (i) and (ii).

 If (i) holds then by Theorem \ref{th:cks} there is a number $\lambda$ such that $\mathcal{S}(e^{-\lambda}\mathsf{A})$ consists only of matrices with spectral radius $1$ or $0$, and by invertibility they must all have spectral radius $1$. It follows that $\rho(A_{x_n}\cdots A_{x_1})=e^{\lambda n}$ for every $x_1,\ldots,x_n \in \{1,\ldots,M\}$ and every $n \geq 1$. By an argument similar to Theorem \ref{th:char} it follows that $\Lambda(\mathsf{A},\nu)=\lambda$ for every $\nu \in \mathcal{M}_\sigma$. It follows by the variational principle that for every $s>0$ the measure of maximal entropy is the equilibrium state of $(\mathsf{A},s)$, and this implies (ii).
 
 If (ii) holds then since the equilibrium measure of  $(\mathsf{A},s)$ is a Bernoulli measure, it follows by by Theorem \ref{th:be2} that there is an invertible matrix $B \in M_d(\mathbb{R})$ such that $|\det A_i|^{1/d}B^{-1}A_iB \in O(d)$ for every $i=1,\ldots,M$. The measure $\mu$ is therefore the unique Bernoulli measure such that
 \[\mu([i])=\frac{|\det A_i|^{\frac{s}{d}}}{e^{P(\mathsf{A},s)}}\]
 for every $i=1,\ldots,M$.  Since $\mu$ is the measure of maximal entropy it follows that $|\det A_i|^{1/d}$ takes a value independent of $i$, which we call $e^{-\lambda}$. This proves (iii) and completes the proof of the theorem.
\end{proof}


\section{Directions for future research}\label{se:end}
\subsection{Mixing properties}
We have not been able to completely characterise when matrix equilibrium states are mixing, and this is a natural problem for future research. This depends in particular on the problem of characterising exactly when the $M^n$-tuple of products $A_{i_1}\cdots A_{i_n}$ is reducible while the original $M$-tuple $(A_1,\ldots,A_M)$ is irreducible. In two dimensions this problem seems likely to be accessible by elementary arguments, but in higher dimensions the situation is less clear. We however make the following conjecture:
\begin{conjecture}
Let $\mathsf{A}\in M_d(\mathbb{R})^M$ be irreducible. Then:
\begin{enumerate}[(i)]
\item
If for some $s>0$ the equilibrium state of $(\mathsf{A},s)$ is ergodic with respect to $\sigma^d$, then it is mixing with respect to $\sigma$.
\item
If there exists $s>0$ such that the equilibrium state of $(\mathsf{A},s)$ is mixing with respect to $\sigma$, then for \emph{every} $s>0$ the equilibrium state of $(\mathsf{A},s)$ is mixing with respect to $\sigma$.
\end{enumerate}
\end{conjecture}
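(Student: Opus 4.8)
The plan is to establish the cycle of implications (i)$\implies$(ii)$\implies$(iii)$\implies$(i), extracting the linear-algebraic content from Theorem \ref{th:cks} and Theorem \ref{th:be2} and using the subadditive variational principle to identify the measure of maximal entropy. The role of the invertibility hypothesis is exactly to promote the ``spectral radius $0$ or $1$'' dichotomy of Theorem \ref{th:cks} into a genuine orthogonal normal form.

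First I would prove (i)$\implies$(ii). Condition (i) is literally condition (iv) of Theorem \ref{th:cks}, so that theorem supplies $\lambda \in \mathbb{R}$ such that every element of $\mathcal{S}(e^{-\lambda}\mathsf{A})$ has spectral radius $0$ or $1$; since every $A_i$ is invertible, so is every product, so the value $0$ cannot occur and hence $\rho(A_{x_n}\cdots A_{x_1}) = e^{\lambda n}$ for every word $(x_1,\ldots,x_n)$. By the Berger--Wang identity \eqref{eq:bwf} this gives $\varrho_\infty(\mathsf{A}) = e^{\lambda}$, and combining the identity $\log\varrho_\infty(\mathsf{A}) = \sup_{\nu \in \mathcal{M}_\sigma}\Lambda(\mathsf{A},\nu)$ (as used in the proof of Theorem \ref{th:entropy}) with Gelfand's formula evaluated along periodic orbits --- exactly the reasoning of the proof of Theorem \ref{th:char} --- one deduces $\Lambda(\mathsf{A},\nu) = \lambda$ for every $\nu \in \mathcal{M}_\sigma$. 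The variational principle then yields $P(\mathsf{A},s) = \sup_{\nu} h(\nu) + s\lambda = \log M + s\lambda$ for every $s > 0$, and this supremum is attained uniquely at the unique measure of maximal entropy of the full shift; hence for every $s$ the equilibrium state of $(\mathsf{A},s)$ is that measure, which gives (ii).

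Next I would prove (ii)$\implies$(iii). If the equilibrium state $\mu$ of $(\mathsf{A},s)$ is the measure of maximal entropy then $\mu$ is the uniform Bernoulli measure and in particular fully supported, and every $A_i$ is invertible by hypothesis, so condition (i) of Theorem \ref{th:be2} holds and that theorem provides $B \in GL_d(\mathbb{R})$ with $|\det A_i|^{-1/d}B^{-1}A_iB \in O(d)$ for every $i$. Conjugating, we may take $B$ to be the identity; then every product $A_{x_n}\cdots A_{x_1}$ has all its singular values equal to $|\det(A_{x_n}\cdots A_{x_1})|^{1/d} = \prod_{j=1}^n |\det A_{x_j}|^{1/d}$, so $\|A_{x_n}\cdots A_{x_1}\|^s = \prod_{j=1}^n |\det A_{x_j}|^{s/d}$. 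Substituting this into the Gibbs inequality \eqref{eq:Gibbs} and comparing with $\mu([x_1\cdots x_n]) = M^{-n}$ --- testing on the constant words $(i,i,\ldots,i)$ so that the bounded multiplicative constant is forced to be $1$ in the limit --- gives $|\det A_i|^{s/d} = e^{P(\mathsf{A},s)}/M$ for every $i$, so $|\det A_i|^{1/d}$ is independent of $i$; writing this common value as $e^{-\lambda}$, we obtain $e^{-\lambda}B^{-1}A_iB = |\det A_i|^{-1/d}B^{-1}A_iB \in O(d)$ for every $i$, which is (iii).

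Finally, (iii)$\implies$(i): since Lyapunov exponents are unaffected by a change of basis or of norm, we may again take $B$ to be the identity, and then $A_{x_n}\cdots A_{x_1}$ is $e^{\lambda n}$ times an orthogonal matrix, so $\|A_{x_n}\cdots A_{x_1}\| = e^{\lambda n}$ and $\Lambda(\mathsf{A},\nu) = \lambda$ for every $\nu \in \mathcal{M}_\sigma$; as before the variational principle shows that for every $s>0$ the equilibrium state of $(\mathsf{A},s)$ is the unique measure of maximal entropy, so it is the same for every $s$, establishing (i) and closing the cycle. Since Theorems \ref{th:cks} and \ref{th:be2} are already available, no step is a serious obstacle; the only points demanding care are the passage from constancy of the spectral radii of all products to constancy of $\Lambda(\mathsf{A},\cdot)$ (which simply reuses the joint-spectral-radius estimates behind Theorem \ref{th:entropy}), and ensuring in (ii)$\implies$(iii) that the normalising exponent is matched exactly rather than up to a bounded factor, which is why that comparison is run along the constant words.
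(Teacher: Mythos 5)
Your proposal does not address the stated result. The statement here is the conjecture from \S\ref{se:end} on mixing: part (i) asserts that if the equilibrium state of $(\mathsf{A},s)$ is ergodic with respect to $\sigma^d$ for some $s$ then it is mixing with respect to $\sigma$, and part (ii) asserts that mixing for one value of $s$ implies mixing for every $s$. The paper poses this as an open problem and offers no proof. What you have written is instead a proof of Theorem \ref{th:whatever} --- the characterisation, for irreducible tuples of \emph{invertible} matrices, of when the equilibrium state is independent of $s$ --- and indeed your three conditions (independence of $s$, coincidence with the measure of maximal entropy, conjugacy of the $e^{-\lambda}A_i$ into $O(d)$) are exactly the three clauses of that theorem, with your cycle of implications closely tracking the paper's own argument via Theorems \ref{th:cks} and \ref{th:be2}. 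Nothing in your argument engages with mixing, total ergodicity, or ergodicity with respect to powers of the shift, which are the entire content of the conjecture; moreover the conjecture carries no invertibility hypothesis, whereas your argument depends on invertibility at every stage (to exclude spectral radius $0$, and to invoke Theorem \ref{th:be2}(i)).

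Concretely, then, the gap is total: you would need an argument showing (for part (i)) that $\sigma^d$-ergodicity of $\mu$ forces the irreducibility-type condition of Theorem \ref{th:mix}(i), or otherwise yields the correlation bound \eqref{eq:ac}, and (for part (ii)) that the mixing or non-mixing of the equilibrium state is a property of $\mathsf{A}$ alone, uniform in $s$. The relevant obstruction, as the paper notes, is understanding when the $M^n$-tuple of length-$n$ products can be reducible while $\mathsf{A}$ itself is irreducible (compare Proposition \ref{pr:notmix}, where the equilibrium state fails to be $\sigma^2$-ergodic for \emph{every} $s$); none of the machinery you invoke bears on that question.
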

We have so far paid no attention to stronger forms of mixing. In view of the quantitative mixing properties of the equilibrium states of real-valued potentials -- and of results in the case $s=2$ which are described in the appendix --  it is natural to ask whether cylinder sets mix exponentially rapidly with respect to matrix equilibrium states. 
We have also not investigated the Kolmogorov or Bernoulli properties of matrix equilibrium states (or strictly speaking, of their natural extensions). We pose the following problem for future researchers:
\begin{conjecture}\label{cj:conjbern}
The natural extension of every totally ergodic matrix equilibrium state has the Bernoulli property.
\end{conjecture}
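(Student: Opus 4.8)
The plan is to establish the stronger conclusion that every totally ergodic matrix equilibrium state $\mu$ is \emph{weak Bernoulli} in the sense of Ornstein, i.e.\ that its two-sided process is absolutely regular ($\beta$-mixing with $\beta(n)\to 0$); by the Friedman--Ornstein theorem this forces the natural extension of $(\Sigma_M,\sigma,\mu)$ to be measurably isomorphic to a Bernoulli shift. By Theorem~\ref{th:mix}(ii) total ergodicity already yields mixing, and the estimate in the proof of Theorem~\ref{th:mix}(ii) supplies, for free, one half of what is required: for words $u,v$ of length at most $m$ separated by a gap of length $N$ one has $\mu([u]\cap\sigma^{-(m+N)}[v])\le C^4\mu([u])\mu([v])$, uniformly in $m$ and $N$. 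What must be added is the matching lower estimate, namely that $\sum_{|u|=|v|=m}\bigl|\mu([u]\cap\sigma^{-(m+N)}[v])-\mu([u])\mu([v])\bigr|\to 0$ as $N\to\infty$, uniformly in $m$; equivalently, that the ``gap word'' $w$ typically realigns the dominant direction of $A_wA_u$ with the dominant covector of $A_v$ well enough to make $\|A_vA_wA_u\|$ comparable to $\|A_v\|\,\|A_w\|\,\|A_u\|$.

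Concretely, by the Gibbs inequality \eqref{eq:Gibbs} the quantity $\mu([u]\cap\sigma^{-(m+N)}[v])$ is comparable to $e^{-(2m+N)P(\mathsf{A},s)}\sum_{|w|=N}\|A_vA_wA_u\|^s$, and the only obstruction to the lower bound is that $\|A_vA_wA_u\|$ can be far smaller than $\|A_v\|\,\|A_w\|\,\|A_u\|$ when the top singular vector of $A_wA_u$ is nearly orthogonal to the top singular covector of $A_v$. To control the average over $w$ one needs an equidistribution statement: as $w$ runs over $\{1,\dots,M\}^N$ weighted by the equilibrium state, the direction (or the relevant partial flag) of $A_wA_u$ should converge, as $N\to\infty$, to a stationary measure on $\mathbb{P}^{d-1}$ that is independent of $u$, together with a non-degeneracy input --- that this stationary measure is not carried by the ``bad'' directions for the generators (those killed by some $A_i$ or persistently orthogonal to its receptive covector) --- which irreducibility and Lemma~\ref{le:ire} should be exactly strong enough to provide. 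The first genuine step is therefore a convergence theorem for this ``Furstenberg boundary measure'' attached to a (non-product, Gibbs) matrix equilibrium state.

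I expect this convergence statement to be the main obstacle, and the difficulty to split along familiar lines. When $\mathsf{A}$ is strongly irreducible and proximal the required equidistribution, with an exponential rate, should follow by transporting the spectral-gap and contraction methods of Furstenberg theory (in the vein of Bougerol, Guivarc'h--Raugi, or Benoist--Quint) from the i.i.d.\ random matrix product to the matrix transfer operator underlying the equilibrium state; this would in fact give exponential $\beta$-mixing. The hard case is the general one: if $\mathsf{A}$ is irreducible but not strongly irreducible, or not proximal, the boundary dynamics can carry a nontrivial factor --- coming for instance from a reducibility of $\mathsf{A}$ over the reals after passing to a finite orbit of subspaces, or from rotational parts as in the examples following Proposition~\ref{pr:notmix} --- and Bernoullicity then seems to require a \emph{relative} Ornstein theory: one would show that $\hat\mu$ is a relatively very weak Bernoulli extension of this angular factor and that the factor is itself Bernoulli by induction, exploiting that the $M^n$-tuples $\mathsf{A}_n$ of all length-$n$ products all have $\mu$ as equilibrium state (so total ergodicity, hence the inductive hypothesis, is inherited). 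Carrying out the relative estimate and ruling out a surviving rotation-type factor --- which total ergodicity should forbid, but which must be made quantitative --- is where I expect the real work to lie. A possible alternative to the $\beta$-mixing route is a direct verification of the very weak Bernoulli property by coupling: couple the conditional measures of $\mu$ given two different pasts by waiting for a synchronising gap word of the type produced by Lemma~\ref{le:ire}, the task being to bound the expected waiting time; the analytic heart of that bound is once again the contraction input above.
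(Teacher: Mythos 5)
This statement is posed in the paper as an open problem (Conjecture~\ref{cj:conjbern}); the paper contains no proof of it, so the only question is whether your argument actually closes it, and it does not. What you have written is a research programme in which every decisive step is explicitly deferred. The only estimate genuinely available from the paper is the one-sided bound $\mu(X\cap\sigma^{-n}Y)\le C^{4}\mu(X)\mu(Y)$ from the proof of Theorem~\ref{th:mix}(ii); this cannot yield absolute regularity, and even a matching two-sided Gibbs-type bound with a fixed constant $C^{4}$ would not force $\beta(n)\to 0$, since weak Bernoulli requires the correlations to converge, not merely to be comparable. The convergence you need is exactly the ``Furstenberg boundary'' equidistribution you postulate, and that is asserted only by analogy with i.i.d.\ random matrix products: here $\mu$ is a non-product Gibbs measure for which no transfer-operator spectral theory is established in the paper, the matrices need not be invertible (only irreducibility is assumed, so products can annihilate directions and the projective dynamics need not be everywhere defined), and strong irreducibility and proximality can fail. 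You yourself identify this general case as ``the hard case'' and leave both the relative Ornstein argument and the quantitative exclusion of rotation-type factors unproved; the coupling alternative at the end rests on the same missing contraction input.

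So the gap is not a technical slip but the absence of the core analytic content: a convergence/contraction theorem for the boundary behaviour of general irreducible (possibly non-invertible, non-proximal) tuples under the equilibrium state, together with the relative very-weak-Bernoulli estimate over whatever angular factor survives. Naming these ingredients and indicating where they should come from is a reasonable plan of attack, and the reduction via Friedman--Ornstein to absolute regularity is a sensible target, but as written the proposal establishes nothing beyond what Theorem~\ref{th:mix} already gives, and the conjecture remains open.
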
 

\subsection{When is the equilibrium state independent of $s$?}

In Theorem \ref{th:cks} it was shown that an irreducible $M$-tuple $\mathsf{A}\in M_d(\mathbb{R})$ has the same equilibrium state $\mu$ for every $s>0$ if and only if there is a constant $\lambda$ such that every element of $\mathcal{S}(e^{-\lambda} \mathsf{A})$ has spectral radius either $0$ or $1$. In the case of invertible matrices this situation was characterised completely in Theorem \ref{th:whatever}. Other than in the invertible case --  in which case a spectral radius of $0$ of course cannot occur -- no attempt at characterising irreducible matrix semigroups in which every matrix has spectral radius $0$ or $1$ seems to have been made in the literature. Indeed, even the characterisation of irreducible matrix semigroups in which every element has spectral radius $1$ is not yet complete (see \cite{OmRa97,PrVo14}). It is interesting to ask what necessary or sufficient conditions can be found for $(A_1,\ldots,A_M)$ to generate a semigroup with this property. It is also not clear exactly which measures can arise as the equilibrium state of such a tuple, although by Theorem \ref{th:whatever} this set of measures includes the measure of maximal entropy, and by adapting the examples mentioned in \S\ref{se:entropy} it may be seen to also include periodic orbits whose length divides $d$.

\subsection{Criteria for $\overline{d}$-continuity}

The $\overline{d}$-metric, or Ornstein metric, is a metric on the set of $\sigma$-invariant measures on $\{1,\ldots,M\}^{\mathbb{Z}}$ which captures dynamical features of measures more effectively than the weak-* topology. For example, with respect to the $\overline{d}$-metric entropy is continuous, and the set of measures with the Bernoulli property, the set of mixing measures and the set of ergodic measures are all closed sets. A definition of the $\overline{d}$-metric and proofs of some of its basic properties may be found in \cite{Gl03,Sh96}. Z. Coelho and A. Quas have shown that the equilibrium states of real-valued potentials $f \colon \Sigma_M \to \mathbb{R}$ which satisfy \eqref{eq:holder} depend $\overline{d}$-continuously on the potential in an appropriate topology on the space of potentials \cite{CoQu98}. It would be of interest to determine exactly when matrix equilibrium states (or strictly speaking, their natural extensions) depend $\overline{d}$-continuously on the matrix entries.

Using the results in this article it is possible to show that the equilibrium states of irreducible matrix sets do not always depend $\overline{d}$-continuously on the matrix entries: one may verify easily that for every $\varepsilon>0$ the pair $\mathsf{A}_\varepsilon:=(A_1,A_2)$ given by
\[A_1:=\left(\begin{array}{cc}0&2\\ 1&0 \end{array} \right),\qquad A_2:=\left(\begin{array}{cc}\varepsilon &1\\ 2&0\end{array} \right)\]
is strongly irreducible, and therefore by Theorem \ref{th:mix} the unique equilibrium state of $(\mathsf{A}_\varepsilon,1)$ is mixing. On the other hand by Proposition \ref{pr:notmix}, the limit as $\varepsilon \to 0$ is a pair $\mathsf{A}_0$ such that the unique equilibrium state of $(\mathsf{A}_0,1)$ is \emph{not} mixing. This in particular implies that the said equilibrium states do not vary $\overline{d}$-continuously in the limit $\varepsilon \to 0$. We nonetheless make the following conjectures:
\begin{conjecture}
Let $\mathsf{A} \in M_d(\mathbb{R})^M$ be irreducible. Then the unique equilibrium state $\mu_s$ of $(\mathsf{A},s)$ depends $\overline{d}$-continuously on $s$.
\end{conjecture}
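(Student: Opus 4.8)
The plan is to split the statement into a \emph{soft} part, establishing joint continuity of the coarse thermodynamic data in $s$, and a \emph{hard} part, upgrading weak-$*$ convergence to $\overline{d}$-convergence. For the soft part, fix a compact interval $[s_0,s_1]\subset(0,\infty)$. First I would check that the Gibbs constant in \eqref{eq:Gibbs} may be taken uniform for $s\in[s_0,s_1]$: in the Feng--K\"aenm\"aki proof it is controlled by bounds for $\varrho_\infty(\mathsf A)$, for $P(\mathsf A,s)$, and for the quasi-multiplicativity constant $\delta$ of Lemma \ref{le:ire}, and of these only $P(\mathsf A,s)$ depends on $s$, continuously since $s\mapsto P(\mathsf A,s)$ is convex and finite on $(0,\infty)$. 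Consequently, writing $A_w:=A_{w_n}\cdots A_{w_1}$ for a word $w$ of length $n$, one obtains the uniform comparison
\[\frac{\mu_s([w])}{\mu_t([w])}\le C^2\,e^{\,n(P(\mathsf A,t)-P(\mathsf A,s))}\,\|A_w\|^{\,s-t}\qquad(s,t\in[s_0,s_1]).\]
Next I would establish weak-$*$ continuity of $s\mapsto\mu_s$ by a limit-point argument: if $s_n\to s_*$ and $\mu_{s_n}\to\mu$ weak-$*$, then upper semicontinuity of $\nu\mapsto h(\nu)$ and of $\nu\mapsto\Lambda(\mathsf A,\nu)$ (the latter an infimum of affine upper-semicontinuous functionals), together with $h(\mu_{s_n})+s_n\Lambda(\mathsf A,\mu_{s_n})=P(\mathsf A,s_n)\to P(\mathsf A,s_*)$, forces $h(\mu)+s_*\Lambda(\mathsf A,\mu)\ge P(\mathsf A,s_*)$, whence $\mu=\mu_{s_*}$ by uniqueness. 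Comparing the two upper-semicontinuous quantities separately then shows that $h(\mu_s)$ and $\Lambda(\mathsf A,\mu_s)$ themselves depend continuously on $s$.

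The hard part is to upgrade this to $\overline{d}$-continuity, and here the difficulty is genuine: weak-$*$ convergence together with entropy convergence does not in general force $\overline{d}$-convergence, as the pair $\mathsf A_\varepsilon$ considered in Section \ref{se:end} shows under perturbation of the matrix entries. What should rescue the $s$-family is that the qualitative dynamical type of $\mu_s$ is $s$-independent. Concretely, I would invoke Ornstein's theorem that a shift-invariant measure $\nu$ is \emph{finitely determined} --- equivalently, its natural extension is Bernoulli --- precisely when weak-$*$ convergence of invariant measures to $\nu$ together with convergence of entropies implies $\overline{d}$-convergence to $\nu$. Thus if the natural extension of every totally ergodic $\mu_s$ is Bernoulli --- this is Conjecture \ref{cj:conjbern} --- then the soft part immediately gives $\overline{d}$-continuity at every $s$ for which $\mu_s$ is totally ergodic. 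For the remaining parameters I would show that the periodic structure of $\mu_s$ is locally constant in $s$: there exist an integer $p\ge1$ and a $\sigma$-cyclic Borel partition $\Sigma_M=\bigsqcup_{i=0}^{p-1}E_i$ with $\mu_s=\frac1p\sum_{i=0}^{p-1}(\sigma^i)_*\mu_s^{(0)}$ where $\mu_s^{(0)}$ is totally ergodic for $\sigma^p$; then $\overline{d}(\mu_s,\mu_{s_0})$ is bounded by $\overline{d}(\mu_s^{(0)},\mu_{s_0}^{(0)})$ computed over the alphabet $\{1,\ldots,M\}^p$, reducing matters to the totally ergodic case already treated.

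This plan therefore reduces the statement to two genuinely open points: (a) Conjecture \ref{cj:conjbern}, the Bernoulli property of the natural extensions of totally ergodic matrix equilibrium states; and (b) the $s$-independence of the integer $p$ and of the partition $\{E_i\}$, equivalently --- since mixing is a $\overline{d}$-closed property --- the assertion that mixing of $\mu_s$ for a single value of $s$ forces it for all $s$. I expect (a) to be the principal obstacle. The natural attack on it is to verify the very-weak-Bernoulli condition directly from the Gibbs inequality \eqref{eq:Gibbs} and the quasi-multiplicativity of irreducible matrix cocycles, by building a coupling between the conditional measures of $\mu_s$ on long future blocks given disjoint past blocks; the delicate step --- and the one that fails precisely in non-mixing examples such as that of Proposition \ref{pr:notmix} --- is to render the resulting asymptotic-independence estimates quantitative and uniform, which is where the stronger irreducibility of the block tuples $\mathsf A_n$ from the proof of Theorem \ref{th:mix} would have to be exploited.
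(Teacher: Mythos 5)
This statement is posed in the paper as an open conjecture: no proof of it is given there, and nothing in the paper claims to establish it. Your proposal does not close that gap. What you call the ``soft part'' --- uniform Gibbs constants on compact $s$-intervals, weak-$*$ continuity of $s\mapsto\mu_s$ via continuity of the pressure and upper semicontinuity of entropy and Lyapunov exponents, and hence convergence of $h(\mu_s)$ --- is correct, but it only reproduces the argument the paper itself already records in the discussion following Conjecture \ref{cj:conjdbar} (the convergence \eqref{eq:converge} and the entropy convergence). The step that would actually yield $\overline{d}$-continuity is exactly the step you leave open: you invoke the finitely-determined criterion, which requires the Bernoulli property of the natural extension of $\mu_s$, i.e.\ Conjecture \ref{cj:conjbern}, and for the non--totally-ergodic parameters you additionally need the period of the cyclic decomposition to be locally constant in $s$, which is essentially the paper's other open conjecture that mixing of $\mu_s$ for one $s$ forces it for all $s$. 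A proof conditional on two unresolved conjectures is a reduction, not a proof, and you acknowledge as much.

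Two further cautions if you pursue this route. First, the finitely-determined criterion is simply unavailable at parameters where $\mu_s$ is ergodic but not totally ergodic (as in Proposition \ref{pr:notmix}), since such measures are not mixing and hence not Bernoulli; your cyclic-decomposition workaround therefore carries real weight, and it needs more than local constancy of the period: you must also identify $\mu_s^{(0)}$ as the equilibrium state of an irreducible block tuple over the alphabet $\{1,\ldots,M\}^p$ (Theorem \ref{th:Gibbs} only guarantees this when the block tuple is irreducible, which is precisely the hypothesis that fails in the non-mixing examples), and you must verify that $\overline{d}$-distance for $\sigma$ is genuinely controlled by the $\overline{d}$-distance of the components for $\sigma^p$, with the \emph{same} labelling of components as $s$ varies. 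Second, your sketch for attacking Conjecture \ref{cj:conjbern} via very-weak-Bernoulli estimates from \eqref{eq:Gibbs} and Lemma \ref{le:ire} is plausible in outline but is exactly the part the paper flags as open; the Gibbs inequality alone gives only the two-sided comparison \eqref{eq:ac}-type bounds, which, as the paper's weak-mixing argument shows, fall well short of the quantitative conditional-coupling estimates that very weak Bernoulli requires.
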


\begin{conjecture}\label{cj:conjdbar}
For each irreducible $\mathsf{B} \in M_d(\mathbb{R})^M$ and $t>0$ let $\mu_{\mathsf{B},t}$ denote the unique equilibrium state of $(\mathsf{B},t)$. Suppose that $\mathsf{A} \in M_d(\mathbb{R})^M$ is irreducible and that $\mu_{\mathsf{A},s}$ is mixing. Then $(\mathsf{B},t)\mapsto \mu_{\mathsf{B},t}$  is $\overline{d}$-continuous at $(\mathsf{A},s)$.
\end{conjecture}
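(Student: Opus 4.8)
The plan is to realise each equilibrium state near $(\mathsf{A},s)$ as a $\overline{d}$-limit of finite-memory Markov measures and to control this approximation uniformly over a neighbourhood of $(\mathsf{A},s)$. First one observes that the set of irreducible tuples is open, since the set of reducible tuples is closed: if $\mathsf{B}_n\to\mathsf{A}$ with each $\mathsf{B}_n$ reducible, pass to a subsequence along which the common invariant proper nonzero subspaces have a fixed dimension and, by compactness of the Grassmannian, converge; the limit is then a common invariant subspace of $\mathsf{A}$ of that dimension. Since also $(\mathsf{B},t)\mapsto P(\mathsf{B},t)$ is continuous and $P(\mathsf{A},s)>-\infty$, the measures $\mu_{\mathsf{B},t}$ are well defined on a neighbourhood $U$ of $(\mathsf{A},s)$ and satisfy the Gibbs inequality \eqref{eq:Gibbs}; a first technical task is to verify, by a careful pass through the Feng--K\"aenm\"aki argument, that the Gibbs constant may be taken uniform over a compact sub-neighbourhood of $(\mathsf{A},s)$ (the constant degrades near the boundary of the irreducible region, but $(\mathsf{A},s)$ is an interior point). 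Granting this, for each $k\geq1$ let $m^{(k)}_{\mathsf{B},t}$ be the $k$-step Markov measure whose transition data is read off from the cylinder probabilities $\mu_{\mathsf{B},t}([x_1\cdots x_{k+1}])$. These data depend continuously on $(\mathsf{B},t)$, since $\mu_{\mathsf{B},t}$ varies weak-$*$ continuously at irreducible points and cylinders are clopen; hence for each fixed $k$ the map $(\mathsf{B},t)\mapsto m^{(k)}_{\mathsf{B},t}$ is continuous in $\overline{d}$ at $(\mathsf{A},s)$, by the classical fact that Markov measures converging to a mixing Markov measure converge in the Ornstein metric (and $m^{(k)}_{\mathsf{A},s}$ is mixing for $k$ large, as $\mu_{\mathsf{A},s}$ is).

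Given these ingredients, one concludes as follows. Suppose one can produce $\varepsilon(k)\to0$ with $\overline{d}\big(\mu_{\mathsf{B},t},\,m^{(k)}_{\mathsf{B},t}\big)\leq\varepsilon(k)$ for all $(\mathsf{B},t)\in U$ and all $k$. Then
\[\overline{d}\big(\mu_{\mathsf{B},t},\mu_{\mathsf{A},s}\big)\leq \overline{d}\big(\mu_{\mathsf{B},t},m^{(k)}_{\mathsf{B},t}\big)+\overline{d}\big(m^{(k)}_{\mathsf{B},t},m^{(k)}_{\mathsf{A},s}\big)+\overline{d}\big(m^{(k)}_{\mathsf{A},s},\mu_{\mathsf{A},s}\big)\]
bounds the outer terms by $\varepsilon(k)$ and lets the middle term tend to $0$ as $(\mathsf{B},t)\to(\mathsf{A},s)$ with $k$ fixed; letting $k\to\infty$ afterwards gives $\limsup_{(\mathsf{B},t)\to(\mathsf{A},s)}\overline{d}(\mu_{\mathsf{B},t},\mu_{\mathsf{A},s})=0$, which is the assertion.

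The mixing hypothesis has to enter in the uniform approximation bound $\overline{d}(\mu_{\mathsf{B},t},m^{(k)}_{\mathsf{B},t})\leq\varepsilon(k)$, and this is where I expect the real difficulty to lie. That the hypothesis is genuinely needed is shown by Proposition \ref{pr:notmix}: there $\mu_{\mathsf{A}_0,1}$ is a proper convex combination of two distinct $\sigma^2$-invariant measures, and the strongly irreducible perturbations $\mathsf{A}_\varepsilon$ of \S\ref{se:end} then have mixing equilibrium states that cannot $\overline{d}$-converge to it, so $\overline{d}$-continuity fails at this non-mixing point even though weak-$*$ continuity persists. For a mixing $\mu_{\mathsf{A},s}$ one would like to combine the limsup bound \eqref{eq:ac} -- a one-sided $\psi$-mixing estimate holding for the equilibrium state of any irreducible tuple -- with the qualitative mixing of $\mu_{\mathsf{A},s}$ to obtain a quantitative, perturbation-stable coupling of $\mu_{\mathsf{B},t}$ with its own $k$-step Markov approximation. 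The natural mechanism is a spectral-gap statement for the (in general non-linear) transfer operator on $C(\mathbb{P}^{d-1})$ underlying the Feng--K\"aenm\"aki theory, whose relevant spectral quantities vary continuously -- and, when $s$ is an even integer, real-analytically after linearising via Kronecker powers as in Proposition \ref{pr:reduck} -- with $(\mathsf{B},t)$; this would give an exponential, hence $\psi$-, mixing rate stable under perturbation, to be fed into Coelho--Quas-type arguments \cite{CoQu98}. The missing input is exactly such a perturbation-stable rate of mixing: no rate of mixing at all is established for matrix equilibrium states in the present article, and supplying one in a form uniform over a neighbourhood seems to require substantial new work -- which is why the assertion is posed here only as a conjecture.
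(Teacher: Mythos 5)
You should first be clear that the statement you are proving is posed in the paper as a conjecture: the paper contains no proof of it, and your own write-up, quite candidly, does not contain one either -- the uniform bound $\overline{d}\bigl(\mu_{\mathsf{B},t},m^{(k)}_{\mathsf{B},t}\bigr)\leq\varepsilon(k)$ over a whole neighbourhood of $(\mathsf{A},s)$ is exactly the step you leave open, so what you have is a programme, not a proof. Within that programme the soft ingredients are fine (openness of irreducibility, continuity of pressure, weak-$*$ continuity of $(\mathsf{B},t)\mapsto\mu_{\mathsf{B},t}$ at irreducible points, mixingness of the $k$-step Markovizations of a mixing measure, and the role of Proposition \ref{pr:notmix} as the obstruction at non-mixing points), and these match what the paper establishes in \S\ref{se:end}.

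Where your route differs from the paper's is instructive. The paper does not attempt any quantitative, perturbation-uniform estimate; instead it observes that Conjecture \ref{cj:conjbern} (the Bernoulli property of the natural extension of any totally ergodic matrix equilibrium state) already implies Conjecture \ref{cj:conjdbar}, by a purely soft argument: weak-$*$ convergence $\mu_{\mathsf{B},t}\to\mu_{\mathsf{A},s}$ and convergence of entropies $h(\mu_{\mathsf{B},t})\to h(\mu_{\mathsf{A},s})$ are proved unconditionally (continuity of pressure plus upper semicontinuity of entropy and Lyapunov exponents), and for a finitely-determined (Bernoulli) limit process these two convergences force $\overline{d}$-convergence \cite[\S IV.2]{Sh96}. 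Your scheme never uses the entropy convergence, and it asks for strictly more than the paper's reduction needs: uniform Gibbs constants and a perturbation-stable ($\psi$-)mixing rate feeding a Coelho--Quas-type argument. Note also that your missing step is at least as strong as the paper's conditional hypothesis: if you could prove $\overline{d}(\mu_{\mathsf{A},s},m^{(k)}_{\mathsf{A},s})\to0$ even at the single point $(\mathsf{A},s)$, then $\mu_{\mathsf{A},s}$ would be a $\overline{d}$-limit of mixing Markov (hence Bernoulli) measures and therefore Bernoulli, i.e.\ you would in particular have established the relevant case of Conjecture \ref{cj:conjbern}; conversely, mixing alone does not give the finitely-determined property, so the classical Markov-approximation fact you invoke cannot be transferred to $\mu_{\mathsf{B},t}$ without such an input. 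In short: no identifiable error, but the key step is missing by your own admission, and the paper's (conditional) route shows that the natural target is the Bernoulli/finitely-determined property of the limit measure rather than a neighbourhood-uniform rate of mixing, which is a weaker and more plausible intermediate goal.
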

We remark that if Conjecture \ref{cj:conjbern} is true then Conjecture \ref{cj:conjdbar} follows directly. To see this we note that 
\begin{equation}\label{eq:converge}\lim_{(\mathsf{B},t)\to (\mathsf{A},s)}\mu_{\mathsf{B},t}=\mu_{\mathsf{A},s}\end{equation}
in the weak-* topology by the following argument. By weak-* compactness, for every sequence of pairs $(\mathsf{B},t)$ converging to $(\mathsf{A},s)$ the sequence of measures $\mu_{\mathsf{B},t}$ has an accumulation point. To prove \eqref{eq:converge} it suffices to show that the only possible accumulation point is $\mu_{\mathsf{A},s}$. It is shown in \cite{FeSh14,Mo15} that the pressure $P(\mathsf{B},t)$ depends continuously on $(\mathsf{B},t)$. By this fact together with the upper semicontinuity of entropy and of Lyapunov exponents it follows that any accumulation point of $\mu_{\mathsf{B},t}$ as $(\mathsf{B},t) \to (\mathsf{A},s)$ must be an equilibrium state of $(\mathsf{A},s)$, which by uniqueness must equal $\mu_{\mathsf{A},s}$. The convergence \eqref{eq:converge} follows.
One may also show that necessarily
\[\lim_{(\mathsf{B},t)\to (\mathsf{A},s)}h(\mu_{\mathsf{B},t})=h(\mu_{\mathsf{A},s}),\]
since
\[\limsup_{(\mathsf{B},t)\to (\mathsf{A},s)}h(\mu_{\mathsf{B},t})\leq h(\mu_{\mathsf{A},s})\]
by the upper semi-continuity of entropy, and 
\[\liminf_{(\mathsf{B},t)\to (\mathsf{A},s)}h(\mu_{\mathsf{B},t}) =\liminf_{(\mathsf{B},t)\to (\mathsf{A},s)}\left(P(\mathsf{B},t)-t\Lambda(\mathsf{B},t)\right) \geq P(\mathsf{A},s)-s\Lambda(\mathsf{A},s) = h(\mu_{\mathsf{A},s})\]
by the continuity of pressure and the upper semi-continuity of Lyapunov exponents. Since $\mu_{\mathsf{A},s}$ by hypothesis has the Bernoulli property, these two limits together imply convergence in $\overline{d}$ by the finitely-determined property of Bernoulli processes (see e.g. \cite[\S IV.2]{Sh96}).

\subsection{Equilibrium states of the singular value function}
A further problem for future research is the extension of the results of this article to the equilibrium states of the \emph{singular value function}, defined as follows. If $A \in M_d(\mathbb{R})$ and $0<s<d$ then we define
\[\varphi^s(A):=\alpha_1(A)\cdots \alpha_{\lfloor s\rfloor}(A) \alpha_{\lceil s \rceil}(A)^{s-\lfloor s\rfloor},\]
where $\alpha_0(A):=1$, and for $s\geq d$ we define $\varphi^s(A):=|\det A|^{s/d}$. One may show that $\varphi^s(AB) \leq \varphi^s(A)\varphi^s(B)$ for every $A,B \in M_d(\mathbb{R})$ and $s>0$, and that $\varphi^s \colon M_d(\mathbb{R})\to \mathbb{R}$ is upper semi-continuous. Given $\mathsf{A}=(A_1,\ldots,A_M)\in M_d(\mathbb{R})^M$, an equilibrium state of $\varphi^s$ is defined to be a measure $\mu \in \mathcal{M}_\sigma $ which maximises the quantity
\[h(\mu)+ \lim_{n\to\infty}\frac{1}{n} \int \log\varphi^s(A_{x_n}\cdots A_{x_1})d\mu(x),\]
and the existence of at least one such equilibrium state for given $\mathsf{A}$ and $s$ is guaranteed by an upper semi-continuity argument.
When $0<s \leq 1$ we simply have $\varphi^s(A)=\|A\|^s$, and when $d-1 \leq s <d$ we have
\[\varphi^s(A)=\left\||\det A|^{\frac{s+1-d}{d-s}}A^{\wedge(d-1)}\right\|^{d-s},\]
so in these parameter ranges the equilibrium states of $\varphi^s$ may be understood in terms of the norm equilibrium states studied in the present article. (Indeed, this reduction is implicitly exploited in the article \cite{MoSh16}.) In view of applications of these equilibrium states in the dimension theory of self-affine fractals (see e.g. \cite{BaRa15,FaKe15,FeSh14,KaRe14,MoSh16}) it would be of interest to be able to extend the results of this article to the equilibrium states of the singular value function in the parameter range $1<s<d-1$. However, at present no analogue of Theorem \ref{th:Gibbs} for these equilibrium states is known. 

\section{Acknowledgements}
The problems investigated in this article were suggested in part by recent joint research of the author and Pablo Shmerkin \cite{MoSh16}. The author thanks Pablo Shmerkin for numerous helpful conversations, particularly with regard to Theorems \ref{th:mix}, \ref{th:be1} and \ref{th:be2}. The author was supported by the Engineering and Physical Sciences Research Council (grant number EP/L026953/1). 

In respect of RCUK policies on publicly-funded research data, the author notes that no research data were generated in the course of this research.

\appendix
\section{Kusuoka measures and the case $s=2$}

 In this appendix we will show that in the case where $s=2$ both the pressure and the equilibrium state admit simple closed-form expressions, the latter in terms of the \emph{Kusuoka measures} defined by S. Kusuoka \cite{Ku89} which have been the subject of recent research \cite{BeHoSt14,JoObPo15,StTs10}.

Recall that a matrix $B \in M_d(\mathbb{R})$ is called \emph{positive semidefinite} if and only if $B^T=B$ and $\langle Bu,u\rangle \geq 0$ for every $u \in \mathbb{R}^d$. The matrix is called \emph{positive definite} if additionally $\langle Bu,u\rangle>0$ when the vector $u$ is nonzero. The matrix $B$ is positive semidefinite (resp. positive definite) if and only if it has the form $B=U^TU$ for some matrix $U \in M_d(\mathbb{R})$ (resp. for some invertible matrix $U \in M_d(\mathbb{R})$), and this matrix $U$ can be taken to be upper triangular with non-negative diagonal entries. Every eigenvalue of a positive semidefinite matrix is real and non-negative. For our purposes a \emph{cone} in a finite-dimensional real vector space $V$ will be a closed convex set $K \subset V$ with nonempty interior such that $\lambda K = K$ for every real $\lambda>0$, and such that $K \cap (-K)=\{0\}$. Let $M_d^S(\mathbb{R})$ denote the vector space of symmetric real $d \times d$ matrices. The reader may trivially verify that the set of all positive semidefinite $d \times d$ matrices is a cone in $M_d^S(\mathbb{R})$ and that the set of all positive definite $d \times d$ matrices is the interior of that cone.

In order to define Kusuoka measures we require the following statement:
\begin{proposition}\label{pr:ku}
Let $\mathsf{A}=(A_1,\ldots,A_M)$ be irreducible, and define two linear maps $L_{\mathsf{A}}, \hat{L}_{\mathsf{A}} \colon M_d^S(\mathbb{R}) \to M_d^S(\mathbb{R})$ by
\[L_{\mathsf{A}}B:=\sum_{i=1}^M A_i^TBA_i,\qquad \hat{L}_{\mathsf{A}}B:=\sum_{i=1}^M A_iBA_i^T.\]
Then 
\begin{equation}\label{eq:prids}e^{P(\mathsf{A},2)}=\rho\left(L_{\mathsf{A}}\right)=\rho\left(\hat{L}_{\mathsf{A}}\right)=\rho\left(\sum_{i=1}^M A_i^{\otimes 2}\right),\end{equation}
and there exist unique positive definite matrices $Q,\hat{Q} \in M_d^S(\mathbb{R})$ such that $\mathrm{tr}\,Q\hat{Q}=1$ and
\begin{equation}\label{eq:kuid}L_{\mathsf{A}}Q=e^{P(\mathsf{A},2)}Q,\qquad \hat{L}_{\mathsf{A}}\hat{Q}=e^{P(\mathsf{A},2)}\hat{Q}.\end{equation}
\end{proposition}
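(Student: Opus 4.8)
The plan is to treat $L_{\mathsf{A}}$ and $\hat L_{\mathsf{A}}$ as positive operators on the cone of positive semidefinite matrices, reading off $Q$ and $\hat Q$ from Perron--Frobenius theory, after first identifying the relevant spectral radii with $e^{P(\mathsf{A},2)}$ by elementary estimates. I would begin by noting that, with respect to the inner product $\langle B,C\rangle:=\mathrm{tr}(BC)$ on $M_d^S(\mathbb{R})$, cyclicity of the trace gives $\langle L_{\mathsf{A}}B,C\rangle=\langle B,\hat L_{\mathsf{A}}C\rangle$ for all $B,C$, so that $\hat L_{\mathsf{A}}=L_{\mathsf{A}}^*$ and in particular $\rho(L_{\mathsf{A}})=\rho(\hat L_{\mathsf{A}})$. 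Iterating the definitions, $\hat L_{\mathsf{A}}^nB=\sum_{i_1,\ldots,i_n=1}^M(A_{i_1}\cdots A_{i_n})B(A_{i_1}\cdots A_{i_n})^T$ and $\bigl(\sum_{i=1}^MA_i^{\otimes2}\bigr)^n=\sum_{i_1,\ldots,i_n=1}^M(A_{i_1}\cdots A_{i_n})^{\otimes2}$. Submultiplicativity of the operator norm together with $\|X^{\otimes2}\|=\|X\|^2$ then bound both $\|\hat L_{\mathsf{A}}^n\|$ and $\|(\sum_iA_i^{\otimes2})^n\|$ by $\sum_{i_1,\ldots,i_n=1}^M\|A_{i_1}\cdots A_{i_n}\|^2=e^{nP(\mathsf{A},2)+o(n)}$ in view of \eqref{eq:pressure}, so that $\rho(\hat L_{\mathsf{A}})$ and $\rho(\sum_iA_i^{\otimes2})$ are at most $e^{P(\mathsf{A},2)}$. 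For the matching lower bound I would use that $\hat L_{\mathsf{A}}^n\mathrm{Id}=\sum_{i_1,\ldots,i_n=1}^M(A_{i_1}\cdots A_{i_n})(A_{i_1}\cdots A_{i_n})^T$ is positive semidefinite with trace $\sum_{i_1,\ldots,i_n}\|A_{i_1}\cdots A_{i_n}\|_{HS}^2$, a quantity lying between $\sum_{i_1,\ldots,i_n}\|A_{i_1}\cdots A_{i_n}\|^2$ and $d$ times that; since $\mathrm{tr}(\hat L_{\mathsf{A}}^n\mathrm{Id})\le d\,\|\hat L_{\mathsf{A}}^n\|$ this yields $\rho(\hat L_{\mathsf{A}})\ge e^{P(\mathsf{A},2)}$, while the fact that $M_d^S(\mathbb{R})$ is an invariant subspace for $\sum_iA_i^{\otimes2}$ on which that operator restricts to $\hat L_{\mathsf{A}}$ gives $\rho(\sum_iA_i^{\otimes2})\ge\rho(\hat L_{\mathsf{A}})$. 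Combined with $\rho(L_{\mathsf{A}})=\rho(\hat L_{\mathsf{A}})$ this establishes \eqref{eq:prids}, the common value being positive because $\mathsf{A}$ is irreducible.

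For the second assertion I would invoke Perron--Frobenius theory on the proper cone $\mathcal{P}\subset M_d^S(\mathbb{R})$ of positive semidefinite matrices, which is closed, satisfies $\mathcal{P}\cap(-\mathcal{P})=\{0\}$, and has nonempty interior consisting of the positive definite matrices. Since $A_i^TBA_i\succeq0$ whenever $B\succeq0$, the operator $L_{\mathsf{A}}$ maps $\mathcal{P}$ into itself, so the Krein--Rutman theorem provides an eigenvector $Q\in\mathcal{P}\setminus\{0\}$ with $L_{\mathsf{A}}Q=\rho(L_{\mathsf{A}})Q$, and likewise an eigenvector $\hat Q\in\mathcal{P}\setminus\{0\}$ with $\hat L_{\mathsf{A}}\hat Q=\rho(\hat L_{\mathsf{A}})\hat Q$; by the first part both eigenvalues equal $e^{P(\mathsf{A},2)}$. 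The decisive use of irreducibility is then to upgrade $Q$ (and $\hat Q$) to positive definite matrices and to deduce uniqueness. Writing $Q=C^TC$, one computes $\bigl\langle\bigl(\textstyle\sum_{k=0}^{d-1}L_{\mathsf{A}}^kQ\bigr)u,u\bigr\rangle=\sum_{k=0}^{d-1}\sum_{i_1,\ldots,i_k=1}^M\|C(A_{i_1}\cdots A_{i_k})u\|^2$, which vanishes at a given $u$ only if $(A_{i_1}\cdots A_{i_k})u\in\ker C$ for every product of length $<d$; but by the span argument in the proof of Lemma~\ref{le:ire} the vectors $(A_{i_1}\cdots A_{i_k})u$ with $k<d$ span $\mathbb{R}^d$ for every nonzero $u$, while $\ker C$ is a proper subspace since $C\neq0$, so $u=0$. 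Hence $\sum_{k=0}^{d-1}L_{\mathsf{A}}^kQ$ is positive definite, and as it equals $\bigl(\sum_{k=0}^{d-1}e^{kP(\mathsf{A},2)}\bigr)Q$ we get $Q\succ0$; applying the same reasoning to the (also irreducible) transpose tuple gives $\hat Q\succ0$. For uniqueness, if $Q'\in\mathcal{P}\setminus\{0\}$ is any eigenvector of $L_{\mathsf{A}}$, then comparing $\langle L_{\mathsf{A}}Q',\hat Q\rangle$ with $\langle Q',\hat L_{\mathsf{A}}\hat Q\rangle$ and using $\mathrm{tr}(Q'\hat Q)>0$ forces its eigenvalue to be $e^{P(\mathsf{A},2)}$, and then the standard comparison argument with $t_0:=\sup\{t:Q'-tQ\succeq0\}$ (the matrix $Q'-t_0Q$ being an eigenvector in $\partial\mathcal{P}$, hence $0$) shows $Q'$ is a scalar multiple of $Q$; similarly for $\hat Q$. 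Finally, since $Q\succ0$ and $\hat Q\succeq0$ is nonzero we have $\mathrm{tr}(Q\hat Q)>0$, so after rescaling we may take $\mathrm{tr}(Q\hat Q)=1$, and the proposition follows.

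The step I expect to be the main obstacle is precisely the one using irreducibility: promoting the a priori merely positive semidefinite Krein--Rutman eigenvector to a positive definite one and deducing that the positive eigendirection is unique. This is where the dimension-counting span argument from Lemma~\ref{le:ire} is needed, and it has to be run both for $L_{\mathsf{A}}$ and for its adjoint $\hat L_{\mathsf{A}}$; one also has to be a little careful to confirm along the way that every eigenvector lying in the cone has eigenvalue equal to $e^{P(\mathsf{A},2)}$, which is what the pairing against $\hat Q$ (or $Q$) achieves. By contrast, the spectral-radius identities \eqref{eq:prids} are routine once the iterates of these operators are written as sums over words.
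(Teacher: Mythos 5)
Your proof is correct and follows essentially the same route as the paper: Perron--Frobenius theory on the cone of positive semidefinite matrices, with irreducibility entering through the length-$<d$ span argument of Lemma~\ref{le:ire} to force positive definiteness of the eigenmatrices, and the pressure identity obtained from Gelfand's formula together with the comparison of trace and operator norm applied to $L_{\mathsf{A}}^n\mathrm{Id}$ (resp.\ $\hat L_{\mathsf{A}}^n\mathrm{Id}$). The differences are minor: you get $\rho(L_{\mathsf{A}})=\rho(\hat L_{\mathsf{A}})$ from adjointness under the trace pairing and prove uniqueness by hand via the $t_0$-comparison argument, where the paper passes to the (irreducible) transpose tuple --- a one-line fact you assert only parenthetically --- and cites a cone Perron--Frobenius theorem; and you deduce $\rho(\sum_i A_i^{\otimes 2})\geq\rho(\hat L_{\mathsf{A}})$ from the invariant subspace $\mathrm{vec}(M_d^S(\mathbb{R}))$ rather than from the eigenvector $Q$ as the paper does.
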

{\bf{Remark.}} 
Proposition \ref{pr:ku} summarises results of Kusuoka, K.-S. Lau, J. Wang and Yu. V. Protasov. The existence of the eigenmatrices \eqref{eq:kuid} was proved by Kusuoka in \cite{Ku89}. The identity of the eigenvalue $\rho(L_{\mathsf{A}})$ with the pressure $e^{P(\mathsf{A},2)}$ does not seem to have been previously noticed in the literature on Kusuoka measures, nor in the literature on matrix equilibrium states. This identity was however noted by Protasov \cite{Pr97} who attributed it to Lau and Wang \cite{LaWa95b}. Formul{\ae} involving expressions of the form $\rho(\sum_{i=1}^M A_i^{\otimes k})$ have become a staple of joint spectral radius research (see e.g. \cite{BlNe05,Pr10}) but we have found it surprisingly difficult to find a proof of the precise statement \eqref{eq:prids} in the literature. It being relatively economical to simply prove the above statements all at once ourselves, we therefore take the liberty of presenting the proofs of these results in the above expository proposition.
\begin{proof}
Ignoring for the moment the normalisation condition $\mathrm{tr}\,Q\hat{Q}=1$ which will be treated at the end of the proof, we note that it is sufficient to prove only those statements concerned with $L_{\mathsf{A}}$. Let $\mathsf{A}^T:=(A_1^T,\ldots,A_M^T)$. Clearly $\hat{L}_{\mathsf{A}}=L_{\mathsf{A}^T}$ and $P(\mathsf{A},2)=P(\mathsf{A}^T,2)$, so the claimed results for $\hat{L}_{\mathsf{A}}$ will follow from those for $L_{\mathsf{A}}$ if it is shown that $\mathsf{A}^T$ is also irreducible. To prove this we note that $\mathsf{A}$ has a nontrivial invariant proper subspace if and only if there exist nonzero $u,v \in \mathbb{R}^d$ such that $\langle A_iu,v\rangle=0$ for every $i=1,\ldots,M$, and this property clearly holds for $\mathsf{A}$ iff it holds for $\mathsf{A}^T$.

Let us therefore prove the asserted properties of $L_{\mathsf{A}}$. Let $\mathcal{K}\subset M_d^S(\mathbb{R})$ denote the cone of positive semidefinite matrices. We assert that $\mathcal{L}:=\sum_{k=0}^{d-1}L_{\mathsf{A}}^k$ maps $\mathcal{K}\setminus \{0\}$ into the interior of $\mathcal{K}$. To see this let $B \in \mathcal{K}$ and $u \in \mathbb{R}^d$ be nonzero: we must show that $\langle \mathcal{L}Bu,u\rangle>0$, which implies that $\mathcal{L}B$ is positive definite. Let $B=U^TU$ and let $V \in M_d(\mathbb{R})$ be a matrix which fixes $u$ and maps the orthogonal complement of $u$ to zero. We may write
\begin{align*}\langle \mathcal{L}Bu,u\rangle &= \sum_{k=0}^{d-1}\sum_{i_1,\ldots,i_k=1}^M \langle (A_{i_k}\cdots A_{i_1})^TU^TUA_{i_k}\cdots A_{i_1}u,u\rangle\\
&=\sum_{k=0}^{d-1}\sum_{i_1,\ldots,i_k=1}^M \langle UA_{i_k}\cdots A_{i_1}u,UA_{i_k}\cdots A_{i_1}u\rangle\\
&=\sum_{k=0}^{d-1}\sum_{i_1,\ldots,i_k=1}^M \left\|UA_{i_k}\cdots A_{i_1}u\right\|^2
=\sum_{k=0}^{d-1}\sum_{i_1,\ldots,i_k=1}^M \left\|UA_{i_k}\cdots A_{i_1}V\right\|^2>0\end{align*}
by Lemma \ref{le:ire}, and therefore $\mathcal{L}(\mathcal{K}\setminus \{0\})$ is interior to $\mathcal{K}$ as claimed. By a suitable version of the Perron-Frobenius Theorem (for example, the combination of Theorems 3.20 and 3.23 in \cite{BePl94}) it follows that $\rho(L_{\mathsf{A}})$ is nonzero and is an eigenvalue of $L_{\mathsf{A}}$, and that up to scalar multiplication there exists a unique positive definite matrix $Q$ such that $L_{\mathsf{A}}Q=\rho(L_{\mathsf{A}})Q$.

To establish \eqref{eq:prids} we will prove $\rho(L_{\mathsf{A}})\leq \rho(\sum_{i=1}^M A_i^{\otimes 2})\leq e^{P(\mathsf{A},2)}  \leq \rho(L_{\mathsf{A}})$. For the first of these 
we consider the linear map $\mathrm{vec}\colon M_d(\mathbb{R}) \to \mathbb{R}^{d^2}$ obtained by defining the first $d$ entries of $\mathrm{vec}(B)$ to be  the first column of $B$, the second $d$ entries of $\mathrm{vec}(B)$ to be the second column of $B$,  and so forth. We have $\mathrm{vec}(B_1B_2B_3)=(B_3^T\otimes B_1)\mathrm{vec}(B_2)$ for every $B_1,B_2,B_3 \in M_d(\mathbb{R})$ (see e.g. \cite[Lemma 4.3.1]{HoJo94}) and since $L_{\mathsf{A}}Q=\rho(L_{\mathsf{A}})Q$,
\[\left(\sum_{i=1}^M A_i^T\otimes A_i^T\right)\mathrm{vec}(Q)=\rho(L_{\mathsf{A}})\mathrm{vec}(Q).\]
It follows that $\rho(L_{\mathsf{A}})$ is an eigenvalue of $\sum_{i=1}^M (A_i^T)^{\otimes 2}$ and hence of $\sum_{i=1}^M A_i^{\otimes 2}$ which implies the first inequality. For the second inequality we note simply that
\begin{align*}\rho\left(\sum_{i=1}^MA_i^{\otimes 2}\right)&=\lim_{n\to\infty}\left\|\left(\sum_{i=1}^MA_i^{\otimes 2}\right)^n\right\|^{\frac{1}{n}}= \lim_{n\to\infty}\left\|\sum_{i_1,\ldots,i_n=1}^MA_{i_n}^{\otimes 2}\cdots A_{i_1}^{\otimes 2}\right\|^{\frac{1}{n}}\\
&\leq  \lim_{n\to\infty}\left(\sum_{i_1,\ldots,i_n=1}^M\left\|A_{i_n}\cdots A_{i_1}\right\|^2\right)^{\frac{1}{n}}=e^{P(\mathsf{A},2)}\end{align*}
using Gelfand's formula. For the third inequality, we observe that for every positive semidefinite $B \in M_d(\mathbb{R})$ we have $\|B\|^2:=\rho(B^TB)=\rho(B^2)=\rho(B)^2$ and therefore 
\begin{equation}\label{eq:tr}\|B\|=\rho(B)\geq \frac{1}{d}\mathrm{tr}\,B \geq \frac{1}{d}\rho(B)=\frac{1}{d}\|B\|\end{equation}
since all of the eigenvalues of $B$ are non-negative. Since the identity matrix $\mathrm{Id} \in M_d(\mathbb{R})$ is positive definite, Gelfand's formula for $\rho(L_{\mathsf{A}})$ implies
\begin{align*}\rho(L_{\mathsf{A}})\geq \lim_{n \to \infty} \left\|L_{\mathsf{A}}^n {\mathrm{Id}}\right\|^{\frac{1}{n}} &= \lim_{n\to\infty} \left\|\sum_{i_1,\ldots,i_n=1}^M (A_{i_n}\cdots A_{i_1})^TA_{i_n}\cdots A_{i_1}\right\|^{\frac{1}{n}} \\
&= \lim_{n\to\infty} \left(\mathrm{tr}\,\sum_{i_1,\ldots,i_n=1}^M (A_{i_n}\cdots A_{i_1})^TA_{i_n}\cdots A_{i_1}\right)^{\frac{1}{n}} \\
&= \lim_{n\to\infty} \left(\sum_{i_1,\ldots,i_n=1}^M \mathrm{tr}\,(A_{i_n}\cdots A_{i_1})^TA_{i_n}\cdots A_{i_1}\right)^{\frac{1}{n}} \\
&= \lim_{n\to\infty} \left(\sum_{i_1,\ldots,i_n=1}^M \|A_{i_n}\cdots A_{i_1}\|^2\right)^{\frac{1}{n}} =e^{P(\mathsf{A},2)}\end{align*}
by repeated application of \eqref{eq:tr}, and this completes the proof of \eqref{eq:prids}. The existence of solutions $Q,\hat{Q}$ to the equation \eqref{eq:kuid} having been shown, we note that $\mathrm{tr}\,Q\hat{Q}$ is positive: writing $Q=U^TU$ yields
\begin{equation}\label{eq:nonneg}\mathrm{tr}(Q\hat{Q})=\mathrm{tr}(U\hat{Q}U^T)=\sum_{k=1}^d \langle U\hat{Q}U^Te_k,e_k\rangle=\sum_{k=1}^d \langle \hat{Q}U^Te_k,U^Te_k\rangle >0\end{equation}
since $\hat{Q}$ is positive definite. It follows that by replacing $Q$ with $(\mathrm{tr}(Q\hat{Q}))^{-1}Q$ if necessary we may obtain the desired normalisation $\mathrm{tr}(Q\hat{Q})=1$.
\end{proof}

We may now recall the definition of the \emph{Kusuoka measure} $\mu$ associated to the matrices $(A_1,\ldots,A_M)$: if $Q,\hat{Q}$ are the matrices provided by Proposition \ref{pr:ku}, then we may define the measure $\mu$ on cylinders of $\Sigma_M$ by 
\[\mu\left(\left[x_1\cdots x_n\right]\right):=e^{-nP(\mathsf{A},2)}\mathrm{tr}\,\left(\hat{Q}(A_{x_n}\cdots A_{x_1})^TQA_{x_n}\cdots A_{x_1}\right)\]
for every $x_1,\ldots,x_n \in \{1,\ldots,M\}$. Since this is the trace of the product of the positive definite matrix $\hat{Q}$ with a positive semidefinite matrix of the form $B^TQB$, it follows as in \eqref{eq:nonneg} that it is non-negative. We note that
\[\sum_{k=1}^M \mu\left(\left[kx_2\cdots x_n\right]\right)=\mu\left(\left[x_2\cdots x_n\right]\right), \]
\[\sum_{k=1}^M \mu\left(\left[x_1\cdots x_{n-1}k\right]\right)=\mu\left(\left[x_1\cdots x_{n-1}\right]\right) \]
using the equations \eqref{eq:kuid}, and $\mu(\sigma_M)=1$ by the normalisation condition $\mathrm{tr}(Q\hat{Q})=1$. It follows that $\mu$ describes a $\sigma$-invariant probability measure on the ring of finite unions of cylinder sets and hence defines a $\sigma$-invariant probability measure on $\Sigma_M$. The purpose of this appendix is to note the following:
\begin{theorem}
Let $\mathsf{A}=(A_1,\ldots,A_M) \in M_d(\mathbb{R})^M$ be irreducible, where $M \geq 2$, $d \geq 1$. Then the Kusuoka measure associated to $\mathsf{A}$ is precisely the equilibrium state of $(\mathsf{A},2)$.
\end{theorem}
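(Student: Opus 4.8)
The strategy is to verify that the Kusuoka measure $\mu$ satisfies a two-sided Gibbs inequality of the type \eqref{eq:Gibbs} with $s=2$, and then to identify it with the equilibrium state of $(\mathsf{A},2)$ using the uniqueness clause of Theorem \ref{th:Gibbs}. Writing $B:=A_{x_n}\cdots A_{x_1}$, the definition of $\mu$ gives $\mu([x_1\cdots x_n])=e^{-nP(\mathsf{A},2)}\mathrm{tr}(\hat{Q}B^TQB)$, so the whole matter reduces to the linear-algebraic statement that there is a constant $C>0$, depending only on $d$, $Q$ and $\hat{Q}$, with
\[C^{-1}\|B\|^2\leq \mathrm{tr}\bigl(\hat{Q}B^TQB\bigr)\leq C\|B\|^2\qquad\text{for every }B\in M_d(\mathbb{R}).\]
Crucially $C$ must not depend on $B$, and in particular not on the length $n$; this is automatic once one sees that $C$ is built only from the fixed eigenvalues of the positive definite matrices $Q$ and $\hat{Q}$ supplied by Proposition \ref{pr:ku}.

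The main step is to establish this comparison. For any positive definite $P$ and positive semidefinite $N$ one has $\lambda_{\min}(P)\,\mathrm{tr}(N)\leq\mathrm{tr}(PN)\leq\lambda_{\max}(P)\,\mathrm{tr}(N)$, where $\lambda_{\min},\lambda_{\max}$ denote least and greatest eigenvalues. Applying this twice — first with $P=\hat{Q}$ and $N=B^TQB$, then, after the cyclic rearrangement $\mathrm{tr}(B^TQB)=\mathrm{tr}(Q\,BB^T)$, with $P=Q$ and $N=BB^T$ — one obtains that $\mathrm{tr}(\hat{Q}B^TQB)$ is comparable, up to constants depending only on $Q$ and $\hat{Q}$, to $\mathrm{tr}(BB^T)=\sum_{i=1}^d\alpha_i(B)^2$; and $\|B\|^2=\alpha_1(B)^2\leq\sum_i\alpha_i(B)^2\leq d\,\|B\|^2$ finishes the displayed estimate. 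Consequently $\mu([x_1\cdots x_n])$ is comparable to $e^{-nP(\mathsf{A},2)}\|A_{x_n}\cdots A_{x_1}\|^2$ uniformly in the cylinder. When $d\geq 2$ Theorem \ref{th:Gibbs} applies: the equilibrium state $\nu$ of $(\mathsf{A},2)$ satisfies the very same estimate \eqref{eq:Gibbs}, so $\mu$ and $\nu$ agree on cylinders up to a uniform multiplicative constant and are therefore mutually absolutely continuous; since $\nu$ is ergodic (as used in the proof of Theorem \ref{th:mix}) and $\mu$ is $\sigma$-invariant with $\mu\ll\nu$, we conclude $\mu=\nu$.

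It remains to dispose of the case $d=1$, for which Theorem \ref{th:Gibbs} was not stated. Here $Q$ and $\hat{Q}$ are positive reals with $Q\hat{Q}=1$, so $\mu([x_1\cdots x_n])=e^{-nP(\mathsf{A},2)}(A_{x_n}\cdots A_{x_1})^2$; this is exactly the Bernoulli measure with $\mu([k])=A_k^2/\sum_{j=1}^M A_j^2$, and a one-line computation of $h(\mu)+2\Lambda(\mathsf{A},\mu)$ shows it attains the supremum of Theorem \ref{th:subvar}, hence is the equilibrium state of $(\mathsf{A},2)$. Overall the argument is essentially routine; the only point demanding any care is the uniformity of the comparison constant $C$ over all finite products $B$, which holds precisely because $C$ depends only on $d$, $Q$ and $\hat{Q}$ and not on $B$.
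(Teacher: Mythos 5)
Your proposal is correct and follows essentially the same route as the paper: both establish that $\mu([x_1\cdots x_n])$ is dominated by a constant times $e^{-nP(\mathsf{A},2)}\|A_{x_n}\cdots A_{x_1}\|^2$ (the paper via the Cholesky factorisation $Q=U^TU$, $\hat{Q}=\hat{U}^T\hat{U}$ and the Frobenius norm, you via eigenvalue bounds on traces), and then conclude $\mu=\nu$ from absolute continuity with respect to the ergodic equilibrium state supplied by Theorem \ref{th:Gibbs}. Your two-sided estimate and the separate treatment of $d=1$ (which Theorem \ref{th:Gibbs} formally excludes) are harmless extras; the paper contents itself with the one-sided bound, which already gives $\mu\ll\nu$.
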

\begin{proof}
Let $Q$, $\hat{Q}$ be as given by Proposition \ref{pr:ku} and write $Q=U^TU$, $Q=\hat{U}^T\hat{U}$ where $U,\hat{U}$ are real upper triangular matrices with positive diagonal entries. Let $\mu$ denote the Kusuoka measure associated to $\mathsf{A}$ and let $\nu$ denote the equilibrium state of $(\mathsf{A},2)$ constructed by Theorem \ref{th:Gibbs}. Recall that the \emph{Frobenius norm} of a matrix $B \in M_d(\mathbb{R})$ is given by $\|B\|_F^2:=\mathrm{tr}\,B^TB$ and satisfies $\|B\|\leq \|B\|_F \leq \sqrt{d}\cdot \|B\|$. For every $x_1,\ldots,x_n \in \{1,\ldots,M\}$ we have
\begin{align*}\mathrm{tr}\,\left(\hat{Q}(A_{x_n}\cdots A_{x_1})^TQA_{x_n}\cdots A_{x_1}\right) &= \mathrm{tr}\,\left(\hat{U}^T\hat{U}(A_{x_n}\cdots A_{x_1})^TU^TUA_{x_n}\cdots A_{x_1}\right)\\
&=\left\|UA_{x_n}\cdots A_{x_1}\hat{U}^T\right\|_F^2\end{align*}
so that by the Gibbs inequality \eqref{eq:Gibbs}
\begin{align*}\frac{\mu([x_1\cdots x_n])}{e^{-nP(\mathsf{A},2)}} =\left\|UA_{x_n}\cdots A_{x_1}\hat{U}^T\right\|_F^2 &\leq d\|U\|^2 \|\hat{U}\|^2 \|A_{x_n}\cdots A_{x_1}\|^2\\
&\leq \frac{Cd\|U\|^2\|\hat{U}\|^2 \nu([x_1\cdots x_n])}{e^{-nP(\mathsf{A},2)}}.\end{align*}
It follows that the invariant probability measure $\mu$ is absolutely continuous with respect to the ergodic probability measure $\nu$ and therefore $\mu=\nu$ as claimed.
\end{proof}
We therefore note that all of the results concerning matrix equilibrium states in this article also hold true for Kusuoka measures.

The mixing properties of Kusuoka measures have recently been investigated by Johansson, {\"O}berg and Pollicott, who showed in \cite{JoObPo15} that if the $M$-tuple of linear transformations of $M_d^S(\mathbb{R})$ defined by $B \mapsto A_i^TBA_i$ is irreducible then the Kusuoka measure associated to $\mathsf{A}$ is mixing, and moreover satisfies a quantitative mixing estimate of the form
\[\left|\mu\left([x_1\cdots x_m] \cap \sigma^{-n}[y_1\cdots y_m]\right) -\mu\left([x_1\cdots x_m]\right)\mu\left([y_1\cdots y_m]\right) \right|=O(e^{-\gamma n})\]
where $\gamma>0$ depends only on $\mathsf{A}$. It is a simple matter to modify Proposition \ref{pr:notmix} to produce examples of Kusuoka measures  not satisfying the irreducibility condition which are not mixing. For example, if $\alpha_1,\alpha_2>0$ are unequal and satisfy $\alpha_1^2+\alpha_2^2=1$ then the matrices
\[A_1:=\left(\begin{array}{cc}0&\alpha_2 \\ \alpha_1 &0\end{array} \right),\qquad A_2:=\left(\begin{array}{cc}0&\alpha_1\\ \alpha_2&0\end{array} \right)\]
form an irreducible pair and satisfy
\[\sum_{i=1}^2 A_i^T\mathrm{Id}A_i =\sum_{i=1}^2 A_i\mathrm{Id}A_i^T = \mathrm{Id},\]
and so the Kusuoka measure associated to $\mathsf{A}:=(A_1,A_2)$ is given simply by
\[\mu([x_1\cdots x_n])=\frac{1}{2}\mathrm{tr}\left((A_{x_n}\cdots A_{x_1})^TA_{x_n}\cdots A_{x_1}\right)\]
and it may be shown by direct calculation that $\mu$ is the barycentre of two distinct $\sigma^2$-invariant Bernoulli measures. We omit the details.

\bibliographystyle{acm}
\bibliography{pradjrad}

\begin{thebibliography}{10}

\bibitem{BaRa15}
{\sc B\'ar\'any, B., and Rams, M.}
\newblock Dimension maximizing measures for self-affine systems.
\newblock arXiv preprint 1507.02829, 2015.

\bibitem{BeHoSt14}
{\sc Bell, R., Ho, C.-W., and Strichartz, R.~S.}
\newblock Energy measures of harmonic functions on the {S}ierpi\'nski gasket.
\newblock {\em Indiana Univ. Math. J. 63}, 3 (2014), 831--868.

\bibitem{BeWa92}
{\sc Berger, M.~A., and Wang, Y.}
\newblock Bounded semigroups of matrices.
\newblock {\em Linear Algebra Appl. 166\/} (1992), 21--27.

\bibitem{BePl94}
{\sc Berman, A., and Plemmons, R.~J.}
\newblock {\em Nonnegative matrices in the mathematical sciences}, vol.~9 of
  {\em Classics in Applied Mathematics}.
\newblock Society for Industrial and Applied Mathematics (SIAM), Philadelphia,
  PA, 1994.
\newblock Revised reprint of the 1979 original.

\bibitem{BlNe05}
{\sc Blondel, V.~D., and Nesterov, Y.}
\newblock Computationally efficient approximations of the joint spectral
  radius.
\newblock {\em SIAM J. Matrix Anal. Appl. 27}, 1 (2005), 256--272 (electronic).

\bibitem{Bo03}
{\sc Bochi, J.}
\newblock Inequalities for numerical invariants of sets of matrices.
\newblock {\em Linear Algebra Appl. 368\/} (2003), 71--81.

\bibitem{Bo74}
{\sc Bowen, R.}
\newblock Bernoulli equilibrium states for {A}xiom {A} diffeomorphisms.
\newblock {\em Math. Systems Theory 8}, 4 (1974/75), 289--294.

\bibitem{CaFeHu08}
{\sc Cao, Y.-L., Feng, D.-J., and Huang, W.}
\newblock The thermodynamic formalism for sub-additive potentials.
\newblock {\em Discrete Contin. Dyn. Syst. 20}, 3 (2008), 639--657.

\bibitem{CoQu98}
{\sc Coelho, Z., and Quas, A.~N.}
\newblock Criteria for {$\overline d$}-continuity.
\newblock {\em Trans. Amer. Math. Soc. 350}, 8 (1998), 3257--3268.

\bibitem{El95}
{\sc Elsner, L.}
\newblock The generalized spectral-radius theorem: an analytic-geometric proof.
\newblock In {\em Proceedings of the {W}orkshop ``{N}onnegative {M}atrices,
  {A}pplications and {G}eneralizations'' and the {E}ighth {H}aifa {M}atrix
  {T}heory {C}onference ({H}aifa, 1993)\/} (1995), vol.~220, pp.~151--159.

\bibitem{FaKe15}
{\sc Falconer, K., and Kempton, T.}
\newblock Planar self-affine sets with equal {H}ausdorff, box and affinity
  dimensions.
\newblock arXiv:1503.01270, 2015.

\bibitem{Fe03}
{\sc Feng, D.-J.}
\newblock Lyapunov exponents for products of matrices and multifractal
  analysis. {I}. {P}ositive matrices.
\newblock {\em Israel J. Math. 138\/} (2003), 353--376.

\bibitem{Fe09}
{\sc Feng, D.-J.}
\newblock Lyapunov exponents for products of matrices and multifractal
  analysis. {II}. {G}eneral matrices.
\newblock {\em Israel J. Math. 170\/} (2009), 355--394.

\bibitem{FeKa11}
{\sc Feng, D.-J., and K{\"a}enm{\"a}ki, A.}
\newblock Equilibrium states of the pressure function for products of matrices.
\newblock {\em Discrete Contin. Dyn. Syst. 30}, 3 (2011), 699--708.

\bibitem{FeLa02}
{\sc Feng, D.-J., and Lau, K.-S.}
\newblock The pressure function for products of non-negative matrices.
\newblock {\em Math. Res. Lett. 9}, 2-3 (2002), 363--378.

\bibitem{FeSh14}
{\sc Feng, D.-J., and Shmerkin, P.}
\newblock Non-conformal repellers and the continuity of pressure for matrix
  cocycles.
\newblock {\em Geom. Funct. Anal. 24}, 4 (2014), 1101--1128.

\bibitem{Gl03}
{\sc Glasner, E.}
\newblock {\em Ergodic theory via joinings}, vol.~101 of {\em Mathematical
  Surveys and Monographs}.
\newblock American Mathematical Society, Providence, RI, 2003.

\bibitem{Gu96}
{\sc Gurvits, L.}
\newblock Stability of linear inclusions -- part 2.
\newblock {NECI} technical report TR, pp.96--173, 1996.

\bibitem{HoJo94}
{\sc Horn, R.~A., and Johnson, C.~R.}
\newblock {\em Topics in matrix analysis}.
\newblock Cambridge University Press, Cambridge, 1994.
\newblock Corrected reprint of the 1991 original.

\bibitem{JoObPo15}
{\sc Johansson, A., {\"O}berg, A., and Pollicott, M.}
\newblock Ergodic theory of {K}usuoka measures.
\newblock {\em J. Fractal Geom.\/}.
\newblock To appear.

\bibitem{Ju09}
{\sc Jungers, R.}
\newblock {\em The joint spectral radius}, vol.~385 of {\em Lecture Notes in
  Control and Information Sciences}.
\newblock Springer-Verlag, Berlin, 2009.
\newblock Theory and applications.

\bibitem{Ka04}
{\sc K{\"a}enm{\"a}ki, A.}
\newblock On natural invariant measures on generalised iterated function
  systems.
\newblock {\em Ann. Acad. Sci. Fenn. Math. 29}, 2 (2004), 419--458.

\bibitem{KaRe14}
{\sc K{\"a}enm{\"a}ki, A., and Reeve, H. W.~J.}
\newblock Multifractal analysis of {B}irkhoff averages for typical infinitely
  generated self-affine sets.
\newblock {\em J. Fractal Geom. 1}, 1 (2014), 83--152.

\bibitem{Ku89}
{\sc Kusuoka, S.}
\newblock Dirichlet forms on fractals and products of random matrices.
\newblock {\em Publ. Res. Inst. Math. Sci. 25}, 4 (1989), 659--680.

\bibitem{LaWa95}
{\sc Lagarias, J.~C., and Wang, Y.}
\newblock The finiteness conjecture for the generalized spectral radius of a
  set of matrices.
\newblock {\em Linear Algebra Appl. 214\/} (1995), 17--42.

\bibitem{LaWa95b}
{\sc Lau, K.-S., and Wang, J.}
\newblock Characterization of {$L^p$}-solutions for the two-scale dilation
  equations.
\newblock {\em SIAM J. Math. Anal. 26}, 4 (1995), 1018--1046.

\bibitem{Mo10}
{\sc Morris, I.~D.}
\newblock Criteria for the stability of the finiteness property and for the
  uniqueness of {B}arabanov norms.
\newblock {\em Linear Algebra Appl. 433}, 7 (2010), 1301--1311.

\bibitem{Mo12}
{\sc Morris, I.~D.}
\newblock The generalised {B}erger-{W}ang formula and the spectral radius of
  linear cocycles.
\newblock {\em J. Funct. Anal. 262}, 3 (2012), 811--824.

\bibitem{Mo13}
{\sc Morris, I.~D.}
\newblock Mather sets for sequences of matrices and applications to the study
  of joint spectral radii.
\newblock {\em Proc. Lond. Math. Soc. (3) 107}, 1 (2013), 121--150.

\bibitem{Mo15}
{\sc Morris, I.~D.}
\newblock An inequality for the matrix pressure function and applications.
\newblock {\em Adv. Math. 302\/} (2016), 280--308.

\bibitem{MoSh16}
{\sc Morris, I.~D., and Shmerkin, P.}
\newblock On equality of {H}ausdorff and affinity dimensions, via self-affine
  measures on positive subsystems.
\newblock arXiv preprint 1602.08789, 2016.

\bibitem{OmRa97}
{\sc Omladi{\v{c}}, M., and Radjavi, H.}
\newblock Irreducible semigroups with multiplicative spectral radius.
\newblock {\em Linear Algebra Appl. 251\/} (1997), 59--72.

\bibitem{Or72}
{\sc Ornstein, D.~S.}
\newblock On the root problem in ergodic theory.
\newblock In {\em Proceedings of the {S}ixth {B}erkeley {S}ymposium on
  {M}athematical {S}tatistics and {P}robability ({U}niv. {C}alifornia,
  {B}erkeley, {C}alif., 1970/1971), {V}ol. {II}: {P}robability theory\/}
  (1972), Univ. California Press, Berkeley, Calif., pp.~347--356.

\bibitem{PaPo90}
{\sc Parry, W., and Pollicott, M.}
\newblock Zeta functions and the periodic orbit structure of hyperbolic
  dynamics.
\newblock {\em Ast\'erisque}, 187-188 (1990), 268.

\bibitem{Pr97}
{\sc Protasov, V.~Y.}
\newblock The generalized joint spectral radius: a geometric approach.
\newblock {\em Izv. Ross. Akad. Nauk Ser. Mat. 61}, 5 (1997), 99--136.

\bibitem{Pr10}
{\sc Protasov, V.~Y.}
\newblock When do several linear operators share an invariant cone?
\newblock {\em Linear Algebra Appl. 433}, 4 (2010), 781--789.

\bibitem{PrVo14}
{\sc Protasov, V.~Y., and Voynov, A.~S.}
\newblock Matrix semigroups with constant spectral radius.
\newblock arXiv preprint 1407.6568, 2014.

\bibitem{Ro03}
{\sc Rota, G.-C.}
\newblock {\em Gian-{C}arlo {R}ota on analysis and probability}.
\newblock Contemporary Mathematicians. Birkh\"auser Boston, Inc., Boston, MA,
  2003.
\newblock Selected papers and commentaries, Edited by Jean Dhombres, Joseph P.
  S. Kung and Norton Starr.

\bibitem{RoSt60}
{\sc Rota, G.-C., and Strang, G.}
\newblock A note on the joint spectral radius.
\newblock {\em Nederl. Akad. Wetensch. Proc. Ser. A 63 = Indag. Math. 22\/}
  (1960), 379--381.

\bibitem{Sh96}
{\sc Shields, P.~C.}
\newblock {\em The ergodic theory of discrete sample paths}, vol.~13 of {\em
  Graduate Studies in Mathematics}.
\newblock American Mathematical Society, Providence, RI, 1996.

\bibitem{StTs10}
{\sc Strichartz, R.~S., and Tse, S.~T.}
\newblock Local behavior of smooth functions for the energy {L}aplacian on the
  {S}ierpinski gasket.
\newblock {\em Analysis (Munich) 30}, 3 (2010), 285--299.

\bibitem{Wi02}
{\sc Wirth, F.}
\newblock The generalized spectral radius and extremal norms.
\newblock {\em Linear Algebra Appl. 342\/} (2002), 17--40.

\end{thebibliography}

\end{document}